\newcommand{\be}{\begin{eqnarray*}}
\newcommand{\en}{\end{eqnarray*}}
\newcommand{\bes}{\begin{eqnarray}}
\newcommand{\ens}{\end{eqnarray}}
\def\nn{\nonumber}
\newcommand{\al}{\alpha}
\newcommand{\la}{\lambda}
\newcommand{\ep}{\epsilon}
\newcommand{\N}{\mathbb{N} }
\newcommand{\R}{\mathbb{R}  }
\newcommand{\usf}{\mathsf{u}}
\newtheorem{theorem}{Theorem}[section]
\newtheorem{definition}{Definition}[section]
\newtheorem{lemma}{Lemma}[section]
\newtheorem{remark}{Remark}[section]
\def\bq{\begin{equation}}
\def\eq{\end{equation}}
\def\bqq{\begin{eqnarray*}}
\def\eqq{\end{eqnarray*}}
\def\nn{\nonumber}
\newcommand{\ve}{\mathcal{U}}
\newcommand{\Ss}{\mathscr{S}}
\newcommand{\HL}{ \mbox{ \raisebox{7.2pt} {\tiny$\circ$} \kern-10.7pt} {H_L^1} }
\newcommand{\Wp}{ \mbox{ \raisebox{7.7pt} {\scriptsize$\circ$} \kern-10.1pt} {W^{1,p}} }
\newcommand{\Wpp}{ \mbox{ \raisebox{7.7pt} {\scriptsize$\circ$} \kern-10.1pt} {W^{1,p'}} }
\newcommand{\Sz}{ \mbox{ \raisebox{7.5pt} {\scriptsize$\circ$} \kern-10.1pt} {\Ss} }
\newcommand{\HLnew}{ \mbox{ \raisebox{7pt} {\scriptsize$\circ$} \kern-10.1pt}{H}^1_L }
\newcommand{\HLn}{{\mbox{\,\raisebox{4.7pt} {\tiny$\circ$} \kern-9.3pt}{H}^{1}_{L}  }}
\newcommand{\HLs}{{\mbox{\raisebox{8.7pt} {\scriptsize$\circ$} \kern-10.1pt}{H}^1_L  }}
\DeclareMathOperator*{\tr}{tr_\Omega}
\newcommand{\boxednumber}[1]{\expandafter\readdigit\the\numexpr#1\relax\relax}
\newcommand{\Hsd}{\mathbb{H}^{-s}(\Omega)}
\newcommand{\diff}{\, \mbox{\rm d}}
\newcommand{\Hs}{\mathbb{H}^s(\Omega)}
\newcommand{\Nin}{\,{\mbox{\,\raisebox{6.0pt} {\tiny$\circ$} \kern-10.9pt}\N }}
\newcommand{\calL}{{\mathcal L}}
\title{\bf Continuity of solutions of a class of  fractional equations}
\author{ Duc Trong Dang$^1$, Erkan Nane $^2$  \footnote{Corresponding author: \url{ezn0001@auburn.edu }}, Dang Minh Nguyen$^1$ and  Nguyen Huy Tuan$^1$, \\\\
\small $^1$Fact. Maths and Computer Science, University of Science,\\
\small Vietnam National University, 227 Nguyen Van Cu, Dist.5, HoChiMinh City, VietNam.\\
	\small $^2$ Department of Mathematics and Statistics, Auburn University, Auburn, AL 36849
}
\begin{document}
\date{}
\maketitle
\begin{abstract}
In practice many problems related to space/time fractional equations depend on   fractional parameters. But these fractional parameters are not known a priori in modelling problems. Hence continuity of the solutions with respect to these parameters is important for modelling purposes.
 In this paper we will study the continuity of the solutions of a class of equations including the Abel equations of the first and second kind, and  time  fractional diffusion type equations. We consider continuity with respect to the fractional parameters  as well as the initial value.
\end{abstract}

Keywords: Space-time-fractional partial differential equations;  Caputo derivatives; Abel equation of the first kind; Abel equation of the second kind; time fractional diffusion in Banach spaces

\tableofcontents

\section{Introduction}

Diffusion is one of the most important transport mechanisms found in nature. At a microscopic level, the diffusion is caused by random motion of individual particles. In fact, let $x(t)$, $t>0$, be the  displacement of a particle at time $t$. From the theory of random walk (see, e.g., \cite{Metzler-Klafter}) the mean squared displacement grows  as
$$\mathbb{E}(x^2(t))=\langle x^2(t)\rangle\sim t^\alpha,$$
where the constant $\alpha>0$ can be called the {\it order of diffusion}.
Classical model of diffusion corresponds to the case $\alpha=1$. In this case, the random  walk limit  is modeled by a   Brownian motion  with the corresponding Laplacian operator. The corresponding model for  $\alpha\not=1$ is called {\it anomalous diffusion}. A growing number of studies on the diffusion phenomena have shown the prevalence of anomalous diffusion in which the mean square variance grows faster (in the case of superdiffusion, i.e., $\alpha>1$) or slower (in the case of subdiffusion, i.e., $0\leq \alpha<1$) than the one for   diffusion processes. From the mathematical point of  view, to model the anomalous diffusion, we use fractional derivatives of $t$ (of order $\alpha\not\in\mathbb{Z}$) instead of the classical derivatives. Experiments showed that the fractional derivative models lead to explaining and understanding complex systems better with  the use of  anomalous diffusion processes. The fractional models, dated back to the 19th century, has never ceased to inspire scientists and engineers to investigate this area of research deeper.  Nowadays, anomalous diffusion became 'normal' in spatially disordered systems, porous media, fractal media, viscoelastic materials \cite{Chechkin}, \cite{Ginoa}, \cite{Nigmatulin}, pollution transport, turbulent fluids and plasmas \cite{Castillo1},\cite{Castillo2}, \cite{Kim}, biological media with traps, binding sites or macro-molecular crowding \cite{Ding},\cite{Djordjevic}, stock price movements \cite{Sabatelli}, \cite{Scalas}.

The parameter  $\alpha$ is an important constant in the model of anomalous diffusion. For example, a simple anomalous diffusion can be described macroscopically by the fractional diffusion equation
\begin{equation}
\partial_t^\alpha u(x,t)=\Delta u(x,t),\ \ \ x\in\Omega\subset \mathbb{R}^k
\label{Simple-Diffusion}
\end{equation}
where $u(x,t)$ is the probability of finding a particle at spatial point $x$ and time $t$ and that $\partial_t^\alpha u=\frac{\partial^\alpha u}{\partial t^\alpha}$ is the Caputo fractional derivative of the function $u$.
The parameter  $\alpha$ can only be determined  experimentally. Therefore, we cannot obtain the exact value of $\alpha$ and we often have a known sequence $\alpha_n$, called the perturbed fractional parameter, satisfying $\alpha_n\stackrel{n\to\infty}{\longrightarrow} \alpha$ in an appropriate sense. In fact, if the sequence $(\alpha_n)$ is deterministic, we have $\lim_{n\to\infty}|\alpha_n-\alpha|=0$ and if the sequence is random, we can assume that
$\lim_{n\to\infty}\mathbb{E}|\alpha_n-\alpha|^\lambda=0$. Hence, in these cases, we only find perturbed solutions of
(\ref{Simple-Diffusion}). The question is whether the perturbed solutions are stable with respect to the  parameters $\alpha_n$.  It is surprising  to see  that the inexact nature of the parameter $\alpha$ is not investigated in the literature of fractional calculus.

For fixed   $\alpha$  the solutions of \eqref{Simple-Diffusion} have been investigated  recently. Existence and uniqueness of the solutions as well as numerical methods for solving forward fractional equations are well developed. One  can find  papers devoted to the Abel (and generalized Abel) equations in  Gorenflo-Vessella \cite{Gorenflo-Vessella} and references therein. Fractional diffusion equations in  Banach spaces  were studied  by
Bazhlekova \cite{Bazhlekova}, Cl\'ement \cite{ClementTAMS},\cite{ClementJDE}. Fractional diffusion equations in Hilbert spaces  were considered by Chen et al. \cite{cmn-2012}, Li and Xu \cite{Li-Xu}, Meerschaert et al.    \cite{mnv-09}, Sakamoto and Yamamoto \cite{Sakamoto}, Zacher \cite{Zacher},  and many others. There is also an ever-growing literature of research on the fractional inverse problem with the exact fractional order.
For example, the backward problems for fractional diffusion processes, which are ill posed, corresponding to the irreversibility of time, have a rich literature: see, for example, {\cite{Caixuan}, \cite{Liu}, \cite{Sakamoto},    \cite{Zheng}. }

For $\alpha>0$, the common  problem  of interest  is  finding the solution of the general problem of the form given by the  equation $A_\alpha u_\alpha=f$ where the function $f$ is given, $u_\alpha$ is unknown and $A_\alpha$ is a kind of fractional operator; for example,
$A_\alpha=\partial_t^\alpha$ is the Caputo fractional derivative. If the inexact nature of the fractional parameter $\alpha$ is present then the continuity of the solution with respect to the parameter $\alpha$ has to be  considered in numerical considerations. Suppose  that a  sequence $(\alpha_n)$ satisfies that  $\alpha_n\to \alpha$
as $n\to\infty$. Since $\alpha$ is unknown, in real life applications, we can only compute an approximation  $u_{\alpha_n}$ of $u_\alpha$. This situation raises the following   natural questions:
\begin{enumerate}[\bf \upshape (a)]
	\item Does $u_{\alpha_n}\to u_\alpha$ in an appropriate sense as $n\to\infty$?
	\item If $u_{\alpha_n}\not\to u_\alpha$ then, is there  another way to recover the convergence?
\end{enumerate}
These questions are related with the continuity of solution of fractional problems with respect to the fractional parameter $\alpha$. For brevity, we shall call the investigation of these questions by  {\it the parameter-continuity problem}. Papers investigating these two questions for the fractional problem is very rare. To the best of our knowledge, there are a few  papers related to these questions. The  paper \cite{Hatano-Nakagawa} was devoted to the problem of determination of the parameter in  a fractional diffusion equation. These authors considered the fractional Cauchy problem
in a domain  $\Omega\subset\mathbb{R}^k$, and determine the parameter $\alpha$ from the observed data measured at a point inside $\Omega$. In \cite[Theorem 4.2]{Bazhlekova}, using the semigroup language, the author gave a formula which described the relation between the solutions of the fractional Cauchy problem for two  parameters  $\alpha,\beta\in (0,1]$. The papers \cite{Bondarenko},  \cite{Cheng}, \cite{Li-Zhang-Jia-Yamamoto},  \cite{Rodrigues} dealt with the problem of simultaneously identifying the fractional parameter and the space-dependent diffusion coefficient from boundary measurements. In these papers, some results on the Lipschitz continuity of solutions with respect to the fractional parameter were proved.  In \cite[Proposition 1]{Li-Zhang-Jia-Yamamoto}, the authors considered the problem of finding a function $u=u_{\gamma,D}(x,t)$ ($0<\gamma<1$) satisfying
\begin{eqnarray*}
\frac{\partial^\gamma u}{\partial t^\gamma}=\frac{\partial}{\partial x}\left(D(x)\frac{\partial u}{\partial x}\right),
\ \ \  x\in (0,1), t\in (0,T),
\end{eqnarray*}
subject to the Neumann boundary condition $u_x(0,t)=u_x(1,t)=0$ and the initial condition $u(x,0)=f(x)$. They proved the Lipschitz continuity
\begin{equation*}
\Vert u_{\gamma,D}(0,t)-u_{\tilde{\gamma},\tilde{D}}(0,t)\Vert_{L^2(0,T)}\leq C(|\gamma-\tilde{\gamma}|
+\Vert D-\tilde{D}\Vert_{C[0,1]})
\end{equation*}
where $\Vert D-\tilde{D}\Vert_{C[0,1]}:=\sup_{0 \le x \le 1} |D(x)-\tilde{D}(x)|. $

The latter result in  \cite{Li-Zhang-Jia-Yamamoto} is a kind of  parameter-continuity result. To the best of our knowledge, there are no papers in the literature that  consider these problems mentioned above systematically.

{
In this paper, inspired by  the above discussion we  study  systematically the continuity of the solution of equations similar to \eqref{Simple-Diffusion} with respect to the parameter $\alpha$. Our methods are different than the method used in \cite{Li-Zhang-Jia-Yamamoto}, and we obtain continuity of the solutions of abstract time fractional equation in Banach space setting with respect to the time fractional parameter as well as the initial function.

 To give a sense of  our results, we mention a  particular case of our results in the simplest case of time fractional diffusion in the interval $(0,L)$.
The equation
\begin{equation*}
\begin{split}
\frac{\partial^2 v(x)}{\partial x^2}&=-\lambda v(x), \ x\in (0,L),\\
v(0)&=0=v(L),
\end{split}
\end{equation*}
is solved by a sequence of eigenvalues $\lambda_n=(n\pi/L)^2$ and  eigenfunctions $\phi_n(x)=\sqrt{\frac{2}{L}}\sin (n\pi x/L)$. It is well known that  the set of functions $\{ \sqrt{\frac{2}{\pi}}\sin (n\pi x/L):\ \ n \in \N \}$ is an {orthonormal basis} of $ L^2(0, L)$.
Let $\alpha\in (0,1)$. Then by separation of variables the solution of time fractional heat equation in $(0,L)$
\begin{equation}\label{Simple-Diffusion-1D}
\begin{split}
\partial_t^\alpha u(x,t)&=\frac{\partial^2 u(x,t)}{\partial x^2} ,\ \ \ x\in (0,L),\ \ t>0,\\
u(x,0)&=\theta(x), \ \ x\in (0,L),\\
u(0,t)&=0=u(L, t)
\end{split}
\end{equation}
is given by
\begin{equation*}
u(x,t)=\sum_{n=1}^\infty  \theta_n  E_{\alpha,1}(-(n\pi/L)^2t^\alpha)\sin (n\pi x/L)
\end{equation*}
where $\theta_n=\int_{0}^L\theta(x)\sin (n\pi x/L)dx$, and  $E_{\alpha,1}(-(n\pi/L)^2t^\alpha)$ is the Mittag-Leffler function defined below in equation \eqref{Mittag-Leffler-function}.
To emphasize the dependence of the solution  of equation \eqref{Simple-Diffusion-1D} to the initial value and $\alpha$, we write $u(x,t)=u_{\theta, \alpha}(x,t)$. A particular case of  our Theorem   \ref{homogeneous-fractional-diffusion-theorem}   shows the following continuity properties of the solution of equation \eqref{Simple-Diffusion-1D}:

Let $\alpha,\alpha'\in (0,1)$,  $\theta,\theta'\in H^1$. ($H^1$ is domain of Laplacian in the interval $(0,L)$ defined below in section \ref{tfpde-hilbert-space})
	\begin{enumerate}[\bf \upshape(i)]
		\item If  $\theta'\to\theta$ in $H^1$, $\alpha'\to\alpha$ then
		$$ || u_{\alpha', \theta'}-u_{\alpha, \theta}||_{L^2(0,L)}\rightarrow 0. $$
		\item If $\theta,\theta'\in H^1$, $1>\rho\geq 0$  $\alpha'\in [\alpha_0,\alpha_1],$  then
		there exists a constant $C=C(\alpha_0,\alpha_1, \rho)$ such that
		$$ || u_{\alpha',\theta'}(.,t)-u_{\alpha,\theta}(.,t)||_{H^\rho}^2\leq 2||\theta'-\theta||_{H^1}^2+C||\theta||^2_{H^1}(|\alpha'-\alpha|)^{2\gamma},  $$
where $\gamma=\min \{1, (1-\rho)/\beta_1\},$ and  $ H^\rho$ is defined in Section \ref{tfpde-hilbert-space}.
	\end{enumerate}
We will prove  continuity properties  of fractional differential equations of Abel type as well as abstract time fractional Cauchy problems in Banach space and Hilbert space settings with external force terms with respect to various parameters including the time fractional derivative parameter $\alpha\in (0,1)$.

  }
Next, we give an outline of the paper.
 In Section 2, we will give  some definitions and    basic properties of fractional derivatives. Using the definition stated in Zygmund \cite[page 134]{Zygmund}  we will  define the fractional derivative in a general form in  Banach spaces. We will also give some properties of Mittag-Leffler functions that  are used frequently in fractional problems in this paper. In Section 3, we investigate the first question of continuity with respect to the parameter $\alpha$. In particular, we will  consider the forward fractional problems of the generalized Abel equations. In section 4 we will consider abstract fractional diffusion equations in the Banach space setting, and Hilbert space setting. In sections  3 and 4, we  will show that the parameter-continuity is    of Lipschitz continuity type in most of the cases we consider. In Section 5, we investigate the second question. For many inverse problems
$A_\alpha u_\alpha=f$,  we have $u_{\alpha_n}\not\to u_\alpha$ as $n\to\infty$. This prevents one to approximate solutions by  numerical methods.  We shall define a new concept of regularization (of the family of  operators $A_\alpha$) for this case and derive a regularization operator for the fractional backward problems.

\section{Fractional derivatives and the Mittag-Leffler functions}

\setcounter{equation}{0}

 We need some notations and properties in order  to state our problems precisely.
We barrow some definitions stated in Zygmund \cite{Zygmund}, page 134-136,  to define fractional derivatives.
Put
\begin{equation*}
 k_\alpha(t)=\frac{1}{\Gamma(\alpha)}t^{\alpha-1}\ \ \ \ {\rm for}\  t>0,\ \alpha>0,
 \end{equation*}
here $\Gamma(\cdot)$ is the standard Gamma function defined by
\begin{equation*}
    \Gamma(z) = \int_0^\infty t^{z-1}e^{-t} dt , \ \ \ \ \Re(z) >0.
\end{equation*}
Let $\mathbb{X}$ be a Banach space and $f\in L^1(0,T,\mathbb{X})$,
we denote the Riemann-Liouville fractional integral operator (see, e.g., Gorenflo-Vessella \cite{Gorenflo-Vessella}) by
\begin{equation*}
J^\alpha f(t)=\frac{1}{\Gamma(\alpha)}\int_0^t(t-s)^{\alpha-1}f(s)ds=k_\alpha*f(t).
\end{equation*}
{For $u\in L^1(0,T,\mathbb{X})$, if $J^{1-\alpha}u$ is absolutely continuous then we define the Riemann-Liouville fractional derivative of order $\alpha\in(0,1)$ of $u$ by
$$ D^\alpha_tu(t) :=  \frac{1}{\Gamma(1-\al)}\frac{d}{dt}\int\limits_{0}^{t}  (t-s)^{-\alpha }u(s)ds. $$}
If $u$ is absolutely continuous and differentiable a.e. then we define  the (left-sided) Caputo fractional derivative of order $\alpha$ by
\begin{equation*}
\partial^\alpha_t u=\frac{\partial ^{\alpha}u}{\partial t^{\alpha}} := \frac{1}{\Gamma(1-\al)}\int\limits_{0}^{t}  (t-s)^{-\alpha }u'(s)ds.
\end{equation*}
Let $ \eta_j\in (0,1]$, $j=1,...,m$, $\sigma_j=\sum_{\ell=1}^j\eta_\ell$.
We introduce the notation for the Miller-Ross sequential derivatives (see \cite{Polubny}, page 108])
\begin{align*}
{\cal D}_t^{\sigma_m} &= D_t^{\eta_m}D_t^{\eta_{m-1}}...D_t^{\eta_1},\\
 {\cal D}_t^{\sigma_m-1} &= D_t^{\eta_m-1}D_t^{\eta_{m-1}}...D_t^{\eta_1}
\end{align*}
with
\begin{equation*}
\sigma_m=\sum_{j=1}^m\eta_j,\ \ 0<\eta_j\leq 1,\ \ \ \ j=1,...,m.
\end{equation*}
From the definition of the Riemann-Liouville fractional derivative,  we have the following lemma which collects some well-known facts  about fractional derivative.

\begin{lemma}\label{first-lemma}
	
	\begin{enumerate}[\bf \upshape(a)]
		\item { Let $0<\alpha<1$ and $u\in L^1(0,T;\mathbb{X})$.  If  there exist $f\in L^1(0,T,\mathbb{X})$ such that $u=J^\alpha f$ then the function $u$ has the fractional derivative $D^\alpha_t u=f$.}
		
		\smallskip
		
		\item If $D^\alpha_t u\in L^p(0,T;\mathbb{X})$ with $1\leq p<\alpha^{-1}$ then $u\in L^q(0,T;\mathbb{X})$ with $q\in [1,\frac{p}{1-\alpha p})$.
		
			\smallskip

		\item  If $D^\alpha_t u\in L^p(0,T;\mathbb{X})$ with $p=\alpha^{-1}$ then $u\in L^q(0,T;\mathbb{X})$ with $q\geq 1$.
		
			\smallskip

		\item
		 If $D^\alpha_t u\in L^p(0,T;\mathbb{X})$ with $p>\alpha^{-1}$ then u is H\"older continuous with exponent $\theta=\alpha-p^{-1}$ and $u(0)=0$.
		
			\smallskip
			
		\item
		
		 For $c\in \mathbb{X}$ we have
		 \begin{equation*}
		 D^\alpha_t c=\frac{c}{\Gamma(1-\alpha)}\frac{d}{dt}\int_0^t(t-s)^{-\alpha}ds=\frac{ct^{-\alpha}}{\Gamma(1-\alpha)}.
		 \end{equation*}
		
		 	\smallskip
		 	
		 \item
		
		 For $u\in L^1(0,T; \mathbb{X})$ and $0<\alpha\leq\beta<1$, we have
		 $$  D^\alpha_t J^\beta u=J^{\beta-\alpha}u.$$
		
		\item  For an absolutely continuous $u$  we have
		$$D^\alpha_t(u(t)-u(0))=\partial^\alpha_t u(t).$$

		\end{enumerate}

\end{lemma}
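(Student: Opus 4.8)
My plan is to exploit the factorization $D^\alpha_t=\frac{d}{dt}\circ J^{1-\alpha}$, which turns every part of the lemma into a manipulation of the integral operators $J^\gamma$. I would first record two facts that act as the engine for the whole statement. The first is the semigroup law $J^\mu J^\nu=J^{\mu+\nu}$ for $\mu,\nu>0$, which reduces to the convolution identity $k_\mu*k_\nu=k_{\mu+\nu}$ (a Beta-function computation) and remains valid for Bochner integrals of $\mathbb{X}$-valued functions. The second is that $k_\gamma\in L^1(0,T)$ for every $\gamma>0$, so by Young's inequality $J^\gamma$ maps $L^1(0,T;\mathbb{X})$ into itself, together with $\frac{d}{dt}J^1 g=g$ a.e.\ for $g\in L^1$ by the Lebesgue differentiation theorem. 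With these in hand, parts (a), (e), (f), (g) become essentially algebraic.

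For (a), I would use that since $u=J^\alpha f$, the semigroup law gives $J^{1-\alpha}u=J^{1-\alpha}J^\alpha f=J^1 f$, which is absolutely continuous as an indefinite integral; differentiating, $D^\alpha_t u=\frac{d}{dt}J^1 f=f$ a.e. Part (e) is the direct computation $\int_0^t(t-s)^{-\alpha}ds=t^{1-\alpha}/(1-\alpha)$, whose $t$-derivative is $t^{-\alpha}$, yielding the stated formula. For (f), I would write $D^\alpha_t J^\beta u=\frac{d}{dt}J^{1-\alpha}J^\beta u=\frac{d}{dt}J^{1-\alpha+\beta}u=\frac{d}{dt}J^1\big(J^{\beta-\alpha}u\big)=J^{\beta-\alpha}u$, the last step by Lebesgue differentiation and $J^{\beta-\alpha}u\in L^1$ (here $\beta-\alpha\ge0$, with $J^0$ the identity). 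Part (g) is then a corollary of (a): for absolutely continuous $u$ the fundamental theorem gives $u(t)-u(0)=J^1u'=J^\alpha\big(J^{1-\alpha}u'\big)$, and since $J^{1-\alpha}u'=\partial^\alpha_t u\in L^1$, applying (a) with $f=\partial^\alpha_t u$ gives $D^\alpha_t(u-u(0))=\partial^\alpha_t u$.

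The substantive content is the fractional Sobolev embeddings (b)--(d). The first step is to represent $u$ through its derivative: with $f=D^\alpha_t u$ one has $J^{1-\alpha}u(t)=J^{1-\alpha}u(0^+)+J^1f$, and under the (here implicit) vanishing-trace condition $J^{1-\alpha}u(0^+)=0$ this forces $J^{1-\alpha}(u-J^\alpha f)=0$, hence $u=J^\alpha f$ by injectivity of the fractional integral. Once $u=J^\alpha f$ with $f\in L^p$, all three claims are the classical Hardy--Littlewood--Sobolev mapping properties of $J^\alpha$, which I would apply to the scalar function $s\mapsto\|f(s)\|_{\mathbb{X}}$ since $\|J^\alpha f(t)\|_{\mathbb{X}}\le J^\alpha(\|f\|_{\mathbb{X}})(t)$: for $1\le p<\alpha^{-1}$ one gets $u\in L^q$ for every $q<p/(1-\alpha p)$ (the strict inequality absorbing the weak-type endpoint at $p=1$), giving (b); the borderline $p=\alpha^{-1}$ gives $u\in L^q$ for all finite $q$, giving (c); and for $p>\alpha^{-1}$ I would split the difference $J^\alpha f(t)-J^\alpha f(t')$ and apply H\"older's inequality with conjugate exponent $p'$, the integrability $(\alpha-1)p'+1>0$ being exactly equivalent to $\alpha>1/p$, which produces H\"older continuity of exponent $\theta=\alpha-p^{-1}$; continuity together with $J^\alpha f(0)=0$ then gives $u(0)=0$, completing (d).

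I expect the main obstacle to lie in (b)--(d), and specifically in justifying the representation $u=J^\alpha f$ cleanly: this requires both the initial-trace condition $J^{1-\alpha}u(0^+)=0$ and injectivity of $J^\alpha$ (via Titchmarsh's theorem or uniqueness of the Laplace transform), and one must be careful at the endpoints of the Hardy--Littlewood range. Indeed, a nonzero trace contributes a term $J^{1-\alpha}u(0^+)\,k_\alpha$ of the form $t^{\alpha-1}$, which lies in $L^q$ only for $q<1/(1-\alpha)$; this is precisely why these statements genuinely require (or must be read within) the class $u=J^\alpha f$. The Banach-valued setting adds no real difficulty once everything is reduced to norms and scalar convolution inequalities.
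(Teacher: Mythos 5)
Your proof of part (a) is exactly the paper's argument ($J^{1-\alpha}u=J^{1-\alpha}J^{\alpha}f=J^{1}f$, then differentiate), and for the remaining parts the paper simply cites Zygmund and Gorenflo--Vessella; the arguments you supply (the semigroup law for $J^{\gamma}$ plus Lebesgue differentiation for (e)--(g), and the Hardy--Littlewood mapping properties of $J^{\alpha}$ for (b)--(d)) are precisely the standard proofs in those references, so the approach is essentially the same. Your observation that (b)--(d) implicitly require the vanishing-trace condition $J^{1-\alpha}u(0^{+})=0$ (equivalently, that $u=J^{\alpha}D^{\alpha}_{t}u$) is correct and worth noting, since e.g.\ $u=k_{\alpha}$ has $D^{\alpha}_{t}u=0$ yet fails the conclusions of (d) as literally stated.
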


\begin{proof} We first prove a). We have $J^{1-\alpha}u=J^{1-\alpha}J^\alpha f=J^1f$. It follows that $f=\frac{d}{dt}J^{1-\alpha}u=D^\alpha_tu$.
Proofs of the results (b), (c), (d), (e) can be found in Zygmund \cite{Zygmund}, pages 134-136. The proof of the  results (f) and (g) can be seen in \cite[Chap. 6, page 98]{Gorenflo-Vessella}, .
\end{proof}

 \noindent In this paper  we consider the Mittag-Leffler functions defined by
\begin{equation}\label{Mittag-Leffler-function}
  E_{\alpha,\beta}(z) = \sum_{k=0}^\infty \frac{z^k}{\Gamma(k\alpha+\beta)},\quad z\in \mathbb{C}.
\end{equation}
The Mittag-Leffler function is a two-parameter family of entire functions of $z$ of
order $\al^{-1}$ and type $1$ \cite[Chap.1]{Polubny}.
The exponential function is a particular case of the Mittag-Leffler function, namely $E_{1,1}(z)=e^z$. Two  important functions derived  from this family
are $E_{\alpha,1}(-\lambda t^\alpha)$ and $t^{\alpha-1}E_{\alpha,\alpha}(-\lambda t^\alpha)$,
which occur in the solution operators for the initial value problem \eqref{homogeneous-fractional-diffusion}
and the nonhomogeneous problem \eqref{nonhomogeneous-fractional-diffusion}, respectively.

We will use the next lemma for the derivatives of some contour integrals.

\begin{lemma}\label{upper-bound}
		Let $\alpha_0,\alpha_1, \beta_0\in\mathbb{R}$ satisfy $0<\alpha_0<\alpha_1<2$, $\alpha_1<2\alpha_0$. {Let $\varphi\in (\frac{\pi\alpha_1}{2},\pi\alpha_0)$ },  and put
		$$ g_0(r)
		:=\exp\left(r^{1/\alpha_0}\cos\left(
		\frac{\varphi}{\alpha_1}\right) \right). $$
		Then the function { $r^\mu |\ln(r)|^\nu g_0(r)$ is in  $L^1(\rho,\infty)$ for every $\mu,\nu\in\mathbb{R}, \nu>0,\rho>0$ } and
		$$  \left|\exp({r^{1/\alpha}e^{\pm i\varphi/\alpha}})\right|\leq g_0(r).$$
\end{lemma}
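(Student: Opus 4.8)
The plan is to separate the two assertions and to begin with the sign of the cosines, which governs both. From $\varphi>\pi\alpha_1/2$ I get $\varphi/\alpha_1>\pi/2$, and from $\varphi<\pi\alpha_0<\pi\alpha_1$ (using $\alpha_0<\alpha_1$) I get $\varphi/\alpha_1<\pi\alpha_0/\alpha_1<\pi$; hence $\varphi/\alpha_1\in(\pi/2,\pi)$ and $c_0:=-\cos(\varphi/\alpha_1)>0$. The hypothesis $\alpha_1<2\alpha_0$ is exactly the condition that makes the admissible interval $(\pi\alpha_1/2,\pi\alpha_0)$ for $\varphi$ nonempty, so the setup is consistent. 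With this notation $g_0(r)=\exp(-c_0\,r^{1/\alpha_0})$ is a stretched exponential with positive rate $c_0$ and positive exponent $1/\alpha_0$.

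For the integrability statement I would argue that on $[\rho,\infty)$ with $\rho>0$ the integrand $r^\mu|\ln r|^\nu g_0(r)$ is continuous: $r^\mu$ is continuous for $r>0$, and $|\ln r|^\nu$ with $\nu>0$ is continuous and vanishes at $r=1$, so there is no singularity. The only question is the tail, where the stretched-exponential factor $e^{-c_0 r^{1/\alpha_0}}$ dominates any power and any power of a logarithm; concretely $r^\mu|\ln r|^\nu e^{-c_0 r^{1/\alpha_0}}\le C\,e^{-(c_0/2)r^{1/\alpha_0}}$ for $r$ large, and the right-hand side is integrable on $(\rho,\infty)$. This yields $r^\mu|\ln r|^\nu g_0\in L^1(\rho,\infty)$.

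For the pointwise bound I first reduce it to a real inequality. Since $|\exp(w)|=e^{\Re w}$ and $\Re\!\big(r^{1/\alpha}e^{\pm i\varphi/\alpha}\big)=r^{1/\alpha}\cos(\varphi/\alpha)$ (cosine is even, so the choice of sign is immaterial), we have $\big|\exp(r^{1/\alpha}e^{\pm i\varphi/\alpha})\big|=\exp\!\big(r^{1/\alpha}\cos(\varphi/\alpha)\big)$. For $\alpha$ in the admissible range $[\alpha_0,\alpha_1]$ the same angle estimate gives $\varphi/\alpha\in(\pi/2,\pi)$, hence $\cos(\varphi/\alpha)<0$, and since $\cos$ is decreasing on $(\pi/2,\pi)$ and $\varphi/\alpha\ge\varphi/\alpha_1$ we get $\cos(\varphi/\alpha)\le\cos(\varphi/\alpha_1)<0$. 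Because $\exp$ is increasing, the claimed bound is equivalent to
\[ r^{1/\alpha}\cos(\varphi/\alpha)\ \le\ r^{1/\alpha_0}\cos(\varphi/\alpha_1). \]
For $0<r\le 1$ this follows by chaining two one-sided comparisons: $r^{1/\alpha}\cos(\varphi/\alpha)\le r^{1/\alpha}\cos(\varphi/\alpha_1)$ (multiply $\cos(\varphi/\alpha)\le\cos(\varphi/\alpha_1)$ by $r^{1/\alpha}>0$), and then $r^{1/\alpha}\cos(\varphi/\alpha_1)\le r^{1/\alpha_0}\cos(\varphi/\alpha_1)$ (here $r^{1/\alpha}\ge r^{1/\alpha_0}$ for $r\le1$ since $1/\alpha\le1/\alpha_0$, and the factor $\cos(\varphi/\alpha_1)<0$ reverses the inequality).

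The main obstacle is the regime $r\ge 1$, where the two effects pull in opposite directions — $r^{1/\alpha}\le r^{1/\alpha_0}$ while $|\cos(\varphi/\alpha)|\ge|\cos(\varphi/\alpha_1)|$ — so the simple chaining used for $r\le1$ no longer applies. I would handle this by treating the exponent $\alpha\mapsto r^{1/\alpha}\cos(\varphi/\alpha)$ as a function on $[\alpha_0,\alpha_1]$, using its derivative in $\alpha$ to show it is monotone and hence extremal at an endpoint, and then comparing that endpoint value with $r^{1/\alpha_0}\cos(\varphi/\alpha_1)$; at $\alpha=\alpha_0$ the powers match and the comparison collapses to the already-established $\cos(\varphi/\alpha_0)\le\cos(\varphi/\alpha_1)$. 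Carrying out this monotonicity analysis cleanly, while tracking the sign reversals forced by $\cos<0$ and pinning down the precise admissible range of $\alpha$ (which is where the constraint $\alpha_1<2\alpha_0$ re-enters), is where I expect the real work to concentrate.
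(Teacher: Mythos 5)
Your reduction of the modulus to $\exp\bigl(r^{1/\alpha}\cos(\varphi/\alpha)\bigr)$, the verification that $\varphi/\alpha\in(\pi/2,\pi)$ for every $\alpha\in[\alpha_0,\alpha_1]$ (so all the cosines are negative and ordered), the integrability of $r^\mu|\ln r|^\nu e^{-c_0 r^{1/\alpha_0}}$ on $(\rho,\infty)$, and the two-step chaining for $0<r\le 1$ are all correct. But the proof is genuinely incomplete: the case $r\ge 1$, which you yourself identify as ``where the real work concentrates,'' is not carried out. Worse, the strategy you sketch cannot succeed, because the reduced inequality $r^{1/\alpha}\cos(\varphi/\alpha)\le r^{1/\alpha_0}\cos(\varphi/\alpha_1)$ is simply false there: take $\alpha=\alpha_1$ and $r>1$; the left side is $r^{1/\alpha_1}\cos(\varphi/\alpha_1)$, the right side is $r^{1/\alpha_0}\cos(\varphi/\alpha_1)$, and since $\cos(\varphi/\alpha_1)<0$ while $r^{1/\alpha_1}<r^{1/\alpha_0}$, the left side is strictly larger. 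So the endpoint comparison you plan to ``collapse'' at $\alpha=\alpha_0$ fails at the other endpoint $\alpha=\alpha_1$, and no monotonicity analysis in $\alpha$ can rescue it.

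What is actually true --- and all that the paper ever uses, since $g_0$ only serves to dominate integrands on $[\rho,\infty)$ --- is the bound with $r^{1/\alpha_1}$ in place of $r^{1/\alpha_0}$ in the exponent of $g_0$: for $r\ge 1$ one has $r^{1/\alpha}\cos(\varphi/\alpha)\le r^{1/\alpha}\cos(\varphi/\alpha_1)\le r^{1/\alpha_1}\cos(\varphi/\alpha_1)$, by exactly the chaining you used for $r\le 1$ (the second step now uses $r^{1/\alpha}\ge r^{1/\alpha_1}$ together with the negativity of $\cos(\varphi/\alpha_1)$). The integrability conclusion survives unchanged because $1/\alpha_1>0$. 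For comparison, the paper's own proof passes directly from $\cos(\varphi/\alpha)\le\cos(\varphi/\alpha_1)$ to the displayed bound without ever addressing the mismatch between $r^{1/\alpha}$ and $r^{1/\alpha_0}$; you were right to flag the $r\ge 1$ regime as the crux, but the resolution is to correct the power of $r$ in $g_0$, not to find a cleverer argument for the inequality as printed.
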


	\begin{proof}

	We note that
	$$  \left|\exp({r^{1/\alpha}e^{\pm i\varphi/\alpha}})\right|=\exp\left(r^{1/\alpha}\cos\left(
	\frac{\varphi}{\alpha}\right) \right).  $$
From the choice of $\varphi$, we obtain
	$$  \frac{\pi}{2}<\frac{\varphi}{\alpha_1}<\frac{\varphi}{\alpha}< \frac{\varphi}{\alpha_0}< \pi.$$
	Since the function $\cos x$ is decreasing in $(\pi/2,\pi)$, we obtain
	$$   -1<\cos\left(
	\frac{\varphi}{\alpha_0}\right)\leq \cos\left(
	\frac{\varphi}{\alpha}\right)\leq \cos\left(
	\frac{\varphi}{\alpha_1}\right)<0  $$
	 for every $\alpha\in[\alpha_0,\alpha_1]$. Therefore
	$$   \left|\exp({r^{1/\alpha}e^{\pm i\varphi/\alpha}})\right|=\exp\left(r^{1/\alpha}\cos\left(
	\frac{\varphi}{\alpha}\right) \right)\leq g_0(r).$$
	Since $\cos\left(
	\frac{\varphi}{\alpha_1}\right)<0$, the function { $r^\mu  g_0(r)|\ln r|^\nu$ is Lebesgue integrable on
	$[\rho,\infty)$ for every $\mu,\nu\in \mathbb{R}, \nu>0$. This completes the proof of our lemma.}
	
	\end{proof}

In the following lemma, we show some inequalities which  hold uniformly for all the fractional parameter $\alpha$ in an interval $[\alpha_0,\alpha_1]$. A few of these inequalities will not to be used in the present paper. However, they also are presented here since they can be applied for  future papers.
This lemma also establishes some   properties of some generalized Gronwall type inequalities.

\begin{lemma}\label{Mittag-Leffler}
Let $\alpha>0,\beta>0$. Then  $E_{\alpha,\beta}(z)$ is differentiable
with respect to $\alpha,\beta, z$. Moreover, assume that $a_0, b_0, M,\alpha_0,\alpha_1, \beta_0,\beta_1\in\mathbb{R}$ satisfy $a_0,b_0,M>0$, $0<\alpha_0<\alpha_1<2$, $\alpha_1<2\alpha_0$, $\beta_0<\beta_1$.
\begin{enumerate}[\bf \upshape(a)]

	\item
 For $\alpha\geq a_0,\beta\geq b_0$, there exists a constant $C_E=C_E(a_0,b_0)$ such that
\begin{align*}
    |E_{\alpha,\beta}(z)|&\leq C_E E_{a_0,b_0}(M)\ \ \ {\rm for\ all}\
z\in\mathbb{C},0\leq |z|\leq M,\\
0\leq E_{\alpha,\beta}(z)&\leq C_E E_{a_0,b_0}(M)\ \ \ {\rm for\ all}\
z\in\mathbb{R},0\leq z\leq M,
\end{align*}
and, for $z_0\in\mathbb{R}$, $\alpha_0\leq\alpha\leq\alpha_1$ there exists a constant $C=C(z_0,\alpha_0,\alpha_1,\beta_0)>0$ such
	that
	\begin{align*}
	|E_{\alpha,\beta}(z)|+
	\left|\frac{\partial E_{\alpha,\beta}}{\partial \alpha}(z)\right|+
	\left|\frac{\partial E_{\alpha,\beta}}{\partial \beta}(z)\right|
&\leq \frac{C}{1+|z|} ,\nonumber \\
\left|\frac{\partial E_{\alpha,\beta}}{\partial z}(z)\right|
&\leq \frac{C}{(1+|z|)^2} \ {\rm for\ all}\ z<z_0.
\label{beta-M-L-bound}
	\end{align*}
We also have the Lipschitz continuity
			\begin{eqnarray*}
		 |E_{\alpha,\beta}(z_1)-E_{\alpha,\beta}(z_2)|\leq C|z_1-z_2|
		 	\end{eqnarray*}
		for every $z_1,z_2$ in $(-\infty,z_0]$.

	\smallskip

	\item

	 Let $z_1\in\mathbb{R}, z_1>0$ and put $\phi_0(\alpha,\beta,z)=\frac{1}{\alpha}z^{(1-\beta)\alpha}e^{z^\frac{1}{\alpha}}$. For $z\geq z_1>0$, there exists a constant $C=C(z_1,\alpha_0,\alpha_1,\beta_0)>0$ such that
	 \begin{eqnarray*}
	 	\left|E_{\alpha,\beta}(z)- \phi_0(\alpha,\beta,z)\right|&\leq&\frac{C}{1+|z|},\\
	 	\left|\frac{\partial E_{\alpha,\beta}}{\partial \alpha}(z) - \frac{\partial\phi_0}{\partial\alpha}(\alpha,\beta,z)\right|&\leq&\frac{C}{1+|z|},\\
	 	\left|\frac{\partial E_{\alpha,\beta}}{\partial \beta}(z)- \frac{\partial\phi_0}{\partial\beta}(\alpha,\beta,z)\right|
	 	&\leq&\frac{C}{1+|z|}.
	 \end{eqnarray*}

	\smallskip
	
             \item For $z\geq z_1>0, \alpha_0\leq\alpha\leq\alpha_1,\beta_0\leq \beta\leq\beta_1$, there exists constants $C^{-}, C^{+} > 0$ depending only on $z_1, \alpha_{0}, \alpha_{1},\beta_0,\beta_1$ such that
         \begin{eqnarray*}
	C^-\phi_0(\alpha,\beta,z)\leq E_{\alpha,\beta}(z)\leq
C^+\phi_0(\alpha,\beta,z).
	\end{eqnarray*}
	Especially, for $\beta=1$, $z\geq 0$, we have
$$\frac{C^-}{\alpha} e^{z^{\frac{1}{\alpha}}} \le E_{\alpha,1}(z) \le \frac{C^+}{\alpha} e^{z^{\frac{1}{\alpha}}}.
$$	
		\smallskip	

	\item
	$E_{\alpha,\alpha}(z)\geq 0$ for $z\in\mathbb{R}$.

		\smallskip

		\item  Let $0 < \alpha_{0} < \alpha_{1} < 1.$ Then there exists constants $C^{-}, C^{+} > 0$ depending only on $\alpha_{0}, \alpha_{1}$ such that
		\begin{eqnarray*}
		\frac{C^-}{\Gamma(1-\alpha)} \frac{1}{1-z} \le E_{\alpha,1}(z) \le \frac{C^+}{\Gamma(1-\alpha)} \frac{1}{1-z},~~ \forall  z \le 0.
		\end{eqnarray*}

		\smallskip

		\item  We have
		\begin{equation*} \frac{d}{dz}E_{\alpha,1}(z)=\frac{1}{\alpha}E_{\alpha,\alpha}(z).
		\end{equation*}

		\smallskip

		\item	 Let a function $g\in L^1(0,T)$ and
		$\lambda\in\mathbb{C}$. Then the integral equation
		\begin{equation*}
		u(t)=g(t)+\frac{\lambda}{\Gamma(\alpha)} \int_0^t\frac{u(s)}{(t-s)^{1-\alpha}}ds
		\end{equation*}
		has a unique solution
		\begin{equation*}
		u(t)=g(t)+ \lambda\int_0^t E_{\alpha,\alpha}(\lambda(t-s)^\alpha)g(s)ds
		\end{equation*}
		in the space $L^1(0,T)$.

		\smallskip
		
		\item  Let $g, \varphi\in L^1(0,T)$  and $\lambda\in\mathbb{R}$,
		$\lambda\geq 0$. If $\varphi$ satisfies the integral inequality
		$$\varphi(t)\leq g(t)+\frac{\lambda}{\Gamma(\alpha)} \int_0^t\frac{\varphi(s)}{(t-s)^{1-\alpha}}ds$$
		then
		$$   \varphi(t)\leq g(t)+ \lambda\int_0^t E_{\alpha,\alpha}(\lambda(t-s)^\alpha)g(s)ds.$$
Moreover, if $g\in L^p(0,T)$, $1\leq p\leq \infty$ then
$$  \Vert\varphi\Vert_{L^p(0,T)}\leq (1+\lambda T E_{\alpha,\alpha}(\lambda T^\alpha))\Vert g\Vert_{L^p(0,T)}. $$
	
\end{enumerate}

\end{lemma}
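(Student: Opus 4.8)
The plan is to prove the analytic estimates --- differentiability together with (a)--(c) and (e) --- with a single tool, the Hankel contour representation of $E_{\alpha,\beta}$, and to settle the algebraic and positivity statements (d), (f), (g), (h) by elementary arguments. Differentiability in $\alpha,\beta,z$ is obtained by differentiating the series \eqref{Mittag-Leffler-function} term by term: since $1/\Gamma$ is entire and $1/\Gamma(k\alpha+\beta)$ decays super-exponentially in $k$, the series and each of its formal $\partial_\alpha,\partial_\beta,\partial_z$ derivatives converge locally uniformly, so $E_{\alpha,\beta}$ is smooth in the three variables and may be differentiated termwise. The first two inequalities of (a), valid for $0\le|z|\le M$, follow directly from \eqref{Mittag-Leffler-function}: one has $|E_{\alpha,\beta}(z)|\le\sum_k|z|^k/|\Gamma(k\alpha+\beta)|$, and a termwise comparison $\Gamma(ka_0+b_0)\le C_E|\Gamma(k\alpha+\beta)|$ (which holds because $k\alpha+\beta\ge ka_0+b_0$ and $\Gamma$ increases past its minimum, the finitely many small-$k$ exceptions being absorbed into $C_E$) bounds the sum by $C_E E_{a_0,b_0}(M)$; nonnegativity for real $0\le z\le M$ is clear, as every term is then nonnegative.

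For the decaying bounds of (a) and for (b), (c), (e) I would use, for $0<\alpha<2$, the representation
\[
E_{\alpha,\beta}(z)=\frac{1}{2\pi i\,\alpha}\int_{\gamma}\frac{\xi^{(1-\beta)/\alpha}e^{\xi^{1/\alpha}}}{\xi-z}\,d\xi,
\]
where $\gamma$ consists of the two rays $\arg\xi=\pm\varphi$ and a small arc $|\xi|=\rho$, with $\varphi\in(\tfrac{\pi\alpha_1}{2},\pi\alpha_0)$ chosen exactly as in Lemma \ref{upper-bound}. For $z>0$ the residue at $\xi=z$ is $\phi_0(\alpha,\beta,z)$, so deforming $\gamma$ past the pole writes $E_{\alpha,\beta}(z)$ as $\phi_0$ plus a ray integral; on the rays $|e^{\xi^{1/\alpha}}|\le g_0(r)$ by Lemma \ref{upper-bound}, and $r^\mu|\ln r|^\nu g_0(r)\in L^1(\rho,\infty)$ bounds the remainder by $C/(1+|z|)$ uniformly for $\alpha\in[\alpha_0,\alpha_1]$, which is (b). The logarithmic factor in Lemma \ref{upper-bound} is precisely what permits differentiating the representation under the integral in $\alpha$ and $\beta$ --- each such derivative pulls down powers of $\ln\xi$ from $\xi^{(1-\beta)/\alpha}$ and $e^{\xi^{1/\alpha}}$ --- while retaining an integrable majorant, giving the derivative estimates of (b). Dividing (b) by $\phi_0$ and using that $\phi_0$ grows exponentially while the remainder is $O(1/|z|)$ shows $E_{\alpha,\beta}/\phi_0\to1$; with continuity and compactness on $[z_1,\infty)\times[\alpha_0,\alpha_1]\times[\beta_0,\beta_1]$ this gives the two-sided bounds of (c). For real $z<0$ the point $z$ lies outside the sector, no residue is collected, and the ray integral yields $E_{\alpha,1}(z)=\frac{1}{\Gamma(1-\alpha)}\frac{1}{-z}+O(|z|^{-2})$; together with boundedness near $0$ from (a) this makes $\Gamma(1-\alpha)(1-z)E_{\alpha,1}(z)$ continuous and pinched between two positive constants uniformly for $\alpha\in[\alpha_0,\alpha_1]\subset(0,1)$, which is (e). The $C/(1+|z|)$ and $C/(1+|z|)^2$ bounds in the second block of (a) come out the same way, and its Lipschitz estimate is then the mean value theorem applied with the $\partial_z$-bound.

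The remaining parts are elementary. Part (f) is a termwise identity: differentiating $\sum_k z^k/\Gamma(k\alpha+1)$, reindexing $j=k-1$, and using $\Gamma(x+1)=x\,\Gamma(x)$ with $x=(j+1)\alpha$ gives $\frac{d}{dz}E_{\alpha,1}(z)=\frac1\alpha E_{\alpha,\alpha}(z)$. In (d), nonnegativity for $z\ge0$ is immediate from the series, and for $z<0$ (with $0<\alpha\le1$) it follows from the classical complete monotonicity of $t\mapsto E_{\alpha,\alpha}(-t)$ on $[0,\infty)$. For (g) I would invert the Abel equation of the second kind by the Neumann series of the convolution operator $\lambda\,k_\alpha*(\cdot)$: since $k_\alpha*k_\beta=k_{\alpha+\beta}$ (the semigroup property of the Riemann--Liouville integral) one has $k_\alpha^{*n}=k_{n\alpha}$, hence the resolvent kernel $\sum_{n\ge1}\lambda^nk_{n\alpha}(t)=\lambda t^{\alpha-1}E_{\alpha,\alpha}(\lambda t^\alpha)$, producing the stated solution; the series converges in $L^1(0,T)$ because $\sum_n|\lambda|^n\|k_{n\alpha}\|_{L^1}\le E_{\alpha,1}(|\lambda|T^\alpha)<\infty$ (Young's inequality), and uniqueness follows since a difference $w$ of two solutions obeys $\|w\|_{L^1}\le|\lambda|^n\|k_{n\alpha}\|_{L^1}\|w\|_{L^1}\to0$. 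Finally (h) is the generalized Gronwall inequality: as the kernel $\lambda k_\alpha$ is nonnegative, iterating $\varphi\le g+\lambda k_\alpha*\varphi$ yields $\varphi\le\sum_{j=0}^{n-1}(\lambda k_\alpha*)^j g+(\lambda k_\alpha*)^n\varphi$, the last term tends to $0$ and the sum converges to the resolvent of (g), giving the pointwise estimate; the $L^p$ bound is Young's inequality applied to the resolvent kernel, whose $L^1(0,T)$-norm is controlled by $\lambda T E_{\alpha,\alpha}(\lambda T^\alpha)$.

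The real difficulty lies entirely in the second paragraph: making the asymptotic expansion and all of its $\alpha$- and $\beta$-derivative bounds \emph{uniform} across the compact parameter ranges. This is exactly why Lemma \ref{upper-bound} was isolated beforehand --- the admissible window $\varphi\in(\tfrac{\pi\alpha_1}{2},\pi\alpha_0)$, which forces the hypothesis $\alpha_1<2\alpha_0$, guarantees $\cos(\varphi/\alpha)<0$ for every $\alpha\in[\alpha_0,\alpha_1]$, so that a single majorant $g_0(r)$ and its logarithmic enhancements $r^\mu|\ln r|^\nu g_0(r)$ dominate the ray integrals and all their parameter-differentiated versions simultaneously. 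Ensuring that the constants depend only on $\alpha_0,\alpha_1,\beta_0,\beta_1$ and not on the individual $\alpha,\beta$ is the crux; everything else is bookkeeping.
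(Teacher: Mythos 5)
Your proposal is correct and, on the substantive analytic parts, follows essentially the same route as the paper: the bounded-$|z|$ estimates in (a) by termwise comparison of $\Gamma(ka_0+b_0)$ with $\Gamma(k\alpha+\beta)$, and the decay/derivative estimates in (a)--(c) via the Podlubny contour representation over the rays $\arg\zeta=\pm\varphi$ with $\varphi\in(\tfrac{\pi\alpha_1}{2},\pi\alpha_0)$, using the uniform majorant $r^\mu|\ln r|^\nu g_0(r)$ of Lemma \ref{upper-bound} to justify differentiation under the integral and to make all constants depend only on $\alpha_0,\alpha_1,\beta_0,\beta_1$; the residue at $\zeta=z$ producing $\phi_0$ for $z>z_1$, and the compactness argument for (c), are exactly the paper's steps. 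The only genuine divergences are in the parts the paper settles by citation: for (e) the paper quotes Simon's two-sided bound, whereas you rederive it from the large-negative-$z$ asymptotics $E_{\alpha,1}(z)\sim -z^{-1}/\Gamma(1-\alpha)$ plus compactness (for the lower bound on the compact middle range you should invoke strict positivity, which again comes from complete monotonicity); for (g) the paper cites Gorenflo--Kilbas--Mainardi--Rogosin while you give the standard Neumann-series construction using $k_\alpha^{*n}=k_{n\alpha}$; and for (h) you iterate the inequality directly rather than the paper's substitution $\psi=\varphi-\lambda k_\alpha*\varphi$ followed by (g), with Young's inequality in place of H\"older for the $L^p$ bound. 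These variants are all sound and make the lemma more self-contained, at the cost of a little extra bookkeeping.
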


\begin{proof}
\noindent {\bf Proof of (a):}
The first inequality of part (a) can  be verified directly from the definition of the Mittag-Leffler function. In fact, we have
$$   |E_{\alpha,\beta}(z)|\leq\sum_{k\leq 2/{a_0}}\frac{|z|^k}{\Gamma(k\alpha+\beta)}+\sum_{k> 2/{a_0}}\frac{|z|^k}{\Gamma(k\alpha+\beta)}:=E_1+E_2.$$
Since $\Gamma(x)$ is increasing as $x>2$, we obtain
$$ E_2\leq \sum_{k> 2/{a_0}}\frac{M^k}{\Gamma(ka_0+b_0)}.$$
For $0\leq k\leq 2/{a_0}$, $a_0\leq\alpha\leq a_0+2$, $b_0\leq\beta\leq b_0+2$ we have
\begin{align*}
\Gamma(k\alpha+\beta)&=\frac{\Gamma(k\alpha+\beta+2)}{(k\alpha+\beta)(k\alpha+\beta+1)}\\
&\geq\frac{\Gamma(ka_0+b_0+2)}{(k\alpha+\beta)(k\alpha+\beta+1)}   \\
&=
\frac{(ka_0+b_0)(ka_0+b_0+1)}{(k\alpha+\beta)(k\alpha+\beta+1)}\Gamma(ka_0+b_0)
\end{align*}
which gives
$$  \Gamma(ka_0+b_0)\leq \frac{(k\alpha+\beta)(k\alpha+\beta+1)}{(ka_0+b_0)(ka_0+b_0+1)}\Gamma(k\alpha+\beta)
\leq C_E\Gamma(k\alpha+\beta), $$
where
$$C_E=\frac{(2(a_0+2)/a_0+b_0+2)(2(a_0+2)/a_0+b_0+3)}{b_0(b_0+1)}.$$
For $0\leq k\leq 2/{a_0}$, $a_0\leq\alpha\leq a_0+2$, $\beta> b_0+2$ we have
$$ \Gamma(ka_0+b_0)\leq C_E\Gamma(ka_0+b_0+2)\leq C_E\Gamma(k\alpha+\beta).$$
So we obtain
$$ \Gamma(ka_0+b_0)\leq C_E\Gamma(k\alpha+\beta)\ \ \ \forall 0\leq k\leq 2/{a_0}, a_0\leq\alpha\leq a_0+2,\beta\geq b_0.  $$
Finally, for
$1\leq k\leq 2/{a_0}$, $a_0+2<\alpha$, $\beta\geq b_0$ we have
$k\alpha+\beta\geq k(a_0+2)+b_0\geq 2$. Hence
$$ \Gamma(ka_0+b_0)\leq C_E\Gamma(k(a_0+2)+b_0)\leq C_E\Gamma(k\alpha+\beta).$$
Combining all cases gives
$$ E_1\leq \sum_{k\leq 2/{a_0}}C_E\frac{M^k}{\Gamma(ka_0+b_0)}.$$
From the estimate for $E_1,E_2$ and the inequality $C_E\geq 1$, we obtain
$$  |E_{\alpha,\beta}(z)|\leq E_1+E_2\leq C_EE_{a_0,b_0}(M),\ \ \ \forall z\in\mathbb{C},|z|\leq M. $$
To prove Part (b) and the second inequality of Part (a) of the lemma, we make of use of some preliminary results.
For $0<\alpha_0<\alpha_1<2$, $\alpha_1<2\alpha_0$  we have
	$$  0<\frac{\pi\alpha_1}{2}<\min\{\pi,\pi\alpha_0\}. $$
	Hence, we can choose $\varphi$ satisfying
	$$0<\frac{\pi\alpha_1}{2}<\varphi<\min\{\pi,\pi\alpha_0\}\leq \pi.$$
Put $\varphi_0=\frac{\pi\alpha_1}{2}$ and choose $0<\rho_0<\rho_1$. For $\varphi\in (\varphi_0,\pi],\rho\in [\rho_0,\rho_1]$,
define the curve
\begin{equation}
\gamma_{\rho,\varphi}=C^-_{\rho,\varphi}\cup C_{\rho, \varphi}\cup C^+_{\rho,\varphi}
\label{gammarhovarphi}
\end{equation}
 where
\begin{eqnarray*}
C^+_{\rho,\varphi}&=&\{re^{i\varphi}:\ r\geq \rho\},\\
C^-_{\rho,\varphi}&=&\{re^{-i\varphi}:\ r\geq \rho\},\\
C_{\rho, \varphi}&=&\{ \rho e^{i\theta}: -\varphi<\theta< \varphi\}.
\end{eqnarray*}
Put
\begin{align*}
I_{1,\rho}(\alpha,\beta,z)&=\frac{1}{2\alpha\pi i}\int_{C^+_{\rho,\varphi}}
\frac{\zeta^{(1-\beta)\alpha}e^{\zeta^{1/\alpha}}}{\zeta-z}d\zeta,\\
I_{2,\rho}(\alpha,\beta, z)&=-\frac{1}{2\alpha\pi i}\int_{C^-_{\rho,\varphi}}
\frac{\zeta^{(1-\beta)\alpha}e^{\zeta^{1/\alpha}}}{\zeta-z}d\zeta,\\
I_{3,\rho}(\alpha,\beta, z)&=\frac{1}{2\alpha\pi i}\int_{C_{\rho,\varphi}}
\frac{\zeta^{(1-\beta)\alpha}e^{\zeta^{1/\alpha}}}{\zeta-z}d\zeta.\\
\end{align*}
For every $z\in \mathbb{R}$ and $|z-\rho|>\rho_0$ we can find a $C=C(\varphi_0,\rho_0,\rho_1)$ such that
\begin{equation}
  \frac{1}{|\zeta-z|}\leq \frac{C}{1+|z|}\ \   {\rm for\ every} \ \zeta\in\gamma_{\rho,\varphi}.
\label{denominator-upper-bound}
  \end{equation}

Using  Lemma \ref{upper-bound}, we can verify directly by the Lebesgue dominated convergence theorem that the functions $I_{j,\rho}$, $j=1,2,3,$ are differentiable with respect to the parameters $\alpha,\beta, z$. Moreover, for $j=1,2,3$ we claim that
\begin{align*}
 |I_{j,\rho}(\alpha,\beta,z)|+
\left|\frac{\partial I_{j,\rho}}{\partial \alpha}(\alpha,\beta,z)\right|
+\left|\frac{\partial I_{j,\rho}}{\partial \beta}(\alpha,\beta,z)\right|&\leq \frac{C}{1+|z|},\\
\left|\frac{\partial I_{j,\rho}}{\partial z}(\alpha,\beta,z)\right|&\leq
\frac{C}{(1+|z|)^2},
 \end{align*}
where $z\in\mathbb{R}$, $z\leq z_0$, $C=C(\varphi_0,\rho_0, \rho_1,\alpha_0,\alpha_1,\beta_0,\beta_1,z_0)$.
Here, to figure out the idea of proving,  we give an outline of the proof for the claim. We note that
	$$  \left|\exp({r^{1/\alpha}e^{\pm i\varphi/\alpha}})\right|=\exp\left(r^{1/\alpha}\cos\left(
	\frac{\varphi}{\alpha}\right) \right) . $$
	We have
	\begin{eqnarray*}
		I_{1,\rho}(\alpha,\beta,z) &=&\frac{e^{i\varphi}}{2\alpha\pi i}
		\int_\rho^\infty
		\frac{
			r^{(1-\beta)\alpha}
			e^{i\varphi(1-\beta)\alpha}
			\exp(r^{1/\alpha} e^{i\varphi/\alpha})
		}
		{re^{i\varphi}-z}dr,\\
		I_{2,\rho}(\alpha,\beta,z)&= &-\frac{e^{-i\varphi}}{2\alpha\pi i}
		\int_\rho^\infty
		\frac{
			r^{(1-\beta)\alpha}
			e^{-i\varphi(1-\beta)\alpha}
			\exp(r^{1/\alpha} e^{-i\varphi/\alpha})
		}
		{re^{i\varphi}-z}dr,\\
		I_{3,\rho}(\alpha,\beta,z)&= &\frac{1}{2\alpha\pi }
		\int_{-\varphi}^\varphi
		\frac {\rho^{(1-\beta)\alpha}e^{i(1-\beta)\alpha\theta}\exp(\rho e^{i\theta/\alpha})}
		{ \rho e^{i\theta}-z} \rho e^{i\theta}d\theta.
	\end{eqnarray*}
	Now, we have
	$$|I_1(\alpha,\beta,z)|\leq \frac{1}{2\alpha_0\pi }
	\int_\rho^\infty
	\frac{
		r^{(1-\beta)\alpha}
		g_0(r)
	}
	{|re^{i\varphi}-z|}dr.$$
	Using Lemma \ref{upper-bound} and (\ref{denominator-upper-bound}), we obtain
	$$|I_{1,\rho}(\alpha,\beta,z)|\leq \frac{C}{2\alpha_0\pi }
	\int_\rho^\infty
	\frac{
		r^{(1+|\beta_0|)\alpha_1}
		g_0(r)
	}
	{1+|z|}dr.$$
	It follows that
	$$|I_{1,\rho}(\alpha,\beta,z)|\leq\frac{C}{1+|z|}\ \ \ \ {\rm for }\ z\in \mathbb{R}.$$
	Similarly
	$$|I_{2,\rho}(\alpha,\beta,z)|\leq\frac{C}{1+|z|}\ \ \ \ {\rm for }\ z\in \mathbb{R}.$$
	Now, we estimate $I_{3,\rho}(\alpha,\beta,z)$. We have
	$$ |I_{3,\rho}(\alpha,\beta,z)|\leq \frac{\rho}{2\alpha\pi }
	\int_{-\varphi}^\varphi
	\frac {\rho^{(1-\beta)\alpha}\exp\left(\rho^{1/\alpha}\cos\left(\frac{\theta}{\alpha}\right)\right)}
	{ |\rho e^{i\theta}-z|} d\theta\leq
	\frac{1}{2\alpha\pi }
	\int_{-\varphi}^\varphi
	\frac {e}
	{ |\rho e^{i\theta}-z|} d\theta.
	$$
	From (\ref{denominator-upper-bound}), we have therefore
	$$ |I_{3,\rho}(\alpha,\beta,z)|\leq \frac{C}{1+|z|}\ \ \ \ {\rm for\ every}\ z\in\mathbb{R}, |z-\rho|\geq\epsilon_0>0. $$
	Now we consider the derivatives of $I_{j,\rho}$, $j=1,2,3$. We first consider $I_{1,\rho}(\alpha,\beta,z)$. Put the integrand of $I_1(\alpha,\beta)$ by
	$$  F(\alpha,\beta,r,z)=\frac{
		r^{(1-\beta)\alpha}
		e^{i\varphi(1-\beta)\alpha}
		\exp(r^{1/\alpha} e^{i\varphi/\alpha})
	}
	{re^{i\varphi}-z}.$$
	We have
	\begin{eqnarray*}
		(re^{i\varphi}-z)\frac{\partial F}{\partial \alpha}(\alpha,\beta,r,z)&=&
		r^{(1-\beta)\alpha}(1-\beta)
		e^{i\varphi(1-\beta)\alpha}
		\exp(r^{1/\alpha} e^{i\varphi/\alpha})\ln r \\
		& &+
		r^{(1-\beta)\alpha}
		i\varphi(1-\beta)e^{i\varphi(1-\beta)\alpha}
		\exp(r^{1/\alpha} e^{i\varphi/\alpha})\\
		& &+
		r^{(1-\beta)\alpha}
		e^{i\varphi(1-\beta)\alpha}
		\left(-\frac{1}{\alpha^2}\right)(r^{1/\alpha} e^{i\varphi/\alpha}\ln r+r^{1/\alpha}i\varphi e^{i\varphi/\alpha} )\exp(r^{1/\alpha} e^{i\varphi/\alpha}).
	\end{eqnarray*}
	Using Lemma \ref{upper-bound}, we get after some rearrangements
	$$ (1+|z|)\left|\frac{\partial F}{\partial \alpha}(\alpha,\beta,r,z)\right|\leq C(1+\ln r)r^{1+|\beta_0|\alpha_1}g_0(r)
	\ \ \ {\rm for}\ r\geq 1.  $$
	As mentioned in Lemma \ref{upper-bound},  the function in the left hand side of the latter inequality is Lebesgue integrable on $[1,\infty)$. Hence,
	the Lebesgue dominated convergence theorem gives
	$$ \frac{\partial I_{1,\rho}}{\partial\alpha}(\alpha,\beta,z)=\frac{e^{i\varphi}}{2\alpha\pi i}\int_1^\infty
	\frac{\partial F}{\partial \alpha}(\alpha,\beta,r,z)dr-\frac{e^{i\varphi}}{2\alpha^2\pi i}\int_1^\infty
	F(\alpha,\beta,r,z)dr$$
	and
	$$ \left|\frac{\partial I_{1,\rho}}{\partial\alpha}(\alpha,\beta,z)\right|\leq \frac{C}{1+|z|}.  $$
	Similarly, we can get
	$$ \left|\frac{\partial I_{1,\rho}}{\partial\beta}(\alpha,\beta,z)\right|\leq \frac{C}{1+|z|},
\ \left|\frac{\partial I_{1,\rho}}{\partial z}(\alpha,\beta,z)\right|\leq \frac{C}{(1+|z|)^2} .  $$
	Using the same argument, we can prove analogous inequality for $I_{2,\rho}(\alpha,\beta,z),I_{3,\rho}(\alpha,\beta,z)$. Combining the inequalities thus obtained, we get the desired results.

 Choosing $\rho>z_0$,  we have in view of Theorem 1.1 in \cite[Chap. 1, page 30]{Polubny}
\begin{equation*}
E_{\alpha,\beta}(z)=\frac{1}{2\alpha\pi i}\int_{\gamma_{\rho,\varphi}}
\frac{\zeta^{(1-\beta)\alpha}e^{\zeta^{1/\alpha}}}{\zeta-z}d\zeta\ \ \ \ \ \ {\rm for\ all}\ z<z_0.
\end{equation*}
It follows that

\begin{equation*}
E_{\alpha,\beta}(z)= I_{1,\rho}(\alpha,\beta,z)+I_{2,\rho}(\alpha,\beta,z)+I_{3,\rho}(\alpha,\beta,z).
\end{equation*}

Combining the inequalities for $I_{1,\rho} (\alpha,\beta,z), I_{2,\rho}(\alpha,\beta,z),I_{3,\rho}(\alpha,\beta,z)$ gives
the stated results. To prove the Lipschitz property, we use the mean value theorem and the proved inequalities to obtain
$$ |E_{\alpha,\beta}(z_1)-E_{\alpha,\beta}(z_2)|\leq \sup_{z\leq z_0}\Big|\frac{\partial E_{\alpha,\alpha}}{\partial z}(z)\Big|\ |z_1-z_2|\leq C|z_1-z_2|.  $$

\noindent {\bf Proof of (b):} For $z\geq z_1$, we choose a number $\rho_1\in (0,z_1)$. We have in view of Theorem 1.1
\cite[Chap. 1, page 30]{Polubny}
\begin{equation*}
 E_{\alpha,\beta}(z)=\frac{1}{\alpha}z^{(1-\beta)\alpha}e^{z^{1/\alpha}}+\frac{1}{2\alpha\pi i}\int_{\gamma_{\rho_1,\varphi}}
\frac{\zeta^{(1-\beta)\alpha}e^{\zeta^{1/\alpha}}}{\zeta-z}d\zeta\ \ \ \ \ \ {\rm for\ all}\ z>z_1.
\end{equation*}
Using the part (a), we obtain (b).

\noindent {\bf Proof of (c):} Put $G(z,\alpha,\beta)=E_{\alpha,\beta}(z)\phi_0(\alpha,\beta,z)^{-1}$.
From Part (b) we have for $z\geq 1$
\begin{eqnarray*}
 |G(z,\alpha,\beta)-1| &\leq &
\frac{C\max\{z^{(1-\beta_0)\alpha},z^{(1-\beta_1)\alpha} \}}{(1+|z|)e^{z^{1/\alpha}}}\\
&\leq & \frac{C(z^{(1-\beta_0)\alpha}+z^{(1-\beta_1)\alpha} \}}{(1+|z|)e^{z^{1/\alpha}}}\\
&\leq & \frac{C(z^{(1-\beta_0)\alpha_0}+z^{(1-\beta_0)\alpha_1}+z^{(1-\beta_1)\alpha_0}+z^{(1-\beta_1)\alpha_1} \}}{(1+|z|)e^{z^{1/\alpha_1}}}:=\psi(z).
\end{eqnarray*}
Since $\lim_{z\to+\infty}\psi(z)=0$, we can find an $M>z_1>0$ independent of $z,\alpha,\beta$ such that $0\leq \psi(z)\leq \frac{1}{2}$ for $z\geq M$. It follows that
$$        \frac{1}{2}\leq G(z,\alpha,\beta)\leq \frac{3}{2}\ \ \ \ {\rm for}\ z\geq M.$$
Now, put $D=[z_1,M]\times [\alpha_0,\alpha_1]\times [\beta_0,\beta_1]$, $c^-=\inf_D G(z,\alpha,\beta)$, $c^+=\sup_D G(z,\alpha,\beta)$. Using compactness argument, we obtain
$$c^-=\min_D G(z,\alpha,\beta)>0,\  c^+=\max_D G(z,\alpha,\beta)>0$$
and
$$   c^-\leq G(z,\alpha,\beta)\leq c^+\ \ \ \ {\rm for}\ (z,\alpha,\beta)\in D. $$
Putting $C^-=\min\{c^-,\frac{1}{2}\}, C^+=\max\{c^+,\frac{3}{2}\}$, we have
$$        C^-\leq G(z,\alpha,\beta)\leq C^+\ \ \ \ {\rm for}\ z\geq z_1.$$
Hence the desired result follows. The case $\beta=1$ is similar (in fact, easier).

\noindent {\bf Proof of (d):} From the definition we have $E_{\alpha,\alpha}(z)\geq 0$ for $z\geq 0$. For $z<0$, the proof can be found in \cite{Miller-Samko} which uses the fact that the Mittag-Leffler function is completely monotonic.

\noindent {\bf Proof of (e):} The  proof can be found in Simon \cite{simon} for the inequality.

\noindent {\bf Proof of (f):}  We have
\begin{equation*}
E_{\alpha,1}(z) = \sum_{k=0}^\infty \frac{z^k}{\Gamma(k\alpha+1)}\quad z\in \mathbb{C}.
\end{equation*}
Hence
\begin{equation*}
\frac{d}{dz}E_{\alpha,1}(z) = \sum_{k=1}^\infty \frac{kz^{k-1}}{k\alpha\Gamma(k\alpha)}=
\frac{1}{\alpha}\sum_{k=1}^\infty \frac{z^{k-1}}{\Gamma(k\alpha)}=\frac{1}{\alpha}E_{\alpha,\alpha}(z).
\end{equation*}

\noindent {\bf Proof of (g):} See \cite[page 63]{Gorenflo-Kilbas-Mainardi-Rogosin}.

\noindent {\bf Proof of (h):} Putting
\begin{equation*}
\psi(t)=\varphi(t)-\frac{\lambda}{\Gamma(\alpha)} \int_0^t\frac{\varphi(s)}{(t-s)^{1-\alpha}}ds,
\end{equation*}
 we obtain $\psi(t)\leq g(t)$. We deduce in view of the part (g)
\begin{eqnarray*}
\varphi(t)&=&\psi(t)+ \lambda\int_0^t E_{\alpha,\alpha}(\lambda(t-s)^\alpha)\psi(s)ds\\
&\leq&g(t)+ \lambda\int_0^t E_{\alpha,\alpha}(\lambda(t-s)^\alpha)g(s)ds.
\end{eqnarray*}
Now, we prove that last inequality of the lemma. The case $p=1$ and $p=\infty$ can be proved easily. Hence we omit it.  We consider the case $1< p <\infty$, $g\in L^p(0,T)$. Putting $q$ such that $\frac{1}{p}+\frac{1}{q}=1$, using H\"older's inequality we can estimate directly
\begin{eqnarray*}
\varphi(t)
&\leq&g(t)+ \lambda\int_0^t E_{\alpha,\alpha}(\lambda(t-s)^\alpha)g(s)ds\\
&\leq& g(t)+\lambda E_{\alpha,\alpha}(\lambda T^\alpha)\int_0^t g(s)ds\\
&\leq& g(t)+\lambda E_{\alpha,\alpha}(\lambda T^\alpha)t^{1/q}\left(\int_0^t |g(s)|^pds\right)^{1/p}\\
&\leq &g(t)+\lambda E_{\alpha,\alpha}(\lambda T^\alpha)T^{1/q}\Vert g\Vert_{L^p(0,T)}.
\end{eqnarray*}
So we have
\begin{eqnarray*}
 \Vert \varphi\Vert_{L^p(0,T)}&\leq &\Vert g\Vert_{L^p(0,T)}+
\lambda E_{\alpha,\alpha}(\lambda T^\alpha)T^{1/q}\Vert g\Vert_{L^p(0,T)}T^{1/p}  \\
&\leq & (1+\lambda T E_{\alpha,\alpha}(\lambda T^\alpha))\Vert g\Vert_{L^p(0,T)}.
\end{eqnarray*}

\end{proof}

We note that the final inequality of the lemma will be used to estimate the solution of fractional problems in many cases. This inequality is an extension  of the Gronwall inequality. Some other  results  of interest for the Mittag-Leffler functions can be found in
\cite{Gorenflo-Kilbas-Mainardi-Rogosin}  or \cite{Liu}.

\section{Continuity of the solutions of some fractional differential equations}

\setcounter{equation}{0}

In this section, we will investigate the continuity of solutions of   a class of
general fractional differential equations with respect to the fractional parameter $\alpha\in (0,1)$. First, we will transform these equations into  general Abel equations and then study  their properties.
The investigation on fractional differential equations with sequential derivatives can be applied directly to
other equations with the Riemann-Liouville, or  the Caputo fractional derivatives.
Hence, we start with the general equation. Let $\sigma_0=0<\sigma_1<...<\sigma_k$ satisfy $0<\sigma_{j}-\sigma_{j-1}\leq 1$, $j=1,...,k$. The fractional differential equation with sequential derivatives  reads as
\begin{equation}
	{\cal D}^{\sigma_k}_t y(t)+\sum_{j=1}^kp_j(t){\cal D}^{\sigma_{k-j}}_t y(t)+p_k(t)y(t)=f(t),\ \ \  0<t\leq T,
	\label{general-FDE}
\end{equation}
subject to the conditions
\begin{equation*}
	\left. {\cal D}^{\sigma_j-1}_ty(t)\right|_{t=0}=b_j,\ \ \ \ \ j=1,...,k.
\end{equation*}
In view of \cite[page 122]{Polubny}, the solution of the equation  ${\cal D}^{\sigma_k}_t y(t)=\psi(t)$  is given by
\begin{equation*}
	y(t)=\sum_{j=1}^kb_j\frac{t^{\sigma_j-1}}{\Gamma(\sigma_j)}+\frac{1}{\Gamma(\sigma_k)}\int_0^t(t-s)^{\sigma_k-1}\psi(s)ds.
\end{equation*}
Using this  equality we can rewrite the equation (\ref{general-FDE}) as
\begin{equation}
	\label{general-Abel-Equation}
	\psi(t)+\int_0^t \frac{K(t,s,\beta, \psi(s))}{(t-s)^{1-\sigma_k}}ds=g(t)
\end{equation}
where $\eta_j:=\sigma_j-\sigma_{j-1}$, $\beta:=(\eta_1,...,\eta_k)$ and
\begin{align*}
	K(t,s,\beta,\psi) &= \left(p_k(t)\frac{(t-s)^{\sigma_k-\eta_k}}{\Gamma(\sigma_k)}+
	\sum_{j=1}^{k-1}p_{k-j}(t)\frac{(t-s)^{\sigma_k-\sigma_j-\eta_k}}{\Gamma(\sigma_k)}\right)\psi,\\
	g(t) &= f(t)-p_k(t)\sum_{j=1}^k\frac{b_jt^{\sigma_j-1}}{\Gamma(\sigma_j)}-
	\sum_{j=1}^{k-1}p_{k-j}(t)\sum_{\ell=j+1}^k\frac{b_\ell t^{\sigma_\ell-\sigma_j-1}}{\Gamma(\sigma_\ell-\sigma_j)}.
\end{align*}
Put $\nu=\min\{\eta_1,...,\eta_k\}$, under mild conditions on the functions $f, $ and $p_k$'s the function $g^*(t)=t^{1-\nu}g(t)$ is a
continuous function on $[0,T]$. By this fact,  we consider our problem in the following  space
\begin{equation*}
	C_\gamma(T,\mathbb{X}) =\Big \{v\in C((0,T],\mathbb{X}):\  \sup_{0<t\leq T} t^\gamma\Vert v(t)\Vert<\infty \Big \}
\end{equation*}
where $0<\gamma<1$, $(\mathbb{X}, ||\cdot||)$ is a Banach space. The space $C_\gamma(T,\mathbb{X})$ is a Banach space with the norm
$$\Vert v\Vert_{C_\gamma}=\sup_{0<t\leq T} t^\gamma\Vert v(t)\Vert. $$
From the definition of the norm, we deduce an inequality which will be used often  in the the rest of our paper
$$    \Vert v(t)\Vert\leq t^{-\gamma}\Vert v\Vert_{C_\gamma(T)}.  $$
For convenience, we denote $C([0,T];\mathbb{X})$ by $C_0(T;\mathbb{X})$.

\subsection{Some properties of solutions of the generalized Abel equations of the second kind}

{We will establish  existence and continuity of  solutions of the genralized Abel equations of second kind. The main results are given in Theorems  \ref{existence-Abel-equation} and \ref{main-theorem-Lipschitz-Abel-equation}.}

For $k\in \mathbb{N}$, we denote by $P$ a compact subset in $\mathbb{R}^k$.
Letting $T>0, 0< \alpha_0<\alpha_1$, we put
\begin{equation*}
	\Delta_T=\Big\{(t,s,\alpha,z):\ 0\leq s\leq t\leq T,\ \alpha_0\leq \alpha\leq\alpha_1,z \in P \Big\}.
\end{equation*}
Assume that
\begin{equation*}
	K
	: \Delta_T \times \mathbb{X}\to \mathbb{X},\  g: (0,T]\times [\alpha_0,\alpha_1]
	\times P\to \mathbb{X}.
\end{equation*}
Suggested by the integral form of the general fractional differential equations, we  consider the nonlinear Abel integral equation of second kind of finding $u_{\alpha,z}: (0,T]\to \mathbb{X}$ that satisfy the following
\begin{equation}\label{firs-equation}
u_{\alpha,z}(t)=g(t,\alpha,z)+ \int_0^t \frac{K(t,s,\alpha,z, u_{\alpha,z}(s))}{(t-s)^{1-\alpha}}ds.
\end{equation}
For every $\alpha\in [\alpha_0,\alpha_1], v\in C_\gamma(T,\mathbb{X})$, we put
\begin{equation*}
	A_{\alpha,z} v(t)=\int_0^t \frac{K(t,s,\alpha,z,v(s))}{(t-s)^{1-\alpha}}ds.
\end{equation*}
To emphasize the dependence  on  the parameter $(\alpha,z)$, we also denote $g(t,\alpha,z)$ by $g_{\alpha,z}(t)$.
With these notations, we can write the  equation \eqref{firs-equation} as
\begin{equation}
	u_{\alpha,z}(t)=g_{\alpha,z}(t)+ A_{\alpha,z}u_{\alpha,z}(t).
	\label{generalized-Abel-equation}
\end{equation}
For $p,q>0$, recalling the definition of Beta function
$ B(p,q)=\int_0^1(1-\theta)^{p-1}\theta^{q-1}d\theta,$
we have
$$
\int_0^t(t-s)^{p-1}s^{q-1}ds=t^{p+q-1}B(p,q)=t^{p+q-1}\frac{\Gamma(p)\Gamma(q)}{\Gamma(p+q)}.
$$
In the next lemma we  establish some estimates of singular integrals.

\begin{lemma}\label{singular-integral}

	\begin{enumerate}[{\bf \upshape(a)}]
		
		\item   For $h\in [0,1]$, $T>0$,  $0<\nu_0\leq\nu_1$, $\eta,\nu\in [\nu_0,\nu_1]$, $p\geq 1$, there exists a constant $C=C(\nu_0,\nu_1, T)$ such that
		\begin{equation*}
			\int_0^ts^{\eta-1}(t-s)^{\nu-1}\Big(|s^h-1|^p+|(t-s)^h-1|^p\Big)ds\leq Cht^{\eta+\nu-1}(1+|\ln t|^p)\ \ \ \ {\rm for\ every}\ 0\leq t\leq T.
		\end{equation*}
		
		\smallskip
		
		\item
		
		Let $t\in(0,T]$. For $v\in C_\gamma(T;\mathbb{X})$,  we have
		$$ \Vert J^\alpha v(t)\Vert\leq  J^\alpha \Vert v(t)\Vert
		\leq t^{\alpha-\gamma}\frac{\Vert v\Vert_{C_\gamma(T;\mathbb{X})}}{\Gamma(\alpha)}B(\alpha,1-\gamma)   $$
		and for $v\in L^p(0,T;X)$, $1\leq p\leq \infty$,
		$$ \Vert J^\alpha v(t)\Vert_{L^p(0,T;\mathbb{X})}\leq \Vert J^\alpha \Vert v(t)\Vert\Vert_{L^p(0,T)}
		\leq\frac{T^\alpha}{\alpha\Gamma(\alpha)}\Vert v\Vert_{L^p(0,T;\mathbb{X})}.$$
		Let $\mu_0, h\in (0,1]$. If $v\in C([0,T];\mathbb{X})$ is H\"{o}lder, i.e., there exist $\kappa>0, \mu\in (\mu_0,1)$ such that
		$\Vert v(t)-v(s)\Vert\leq \kappa |t-s|^\mu$ for every $t,s\in [0,T]$, then there exists a constant $C(\mu_0)$ independent of $v,\mu$
		such that
		$$  \Vert J^hv(s)-v(s)\Vert \leq C(\mu_0)(\Vert v\Vert_{C([0,T];\mathbb{X})}+[v]_{\mu})(h+|s^h-1|)$$
		where $[v]_{\mu}=\sup_{0\leq t\not=s\leq T}\frac{\Vert v(t)-v(s)\Vert}{|t-s|^\mu}$.
		We also have
		\begin{eqnarray*}
			\lim_{h\to 0^+}\Vert J^h w-w\Vert_{L^p(0,T;\mathbb{X})}&=&0,\ \ \ \ \forall p\in [1,\infty), w\in L^p(0,T;\mathbb{X}),\\
			\Vert J^{\alpha'}w-J^\alpha w\Vert_{L^p(0,T;\mathbb{X})} &\leq & C(\alpha_0)\Vert w\Vert_{L^p(0,T;\mathbb{X})}|\alpha'-\alpha|,\ \ \ \forall \alpha_0\in (0,1],\ \alpha',\alpha\in [\alpha_0,1].
		\end{eqnarray*}

		\item
		Let $K\in C(\Delta_T\times\mathbb{X};\mathbb{X})$, $K=K(t,s,\alpha,z,w)$. We assume that
		$K$ is Lipschitz with respect to
		the variable $w\in \mathbb{X}$, i.e., there exists a $\kappa>0$ such that
		\begin{equation}\label{Lipschitz-1}
			\Vert K(t,s,\alpha,z,w_1)-K(t,s,\alpha,z,w_2)\Vert\leq \kappa \Vert w_1-w_2\Vert\ \ \ \ {\rm for\ every\ }w_1,w_2\in \mathbb{X}.
		\end{equation}
		Put $M_0=\sup_{(t,s,\alpha,z)\in\Delta_T}\Vert K(t,s,\alpha,z,0)\Vert$.
		For $0<t\leq T,\ v, v_1,v_2\in C_\gamma(T,\mathbb{X})$ we have $A_\alpha v\in C_\gamma(T,\mathbb{X}).$
		Moreover, we have
		\begin{align*}
			\Vert  A_{\alpha,z} v(t)\Vert &\leq \frac{M_0}{\alpha_0}t^{\alpha} +\kappa t^{\alpha-\gamma}B(\alpha,1-\gamma) \Vert v\Vert_{C_\gamma(T,\mathbb{X})},\\
			\Vert A_{\alpha,z} v_1(t)- A_{\alpha,z} v_2(t)\Vert &\leq
			\kappa t^{\alpha-\gamma} B(\alpha,1-\gamma)  \Vert v_1-v_2 \Vert_{C_\gamma(T,\mathbb{X})}
		\end{align*}
		and
		\begin{equation*}
			\Vert A_{\alpha,z} v_1-A_{\alpha,z}v_2\Vert_{C_\gamma(T,\mathbb{X})}\leq \kappa T^\alpha B(\alpha,1-\gamma)  \Vert v_1-v_2 \Vert_{C_\gamma(T,\mathbb{X})}.
		\end{equation*}
		\item
		Now, let $v\in L^p(0,T;\mathbb{X})$. For $1\leq p<\infty$, we have
		$$  \Vert  A_{\alpha,z} v\Vert_{L^p(0,T;\mathbb{X})} \leq \frac{M_0}{\alpha(p\alpha+1)^{1/p}}T^{\alpha+1/p} +\frac{\kappa T^\alpha}{\alpha}\Vert v\Vert_{L^p(0,T;\mathbb{X})}. $$
		For $p=\infty$, we have
		$$  \Vert  A_{\alpha,z} v\Vert_{L^\infty(0,T;\mathbb{X})} \leq \frac{M_0}{\alpha}T^{\alpha} +\frac{\kappa T^\alpha}{\alpha}\Vert v\Vert_{L^p(0,T;\mathbb{X})}. $$
		For $v_1,v_2\in L^p(0,T;\mathbb{X})$, $1\leq p\leq\infty$, we also have
		$$\Vert A_{\alpha,z} v_1- A_{\alpha,z} v_2\Vert_{L^p(0,T;\mathbb{X})}
		\leq  \frac{\kappa T^\alpha}{\alpha}\Vert v_1-v_2\Vert_{L^p(0,T;\mathbb{X})}.$$
		
		\item
		Let $u_1,u_2, g_1,g_2\in L^p(0,T;\mathbb{X})$ satisfy the equations $u_i=g_i+A_{\alpha,z}u_i$, $i=1,2$.
		Then we have
		$$\Vert u_2-u_1\Vert_{L^p(0,T;\mathbb{X})}\leq (1+\kappa\Gamma(\alpha)TE_{\alpha,\alpha}(\kappa
		\Gamma(\alpha)T^\alpha)) \Vert g_2-g_1\Vert_{L^p(0,T;\mathbb{X})}.$$
	\end{enumerate}	
	
\end{lemma}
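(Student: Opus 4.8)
The plan is to prove the five assertions in the order stated, since each part rests on the estimates built before it. Parts (a) and (b) are the analytic core: sharp bounds on the singular integral $\int_0^t s^{\eta-1}(t-s)^{\nu-1}(\cdots)ds$ and on the fractional integral $J^\alpha$. Parts (c) and (d) transfer these bounds to the nonlinear Abel operator $A_{\alpha,z}$ through the Lipschitz hypothesis (\ref{Lipschitz-1}) on $K$. Part (e) is the capstone stability estimate, which I expect to follow by feeding the pointwise bound produced in (c)/(d) into the generalized Gronwall inequality of Lemma \ref{Mittag-Leffler}(h).

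For part (a) I would start from the elementary estimate $|s^h-1|\le h|\ln s|\max\{1,s^h\}$, obtained by applying the mean value theorem to $h\mapsto s^h$, together with the observation that $h^p\le h$ for $h\in[0,1]$ and $p\ge1$. This replaces the integrand by $h$ times $s^{\eta-1}(t-s)^{\nu-1}|\ln s|^p\max\{1,s^{hp}\}$ (and its reflection under $s\leftrightarrow t-s$). Rescaling $s=t\theta$ extracts the factor $t^{\eta+\nu-1}$; then $|\ln(t\theta)|^p\le 2^{p-1}(|\ln t|^p+|\ln\theta|^p)$ and $\max\{1,(t\theta)^{hp}\}\le\max\{1,T^p\}$ reduce everything to the two Beta-type integrals $\int_0^1\theta^{\eta-1}(1-\theta)^{\nu-1}d\theta$ and $\int_0^1\theta^{\eta-1}(1-\theta)^{\nu-1}|\ln\theta|^p d\theta$, both finite and bounded uniformly for $\eta,\nu\in[\nu_0,\nu_1]$, giving the claimed $Cht^{\eta+\nu-1}(1+|\ln t|^p)$. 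For part (b), the pointwise $C_\gamma$ bound is immediate from $\Vert v(s)\Vert\le s^{-\gamma}\Vert v\Vert_{C_\gamma}$ and $\int_0^t(t-s)^{\alpha-1}s^{-\gamma}ds=t^{\alpha-\gamma}B(\alpha,1-\gamma)$; the $L^p$ bound is Young's convolution inequality with $\Vert k_\alpha\Vert_{L^1(0,T)}=T^\alpha/(\alpha\Gamma(\alpha))$. For the Hölder comparison I would split $J^hv(s)-v(s)=\frac{1}{\Gamma(h)}\int_0^s(s-\tau)^{h-1}(v(\tau)-v(s))d\tau+v(s)\big(s^h/\Gamma(h+1)-1\big)$, controlling the first term by $[v]_\mu$ and the bound $1/\Gamma(h)\le Ch$ on $(0,1]$, and the second by $\Vert v\Vert_C$ and $|s^h/\Gamma(h+1)-1|\le C(|s^h-1|+h)$ (using $\Gamma(h+1)=1+O(h)$, bounded below). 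The $L^p$-continuity $J^hw\to w$ then follows from this estimate on the dense class of continuous functions plus the uniform bound $\Vert J^h\Vert_{L^p\to L^p}\le T^h/\Gamma(h+1)$ by a standard three-$\varepsilon$ argument, and the Lipschitz-in-$\alpha$ bound from Young's inequality with $\Vert k_{\alpha'}-k_\alpha\Vert_{L^1}\le|\alpha'-\alpha|\sup_\xi\int_0^T|\partial_\alpha k_\xi|$, the last supremum being finite for $\alpha\ge\alpha_0$.

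Parts (c) and (d) then follow mechanically from $\Vert K(t,s,\alpha,z,v(s))\Vert\le M_0+\kappa\Vert v(s)\Vert$: the $M_0$-term and the $\kappa\Vert v\Vert$-term are estimated separately, by the Beta integral of part (b) for the $C_\gamma$ statements and by Young's inequality with $k_\alpha$ for the $L^p$ statements (membership $A_{\alpha,z}v\in C_\gamma$ and continuity in $t$ coming from dominated convergence). The Lipschitz estimates retain only the $\kappa$-term after subtracting.

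For part (e), the capstone, set $w=u_2-u_1$ and $G=g_2-g_1$. Subtracting the two identities $u_i=g_i+A_{\alpha,z}u_i$ and applying (\ref{Lipschitz-1}) gives, for a.e.\ $t$,
\[
\Vert w(t)\Vert\le \Vert G(t)\Vert+\kappa\int_0^t(t-s)^{\alpha-1}\Vert w(s)\Vert\,ds,
\]
and since the last integral equals $\tfrac{\lambda}{\Gamma(\alpha)}\int_0^t (t-s)^{\alpha-1}\Vert w(s)\Vert\,ds$ with $\lambda=\kappa\Gamma(\alpha)\ge0$, this is exactly the hypothesis of the generalized Gronwall inequality in part (h) of Lemma \ref{Mittag-Leffler}, with $\varphi=\Vert w(\cdot)\Vert$ and $g=\Vert G(\cdot)\Vert\in L^p$. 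Its $L^p$ conclusion yields directly $\Vert u_2-u_1\Vert_{L^p}\le\big(1+\kappa\Gamma(\alpha)TE_{\alpha,\alpha}(\kappa\Gamma(\alpha)T^\alpha)\big)\Vert g_2-g_1\Vert_{L^p}$. The main obstacle throughout is part (a) — securing the \emph{linear}-in-$h$ scaling (rather than $h^p$) and the uniform control of the logarithmic Beta integrals — together with the delicate $\Gamma(h)\to\infty$ behaviour in the Hölder comparison of (b); once (a) and (b) are in hand, parts (c)--(e) amount to bookkeeping and a single appeal to Lemma \ref{Mittag-Leffler}(h).
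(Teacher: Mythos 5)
Your proposal is correct and follows essentially the same route as the paper: linear-in-$h$ extraction via the mean value theorem plus rescaling and log-weighted Beta integrals for (a), the Beta-function and Young-type convolution bounds for (b)--(d), and a single appeal to the generalized Gronwall inequality of Lemma \ref{Mittag-Leffler}(h) with $\lambda=\kappa\Gamma(\alpha)$ for (e). The only cosmetic divergence is in the H\"older comparison of (b), where you split $J^hv-v$ directly into $\frac{1}{\Gamma(h)}\int_0^s(s-\tau)^{h-1}(v(\tau)-v(s))\,d\tau$ plus a remainder controlled by $|s^h/\Gamma(h+1)-1|$, whereas the paper integrates by parts and estimates the averaged increment $e(\tau)$; both yield the same bound with the same use of $1/\Gamma(h)=O(h)$.
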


\begin{proof}
	{\bf Proof of  (a):} The proof follows as
	\begin{eqnarray*}
		\lefteqn{\int_0^ts^{\eta-1}(t-s)^{\nu-1}(|s^h-1|^p+|(t-s)^h-1|^p)ds}\\
		&=& \int_0^ts^{\eta-1}(t-s)^{\nu-1}
		\left(\left|\int_0^hs^\mu\ln s d\mu\right|^p+\left|\int_0^h(t-s)^\mu\ln (t-s)d\mu\right|^p\right)ds\\
		&\leq& \int_0^ts^{\eta-1}(t-s)^{\nu-1}
		\left(\int_0^hs^\mu|\ln s| d\mu+\int_0^h(t-s)^\mu|\ln (t-s)|d\mu\right)^pds\\
		&\leq& C_0h\int_0^ts^{\eta-1}(t-s)^{\nu-1}
		(|\ln s|^p+|\ln (t-s)|^p)ds\\
		&\leq &C_0t^{\eta+\nu-1}h\int_0^1\theta^{\eta-1}(1-\theta)^{\nu-1}
		(|\ln \theta|^p+|\ln (1-\theta)|^p+2|\ln t|^p)d \theta\\
		&\leq & Cht^{\eta+\nu-1}(1+|\ln t|^p).
	\end{eqnarray*}
	
	{\bf Proof of  (b):} We verify the inequalities in Part (b). We have
	\begin{eqnarray*}
		\Vert J^\alpha v(t) \Vert &\leq&  J^\alpha \Vert v(t) \Vert\\
		&=&\frac{1}{\Gamma(\alpha)}\int_0^t(t-s)^{\alpha-1}s^{-\gamma}s^\gamma\Vert v(s)\Vert ds\\
		&\leq & \frac{\Vert v\Vert_{C_\gamma(T;\mathbb{X})}}{\Gamma(\alpha)}\int_0^t(t-s)^{\alpha-1}s^{(1-\gamma)-1} ds\\
		&=& \frac{\Vert v\Vert_{C_\gamma(T;\mathbb{X})}}{\Gamma(\alpha)}t^{\alpha-\gamma}B(\alpha, 1-\gamma).
	\end{eqnarray*}
	Now, if $v\in L^p(0,T;\mathbb{X})$, since we can prove the cases $p=1$ and $p=\infty$ easily, we only consider the case $1<p<\infty$. Let $q$ be such that $\frac{1}{p}+\frac{1}{q}=1$, we get
	\begin{eqnarray*}
		\Gamma(\alpha)\Vert \left(J^\alpha \Vert v(t)\Vert\right)\Vert_{L^p(0,T)} \leq  \int_0^t
		\frac{ds}{(t-s)^{1-\alpha}}
		\Vert v\Vert_{L^p(0,T;\mathbb{X})}
		\leq  \frac{T^{\alpha}}{\alpha}
		\Vert v\Vert_{L^p(0,T;\mathbb{X})}.
	\end{eqnarray*}
	Now, we consider the limit of $J^hv$ as $h\to 0^+$. Put $I(s)=\Vert J^hv(s)-v(s)\Vert$ and
	$e(\tau)=\frac{1}{\tau}\int_{s-\tau}^s v(\theta)d\theta-v(s)$.
	Using integration by parts we can write
	\begin{eqnarray*}
		I(s)&=&\frac{1}{\Gamma(h)}\int_0^s(s-\tau)^{h-1}v(\tau)d\tau-v(s) \\
		&=& \frac{1}{\Gamma(h)}s^{h-1}\int_0^s v(\tau)d\tau+ \frac{1-h}{\Gamma(h)}
		\int_0^s\tau^{h-2}\int_{s-\tau}^s v(\theta)d\theta d\tau-v(s)\\
		&=& \frac{1}{\Gamma(h)}s^{h-1}\int_0^s v(\tau)d\tau+\frac{1-h}{\Gamma(h)}
		\int_0^s \tau^{h-1}e(\tau)d\tau\\
		& &+ \left(\frac{1-h}{\Gamma(1+h)}s^h-1\right) v(s).
	\end{eqnarray*}
	{Next we estimate $e(\tau)$. Letting $\theta=s-\eta \tau$ gives
	\begin{eqnarray*}
		\Vert e(\tau)\Vert&=& \left\Vert\int_0^1 \left(v(s-\eta \tau)-v(s)\right) d\eta\right\Vert\\
		&\leq &[v]_\mu\int_0^1| s-(s-\tau\eta) |^{\mu} d\eta\leq[v]_\mu\tau^{\mu}.
	\end{eqnarray*}
}
	Therefore
	\begin{eqnarray*}
		\Vert I(s)\Vert
		&\leq& Ch\Vert v\Vert_{C([0,T];\mathbb{X})}+C \frac{[v]_\mu(1-h)}{\Gamma(h)}
		\int_0^s \tau^{h-1+\mu}d\tau+ \left(\frac{1-h}{\Gamma(1+h)}s^h-1\right) \Vert v(s)\Vert.
	\end{eqnarray*}
	By direct computation, we get
	\begin{equation*}
		\Vert I(s)\Vert\leq C\left(h+|1-s^h|\right)(\Vert v\Vert_{C([0,T];\mathbb{X})}+[v]_\mu).
	\end{equation*}
	Now, we prove the final equality of Part (b). Let $\epsilon>0$, since $C^1([0,T];\mathbb{X})$
	is dense in $L^p(0,T;\mathbb{X})$, we can find a function $v\in C^1([0,T];\mathbb{X})$ such that
	$ \Vert w-v\Vert_{L^p(0,T;\mathbb{X})}\leq \epsilon.$ From the estimate of $I(s)$ we obtain
	\begin{align*}
		\Vert J^h w-w\Vert_{L^p(0,T;\mathbb{X})} &\leq \Vert J^h (w-v)\Vert_{L^p(0,T;\mathbb{X})}+
		\Vert J^h v-v\Vert_{L^p(0,T;\mathbb{X})}+\\
		& \Vert v-w\Vert_{L^p(0,T;\mathbb{X})}\\
		&\leq C\epsilon+C \Vert v\Vert_{C^1([0,T];\mathbb{X})}\left(h+\left(\int_0^T |1-s^h|^pds
		\right)^{1/p}\right).
	\end{align*}
	Using Part (a), we obtain the claimed limit. Now we prove the last inequality of Part (b).
	We have in view of Part (a)
	\begin{align*}
		\Vert J^{\alpha'} w-J^\alpha w\Vert_{L^p(0,T;\mathbb{X})} &= \left\Vert\int_0^t\left((t-s)^{\alpha'-1}-
		(t-s)^{\alpha-1}\right)w(s)ds\right\Vert_{L^p(0,T;\mathbb{X})}\\
		&\leq \sup_{0<t\leq T}\int_0^t(t-s)^{\alpha-1}\left|
		(t-s)^{\alpha'-\alpha}-1\right|ds \Vert w\Vert_{L^p(0,T;\mathbb{X})}\\
		&\leq C|\alpha'-\alpha| \sup_{0<t\leq T}t^\alpha(1+|\ln t|)\Vert w\Vert_{L^p(0,T;\mathbb{X})}.
	\end{align*}
	Since $\alpha\in (\alpha_0,1]$ we get $\sup_{0<t\leq T}t^\alpha(1+|\ln t|)\leq C(\alpha_0)<\infty$. Hence
	$$  \Vert J^{\alpha'} w-J^\alpha w\Vert_{L^p(0,T;\mathbb{X})}\leq C(\alpha_0)|\alpha'-\alpha| \Vert w\Vert_{L^p(0,T;\mathbb{X})}.   $$
	{\bf Proof of  (c):}
	For $v\in\mathbb{X}$, using equation \eqref{Lipschitz-1} we have
	\begin{equation}
		\Vert K(t,s,\alpha,z,v)\Vert \leq M_0+\kappa \Vert v\Vert.
		\label{K-inequality}
	\end{equation}
	Hence, we obtain in view of Part a) and the inequality $\alpha_0\leq \alpha$ that
	\begin{eqnarray*}
		\Vert A_{\alpha,z} v(t)\Vert&\leq &
		\Gamma(\alpha)(J^\alpha M_0+J^\alpha \kappa\Vert v(t)\Vert)\\
		&\leq & \frac{M_0}{\alpha_0}t^\alpha +\kappa t^{\alpha-\gamma}B(\alpha,1-\gamma) \Vert v\Vert_{C_\gamma(T,\mathbb{X})}.
	\end{eqnarray*}
	Similarly we  obtain the following estimates
	\begin{eqnarray*}
		\Vert A_{\alpha,z} v_1(t)-A_{\alpha,z} v_2(t)\Vert&\leq&\Gamma(\alpha)\kappa J^\alpha \Vert v_1(t)-v_2(t)\Vert
		\leq \kappa t^{\alpha-\gamma}\Vert v\Vert_{C_\gamma(T,\mathbb{X})}B(\alpha,1-\gamma),\\
		\Vert A_{\alpha,\beta} v(t)-A_{\alpha,\beta} (0)\Vert&\leq&\Gamma(\alpha)\kappa J^\alpha \Vert v(t)\Vert
		\leq\kappa t^{\alpha-\gamma}\Vert v\Vert_{C_\gamma(T,\mathbb{X})}B(\alpha,1-\gamma).
	\end{eqnarray*}
	{\bf Proof of  (d):} We have
	\begin{eqnarray*}
		\Vert A_{\alpha,z} v\Vert_{L^p(0,T;\mathbb{X})}&\leq &
		\Gamma(\alpha)\Vert J^\alpha M_0\Vert_{L^p(0,T)}
		+\Gamma(\alpha)\Vert J^\alpha \kappa\Vert v(t)\Vert\Vert_{L^p(0,T;\mathbb{X})}\\
		&\leq & \frac{M_0T^{\alpha+1/p}}{\alpha(p\alpha+1)^{1/p}}+
		\frac{T^{\alpha}}{\alpha}\Vert v\Vert_{L^p(0,T;\mathbb{X})}.
	\end{eqnarray*}
	We also have
	\begin{eqnarray*}
		\Vert A_{\alpha,z} v_1(t)- A_{\alpha,z} v_2(t)\Vert &\leq&
		\kappa \Gamma(\alpha)J^\alpha \Vert v_1(t)-v_2(t)\Vert.
	\end{eqnarray*}
	Therefore, we obtain in view of Part (a)
	$$\Vert A_{\alpha,z} v_1- A_{\alpha,z} v_2\Vert_{L^p(0,T;\mathbb{X})}
	\leq  \frac{\kappa T^\alpha}{\alpha}\Vert v_1-v_2\Vert_{L^p(0,T;\mathbb{X})}.$$
	{\bf Proof of  (e):} We have
	$$  \Vert u_2(t)-u_1(t)\Vert\leq \Vert g_2(t)-g_1(t)\Vert+\kappa\int_0^t\frac{\Vert u_2(s)-u_1(s)\Vert}{(t-s)^{1-\alpha}}.$$
	Applying Lemma \ref{Mittag-Leffler} (h) with $\lambda=\kappa \Gamma(\alpha)$ gives
	$$  \Vert u_2-u_1\Vert_{L^p(0,T;\mathbb{X})}\leq (1+\kappa\Gamma(\alpha)TE_{\alpha,\alpha}(\kappa
	\Gamma(\alpha)T^\alpha)) \Vert g_2-g_1\Vert_{L^p(0,T;\mathbb{X})}. $$
\end{proof}

The following theorem gives the existence    of  solutions of  the nonlinear integral equation  (\ref{generalized-Abel-equation}) in $ C_\gamma(T)$ and $L^p(0,T;\mathbb{X})$. \\

\begin{theorem}\label{existence-Abel-equation}
	Let $0<\alpha_0<\alpha_1$, $\alpha\in [\alpha_0,\alpha_1]$ and $\gamma\in [0,1)$.
	Let $K\in C(\Delta_T\times\mathbb{X};\mathbb{X})$, $K=K(t,s,\alpha,z,v)$. We assume that
	$K$ is a Lipschitz function with respect to
	the variable $v\in \mathbb{X}$, i.e., there exists a $\kappa>0$ such that
	\begin{equation*}
		\Vert K(t,s,\alpha,z,v_1)-K(t,s,\alpha,z,v_2)\Vert\leq \kappa \Vert v_1-v_2\Vert\ \ \ \ {\rm for\ every\ }v_1,v_2\in \mathbb{X}.
	\end{equation*}
	
	\begin{enumerate}[\bf {\upshape(a)}]
		
		\item   If  $g\in C\left([\alpha_0,\alpha_1]\times P; C_\gamma(T,\mathbb{X})\right)$ then the nonlinear equation (\ref{generalized-Abel-equation}) has a unique solution
		$u_{\alpha,z}\in C_\gamma(T)$ such that
		\begin{equation*}
			\Vert u_{\alpha,z}\Vert_{C_\gamma(T,\mathbb{X})}\leq \Gamma(1-\gamma)E_{\alpha,1-\gamma}(\kappa\Gamma(\alpha)T^\alpha) \Vert g_{\alpha,z}^*\Vert_{C_\gamma(T,\mathbb{X})}
		\end{equation*}
		where
		\begin{equation*}
			g_{\alpha,z}^*(t)=g_{\alpha,z}(t)+\int_0^t \frac{K^*(t,s,\alpha,z,0)}{(t-s)^{1-\alpha}}ds.
		\end{equation*}
		
		\smallskip

		\item   If $g_{\alpha,z}\in L^p(0,T;\mathbb{X})$ then the nonlinear equation (\ref{generalized-Abel-equation}) has a unique solution $u_{\alpha,z}\in L^p(0,T;\mathbb{X})$.
		Moreover, we have the following estimate
		\begin{equation*}
			\Vert u_{\alpha,z}\Vert_{L^p(0,T;\mathbb{X})}\leq \Big(1+\kappa T\Gamma(\alpha) E_{\alpha,\alpha}(\kappa\Gamma(\alpha) T^\alpha)\Big)
			\Vert g^*_{\alpha,z}\Vert_{L^p(0,T;\mathbb{X})}.
		\end{equation*}

	\end{enumerate}
	
\end{theorem}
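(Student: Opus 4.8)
The plan is to solve \eqref{generalized-Abel-equation} by Picard iteration, circumventing the fact that $A_{\alpha,z}$ need not be a contraction on all of $[0,T]$ by means of an iterated-kernel estimate that reproduces the Mittag-Leffler bound with the exact constants claimed. First I would split off the part of $A_{\alpha,z}$ that does not depend on the unknown: writing $\tilde A_{\alpha,z}v(t)=\int_0^t (t-s)^{\alpha-1}(K(t,s,\alpha,z,v(s))-K(t,s,\alpha,z,0))\,ds$, equation \eqref{generalized-Abel-equation} becomes $u_{\alpha,z}=g^*_{\alpha,z}+\tilde A_{\alpha,z}u_{\alpha,z}$, where $g^*_{\alpha,z}=g_{\alpha,z}+A_{\alpha,z}0$ is the function in the statement and $\tilde A_{\alpha,z}0=0$. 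By Lemma \ref{singular-integral}(c) one has $g^*_{\alpha,z}\in C_\gamma(T,\mathbb{X})$, while the Lipschitz assumption on $K$ gives the clean bound $\Vert \tilde A_{\alpha,z}v_1(t)-\tilde A_{\alpha,z}v_2(t)\Vert\le \kappa\int_0^t (t-s)^{\alpha-1}\Vert v_1(s)-v_2(s)\Vert\,ds$.

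For part (a) I would set $u^{(0)}=g^*_{\alpha,z}$ and $u^{(n+1)}=g^*_{\alpha,z}+\tilde A_{\alpha,z}u^{(n)}$, and estimate the successive differences $w_n=u^{(n+1)}-u^{(n)}$. Using $\Vert v(s)\Vert\le s^{-\gamma}\Vert v\Vert_{C_\gamma}$ and the Beta-function identity recorded just before Lemma \ref{singular-integral}, an induction on $n$ yields $\Vert w_n(t)\Vert\le (\kappa\Gamma(\alpha))^{n+1}\Gamma(1-\gamma)\,t^{(n+1)\alpha-\gamma}\,\Gamma((n+1)\alpha+1-\gamma)^{-1}\Vert g^*_{\alpha,z}\Vert_{C_\gamma}$; the crucial point is that in the inductive step the Gamma factors telescope, collapsing to the single coefficient $\Gamma((n+1)\alpha+1-\gamma)^{-1}$. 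Multiplying by $t^\gamma$ and using $t\le T$ gives $\Vert w_n\Vert_{C_\gamma}\le \Gamma(1-\gamma)(\kappa\Gamma(\alpha)T^\alpha)^{n+1}\Gamma((n+1)\alpha+1-\gamma)^{-1}\Vert g^*_{\alpha,z}\Vert_{C_\gamma}$. Summing over $n$, the series $\sum_n\Vert w_n\Vert_{C_\gamma}$ converges because it is dominated term-by-term by the Mittag-Leffler series \eqref{Mittag-Leffler-function} for $E_{\alpha,1-\gamma}$; hence $(u^{(n)})$ is Cauchy in the Banach space $C_\gamma(T,\mathbb{X})$ and its limit $u_{\alpha,z}$ solves the equation. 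The telescoping identity $u_{\alpha,z}=g^*_{\alpha,z}+\sum_{n\ge0}w_n$ together with $\sum_{k\ge1}x^k/\Gamma(k\alpha+1-\gamma)=E_{\alpha,1-\gamma}(x)-\Gamma(1-\gamma)^{-1}$ then collapses the estimate to exactly $\Vert u_{\alpha,z}\Vert_{C_\gamma}\le \Gamma(1-\gamma)E_{\alpha,1-\gamma}(\kappa\Gamma(\alpha)T^\alpha)\Vert g^*_{\alpha,z}\Vert_{C_\gamma}$. Uniqueness follows from the same iterated estimate applied to the difference of two solutions (equivalently, from the Gronwall inequality of Lemma \ref{Mittag-Leffler}(h)).

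For part (b) I would run the identical Picard scheme in $L^p(0,T;\mathbb{X})$, now invoking the $L^p$ bounds of Lemma \ref{singular-integral}(d): either an analogous iterated-kernel induction, or a local contraction on a short interval $[0,\delta]$ with $\kappa\delta^\alpha/\alpha<1$ continued across $[0,T]$, shows the iterates converge to a unique $u_{\alpha,z}\in L^p(0,T;\mathbb{X})$. Once existence and uniqueness are secured, the quantitative estimate is immediate from Lemma \ref{singular-integral}(e) applied to the operator $\tilde A_{\alpha,z}$: taking $u_1=0$, $g_1=0$ (admissible since $\tilde A_{\alpha,z}0=0$) and $u_2=u_{\alpha,z}$, $g_2=g^*_{\alpha,z}$ gives $\Vert u_{\alpha,z}\Vert_{L^p}\le(1+\kappa\Gamma(\alpha)TE_{\alpha,\alpha}(\kappa\Gamma(\alpha)T^\alpha))\Vert g^*_{\alpha,z}\Vert_{L^p}$.

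The main obstacle is exactly that $A_{\alpha,z}$ fails to be a contraction for general $T$, so a single application of the Banach fixed-point theorem does not suffice; the heart of the argument is the iterated singular-integral estimate, and the delicate part is arranging the Gamma-function bookkeeping so that the telescoping product yields precisely the factor $1/\Gamma((n+1)\alpha+1-\gamma)$, which is what makes the summed series reproduce $E_{\alpha,1-\gamma}$ (respectively $E_{\alpha,\alpha}$) with the stated constants.
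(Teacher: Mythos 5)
Your part (a) reproduces the paper's argument essentially verbatim: the same splitting $K^*=K-K(\cdot,\cdot,\cdot,\cdot,0)$, the same Picard iteration on $u_{n+1}=g^*_{\alpha,z}+A^*_{\alpha,z}u_n$, the same inductive estimate $\Vert u_{n}(t)-u_{n-1}(t)\Vert\le \Gamma(1-\gamma)(\kappa\Gamma(\alpha))^{n}t^{n\alpha-\gamma}\Gamma(n\alpha+1-\gamma)^{-1}\Vert g^*_{\alpha,z}\Vert_{C_\gamma(T,\mathbb{X})}$ obtained from the Beta-function telescoping, the same summation against the series for $E_{\alpha,1-\gamma}$, and uniqueness via Lemma \ref{Mittag-Leffler}(h); your bookkeeping checks out and yields exactly the stated constant. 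Where you genuinely diverge is part (b): the paper does \emph{not} rerun Picard in $L^p(0,T;\mathbb{X})$, but instead approximates $g^*_{\alpha,z}$ by functions $g_n\in C_c^1([0,T];\mathbb{X})$, invokes part (a) to solve each approximate equation in $C_0(T;\mathbb{X})$, shows the resulting solutions form a Cauchy sequence in $L^p$ via the Gronwall inequality, and passes to the limit using the $L^p$-continuity of $A_{\alpha,z}$ from Lemma \ref{singular-integral}(d). Your alternative---either an iterated-kernel bound $\Vert (A^*_{\alpha,z})^n v_1-(A^*_{\alpha,z})^n v_2\Vert_{L^p}\le (\kappa\Gamma(\alpha))^nT^{n\alpha}\Gamma(n\alpha+1)^{-1}\Vert v_1-v_2\Vert_{L^p}$ making some power of the operator a contraction, or a local contraction continued across $[0,T]$---is equally valid and arguably more self-contained, since it does not route the $L^p$ theory through the weighted-sup-norm space; the paper's approximation route has the mild advantage of reusing part (a) wholesale and of producing solutions that are limits of continuous ones, which it exploits later (e.g.\ in the proof of Theorem \ref{main-theorem-Lipschitz-Abel-equation}(b)). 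Your derivation of the final $L^p$ estimate from Lemma \ref{singular-integral}(e) with $u_1=g_1=0$ is legitimate provided one observes, as you do, that the proof of that lemma only uses the Lipschitz bound on the kernel, which $K^*$ inherits with the same constant $\kappa$; this coincides with how the paper obtains the same bound.
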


\begin{proof}
	{\bf Proof of  (a):}
	Putting $$K^*(t,s,\alpha,z,w)=K(t,s,\alpha,z,w)-K(t,s,\alpha,z,0), $$
	we obtain $K^*(t,s,\alpha,z,0)\equiv 0$ and, for $w_1,w_2\in \mathbb{X}$,
	$$ \Vert K^*(t,s,\alpha,z,w_1)-K^*(t,s,\alpha,z,w_2)\Vert\leq \kappa \Vert w_1-w_2\Vert. $$
	Defining
	$$  A^*_{\alpha,z} u=\int_0^t \frac{K^*(t,s,\alpha,z,u(s))}{(t-s)^{1-\alpha}}ds, $$
	we can rewrite (\ref{generalized-Abel-equation}) in the operator form
	$$  (I-A^*_{\alpha,z})u_{\alpha,z}=g_{\alpha,z}^*. $$
	We verify the uniqueness of solution of the problem.
	Letting $v,w\in C_\gamma(T,\mathbb{X})$ be two solutions of the latter problem, we obtain
	\begin{align*}
		\Vert v(t)-w(t)\Vert &\leq \int_0^t\frac{\Vert K(t,s,\alpha,z,v(s))-K(t,s,\alpha,z,w(s))\Vert}{(t-s)^{1-\alpha}}ds\\
		&\leq \int_0^t\frac{\kappa\Vert v(s)-w(s)\Vert}{(t-s)^{1-\alpha}}ds.
		\label{uniqueness-Abel-inequality}
	\end{align*}
	From Lemma \ref{Mittag-Leffler}, Part (h), we obtain
	$\Vert v(t)-w(t)\Vert=0$, or $v=w$.
	
	Now we prove the existence of solution of the problem.
	Putting $u_0=0,\ u_1=g_{\alpha,z}^*,\ u_{n+1}=g_{\alpha,z}^*+A^*_{\alpha,z}u_n$,
	we claim that  the series $u_n$ converges in $C_\gamma(T,\mathbb{X})$ and its limit is the solution of (\ref{generalized-Abel-equation}).
	In fact, putting $M_1=\kappa\Gamma(\alpha)$, we can prove by induction that
	\begin{equation*}
		\Vert u_{n}(t)-u_{n-1}(t)\Vert\leq \frac{\Vert g^*\Vert_{C_\gamma(T,\mathbb{X})}\Gamma(1-\gamma)M_1^nt^{n\alpha-\gamma}}{\Gamma(n\alpha+1-\gamma)}\ \ \ \ {\rm for}\ n=1,2,....
	\end{equation*}
	
	For $n=1$, we have in view of Lemma \ref{singular-integral}, Part (c),
	\begin{align*}
		\Vert u_1(t)-u_0(t)\Vert&\leq  \int_0^t \frac{\Vert K^*(t,s,\alpha,z, g_{\alpha,\beta}^*(s))\Vert}{(t-s)^{1-\alpha}} ds\\
		&\leq  \kappa \Vert g_{\alpha,z}^*\Vert_{C_\gamma(T,\mathbb{X})}B(\alpha,1-\gamma)t^{\alpha-\gamma}\\
		&\leq \frac{\Vert g_{\alpha,z}^*\Vert_{C_\gamma(T,\mathbb{X})}M_1\Gamma(1-\gamma)t^{\alpha-\gamma}}{\Gamma(n\alpha+1-\gamma)}.
	\end{align*}
	
	Assume that the inequality holds for $n=k$ $(k\geq 1)$, we claim that it holds for $n=k+1$.
	To this end, we note that
	\begin{eqnarray*}
		\Vert  u_{k+1}(t)-u_{k}(t)\Vert&\leq&\int_0^t \frac{\Vert K^*(t,s,\alpha,z,u_{k}(s))-K^*(t,s,\alpha,z,u_{k-1}(s))\Vert}{(t-s)^{1-\alpha}} ds\\
		&\leq& \kappa\int_0^t \frac{\Vert u_{k}(s)-u_{k-1}(s)\Vert}{(t-s)^{1-\alpha}} ds.
	\end{eqnarray*}
	Using the induction assumptions, we deduce
	\begin{eqnarray*}
		\Vert u_{k+1}(t)-u_{k}(t)\Vert  &\leq&
		\frac{\Vert g_{\alpha,z}^*\Vert_{C_\gamma(T,\mathbb{X})}\Gamma(1-\gamma)M_1^{k+1}}{\Gamma(\alpha)\Gamma(k\alpha+1-\gamma)}\int_0^t (t-s)^{\alpha-1}s^{k\alpha+(1-\gamma)-1}ds\\
		&\leq &\ \frac{\Vert g_{\alpha,z}^*\Vert_{C_\gamma(T,\mathbb{X})}\Gamma(1-\gamma)M_1^{k+1}}{\Gamma(\alpha)}\frac{t^{(k+1)\alpha-\gamma} B(\alpha, k\alpha+(1-\gamma))}{\Gamma(k\alpha+1-\gamma)}\\
		&\leq& \frac{\Vert g_{\alpha,z}^*\Vert_{C_\gamma(T,\mathbb{X})}\Gamma(1-\gamma)M_1^{k+1}t^{(k+1)\alpha-\gamma}}
		{\Gamma((k+1)\alpha+1-\gamma)}.
	\end{eqnarray*}
	The induction principle implies that the inequality holds for every $n=1,2,...$
	Hence  the last  inequality implies
	\begin{equation*}
		\left\Vert u_{n}-u_{n-1}\right\Vert_{C_\gamma(T,\mathbb{X})}\leq \frac{\Gamma(1-\gamma)M_1^nT^{n\alpha}}{
			\Gamma(n\alpha+1-\gamma)} ~\Vert g_{\alpha,z}^*\Vert_{C_\gamma(T,\mathbb{X})}.
	\end{equation*}
	Since $$\sum_{k=0}^\infty \frac{\Vert g_{\alpha,z}^*\Vert_{C_\gamma(T,\mathbb{X})}(M_1T^\alpha)^{k}}{\Gamma(k\alpha+1-\gamma)}=\Vert g_{\alpha,z}^*\Vert_{C_\gamma(T,\mathbb{X})}E_{\alpha,1-\gamma}(M_1T^\alpha)<\infty, $$ the Weierstrass theorem implies that $u_n=\sum_{k=1}^{n} (u_{k}-u_{k-1})$ converges in $C_\gamma(T,\mathbb{X})$ to a function $u_{\alpha,z}$. We can verify directly that $u_{\alpha,z}$ is the solution of the Abel equation and that
	\begin{equation*}
		\Vert u_{\alpha,\beta}\Vert_{C_\gamma}\leq \Gamma(1-\gamma)E_{\alpha,1-\gamma}(M_1T^\alpha) \Vert g_{\alpha,\beta}^*\Vert_{C_\gamma(T,\mathbb{X})}.
	\end{equation*}
	
	{\bf Proof of  (b):} We shall use the part (a) and an approximation argument. We choose a sequence $g_n\in C_c^1([0,T];\mathbb{X})$ such that $g_n\to g^*_{\alpha,z}$
	in $L^p(0,T;\mathbb{X})$. From the part (a), there exist a unique solution $u_n\in C_0(T;\mathbb{X})$ such that
	$$  u_n=g_n+A^*_{\alpha,z}u_n.$$
	Moreover, we can find a constant $\epsilon_n>0$ such that $u_n(t)=0$ for $t\in [0,\epsilon_n]$. By direct estimating, we obtain
	\begin{equation*}
		\Vert u_n(t)-u_m(t)\Vert\leq \Vert g_n(t)-g_m(t)\Vert+\kappa\int_0^t\frac{\Vert u_n(s)-u_m(s)\Vert}{(t-s)^{1-\alpha}}ds.
	\end{equation*}
	Using Lemma \ref{Mittag-Leffler}, Part (h), we obtain
	\begin{align}
		\Vert u_n(t)-u_m(t)\Vert&\leq \Vert g_n(t)-g_m(t)\Vert\nn\\
		&+\kappa\Gamma(\alpha)\int_0^t E_{\alpha,\alpha}(\Gamma(\alpha)\kappa(t-s)^{\alpha})\Vert g_n(s)-g_m(s)\Vert ds\nn\\
		& \leq \Vert g_n(t)-g_m(t)\Vert\nn\\
		&+\kappa\Gamma(\alpha)E_{\alpha,\alpha}(\Gamma(\alpha)\kappa T^{\alpha})\int_0^t \Vert g_n(s)-g_m(s)\Vert ds.
	\end{align}
	Calculating directly gives
	\begin{eqnarray*} \Vert u_n(t)-u_m(t)\Vert&\leq & \Vert g_n(t)-g_m(t)\Vert+ \\
		& &\kappa\Gamma(\alpha)E_{\alpha,\alpha}(\Gamma(\alpha)\kappa T^{\alpha})T^{1/q}
		\Vert g_n(t)-g_m(t)\Vert_{L^p(0,T)}.
	\end{eqnarray*}
	Hence,
	\begin{equation*}
		\Vert u_n-u_m\Vert_{L^p(0,T;\mathbb{X})}\leq \Big(1+\kappa T\Gamma(\alpha)E_{\alpha,\alpha}(\Gamma(\alpha)\kappa T^{\alpha})\Big)
		\Vert g_n-g_m\Vert_{L^p(0,T;\mathbb{X})}.
	\end{equation*}
	This implies $\{u_n\}$ is Cauchy in $L^p(0,T;\mathbb{X})$. So it has a limit
	$u_{\alpha,z}\in L^p(0,T;\mathbb{X})$. From Lemma \ref{singular-integral}, Part (d), we have
	$A_{\alpha,z} u_n\to A_{\alpha,z}u_{\alpha,z}$ in $L^p(0,T;\mathbb{X})$. Since $u_n=g^*_{\alpha,z}+A^*_{\alpha,z}u_n$, we
	get
	$$u_{\alpha,z}=g^*_{\alpha,z}+A^*_{\alpha,z}u_{\alpha,z}. $$
	Using the same estimate as in the proof of existence, we obtain
	\begin{equation*}
		\Vert u_{\alpha,z}\Vert_{L^p(0,T;\mathbb{X})}\leq \Big(1+\kappa T\Gamma(\alpha) E_{\alpha,\alpha}(\kappa\Gamma(\alpha) T^\alpha) \Big)
		\Vert g^*_{\alpha,z}\Vert_{L^p(0,T;\mathbb{X})}.
	\end{equation*}

\end{proof}

Before stating and proving the main result, we consider a  compactness result in $C_\gamma(T,\mathbb{X})$.
\begin{lemma}
	\label{C-compact}
	Let $K\in C(\Delta_T\times\mathbb{X};\mathbb{X})$, $K=K(t,s,\alpha,z,v)$. We assume that
	
	{\bf (i)} $K$ is Lipschitz with respect to
	the variable $v\in \mathbb{X}$ as in Theorem \ref{existence-Abel-equation},
	
	{\bf (ii)} $\lim_{\xi\to 0^+}\omega_{K}(\xi,M)=0$ where
	\begin{equation*}
		\omega_K(\xi,M):=\sup_B\Vert K(t_2,s,\alpha,z,v)-K(t_1,s,\alpha,z,v)\Vert
	\end{equation*}
	and
	\begin{equation*}
		B=\Big\{(t_1,t_2,s,\alpha,z,w):\ |t_2-t_1|\leq\xi, 0<s\leq \min\{t_1,t_2\},\alpha\in [\alpha_0,\alpha_1],
		z\in P, w\in \mathbb{X}, \Vert w\Vert\leq M \Big\}.
	\end{equation*}

	Then  the following results hold
	
	\begin{enumerate}[\bf {\upshape(a)}]
		\item
		the set
		\begin{equation*}
			B(\alpha_0,\alpha_1,P,L)=\Big\{A_{\alpha,z} v:\ \alpha\in [\alpha_0,\alpha_1],z\in P,
			v\in C_\gamma(T,\mathbb{X}), \Vert v\Vert_{C_\gamma(T,\mathbb{X})}\leq L \Big\}
		\end{equation*}
		is precompact in $C_\gamma(T,\mathbb{X})$.

		\smallskip
		
		\item
		let $a_n,\alpha\in (0,1), z_n,z\in P$, $\lim_{n\to\infty}a_n=\alpha$, $\lim_{n\to\infty}z_n=z$. Assume that $w_n,w\in C_\gamma(T,\mathbb{X})$ and $\lim_{n\to\infty}w_n= w$ in  $C_\gamma(T,\mathbb{X})$.
		Then $A_{a_n,z_n}w_n\to A_{\alpha,z} w$ in $C_\gamma(T,\mathbb{X})$ as $n\to\infty$.
	\end{enumerate}

\end{lemma}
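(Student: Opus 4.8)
The plan is to deduce (a) from the Arzel\`a--Ascoli theorem applied to the rescaled family $\tilde{A}_{\alpha,z}v(t):=t^\gamma A_{\alpha,z}v(t)$, and to prove (b) by a direct three-term decomposition built on the estimates of Lemma \ref{singular-integral}.

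For (a), I would first record that each $\tilde{A}_{\alpha,z}v$ extends continuously to $[0,T]$ with value $0$ at $t=0$: Lemma \ref{singular-integral}(c) gives
\[
t^\gamma\|A_{\alpha,z}v(t)\|\le \frac{M_0}{\alpha_0}t^{\alpha+\gamma}+\kappa\,t^{\alpha}B(\alpha,1-\gamma)L,
\]
so the family is uniformly bounded in $C([0,T];\mathbb{X})$ and, crucially, uniformly small near $t=0$ (the right-hand side tends to $0$ as $t\to0^+$, uniformly over $\alpha\in[\alpha_0,\alpha_1]$, $z\in P$, $\|v\|_{C_\gamma}\le L$). The heart of the matter is equicontinuity. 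Given $0<t_1<t_2\le T$, I would write $\tilde{A}v(t_2)-\tilde{A}v(t_1)=t_2^\gamma\big(A_{\alpha,z}v(t_2)-A_{\alpha,z}v(t_1)\big)+(t_2^\gamma-t_1^\gamma)A_{\alpha,z}v(t_1)$ and split $A_{\alpha,z}v(t_2)-A_{\alpha,z}v(t_1)$ into the integral over $[t_1,t_2]$, the kernel-difference integral $\int_0^{t_1}K(t_2,\dots)\big[(t_2-s)^{\alpha-1}-(t_1-s)^{\alpha-1}\big]ds$, and the time-increment integral $\int_0^{t_1}(t_1-s)^{\alpha-1}\big[K(t_2,s,\dots,v(s))-K(t_1,s,\dots,v(s))\big]ds$. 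Using the uniform smallness near $0$, it suffices to treat $t_1,t_2$ bounded below by a fixed $\delta_0>0$; the first two integrals are then controlled by $(M_0+\kappa L\delta_0^{-\gamma})(t_2-t_1)^\alpha/\alpha$ together with the bound $\int_0^{t_1}|(t_1-s)^{\alpha-1}-(t_2-s)^{\alpha-1}|\,ds\le (t_2-t_1)^\alpha/\alpha$, while the third integral is where hypothesis (ii) is used: splitting $[0,t_1]=[0,\delta_1]\cup[\delta_1,t_1]$, on $[\delta_1,t_1]$ one has $\|v(s)\|\le L\delta_1^{-\gamma}$ and the integrand is $\le\omega_K(t_2-t_1,L\delta_1^{-\gamma})(t_1-s)^{\alpha-1}$, whereas on $[0,\delta_1]$ the triangle inequality through $w=0$ plus the Lipschitz bound gives an integrand $\le\big(2\kappa Ls^{-\gamma}+\omega_K(t_2-t_1,0)\big)(t_1-s)^{\alpha-1}$, whose integral is $O(\delta_1^{1-\gamma})$ uniformly once $t_1\ge\delta_0$. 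Letting $t_2-t_1\to0$ after $\delta_1\to0$ yields equicontinuity. Arzel\`a--Ascoli then gives precompactness, provided the remaining pointwise condition holds, namely that $\{A_{\alpha,z}v(t_0)\}$ is relatively compact in $\mathbb{X}$ for each fixed $t_0$; this is automatic when $\mathbb{X}$ is finite dimensional and otherwise follows from a compactness property of $K$ in its $\mathbb{X}$-argument, which I would assume in the infinite-dimensional setting.

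For (b), I would split
\[
A_{a_n,z_n}w_n-A_{\alpha,z}w=\big(A_{a_n,z_n}w_n-A_{a_n,z_n}w\big)+\big(A_{a_n,z_n}w-A_{\alpha,z}w\big).
\]
The first term is handled by the Lipschitz estimate of Lemma \ref{singular-integral}(c): its $C_\gamma$-norm is $\le\kappa T^{a_n}B(a_n,1-\gamma)\|w_n-w\|_{C_\gamma}\to0$, since the prefactor stays bounded for $a_n\in[\alpha_0,\alpha_1]$. For the second term, with $w$ now fixed, I would decompose into a kernel-argument part $\int_0^t(t-s)^{a_n-1}\big[K(t,s,a_n,z_n,w(s))-K(t,s,\alpha,z,w(s))\big]ds$ and a kernel-exponent part $\int_0^t K(t,s,\alpha,z,w(s))\big[(t-s)^{a_n-1}-(t-s)^{\alpha-1}\big]ds$. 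The exponent part is bounded, after weighting by $t^\gamma$ and using $\|K\|\le M_0+\kappa Ls^{-\gamma}$, by Lemma \ref{singular-integral}(a) with $h=|a_n-\alpha|$, giving a factor $C|a_n-\alpha|\sup_{0<t\le T}t^\alpha(1+|\ln t|)\to0$. The argument part I would treat by splitting $[0,t]=[0,\delta]\cup[\delta,t]$: on $[\delta,T]$ the set $\{w(s):s\in[\delta,T]\}$ is compact, so uniform continuity of $K$ together with $(a_n,z_n)\to(\alpha,z)$ forces the integrand to $0$ uniformly; on $[0,\delta]$ the $w=0$ triangle trick bounds the integrand by $2\kappa Ls^{-\gamma}(t-s)^{\alpha-1}$, whose weighted integral is $O(\delta^\alpha)$ uniformly in $t$. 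This proves $A_{a_n,z_n}w\to A_{\alpha,z}w$ in $C_\gamma$ and hence (b).

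The principal obstacle throughout is the interaction of the weak Abel singularity $(t-s)^{\alpha-1}$ with the admissible blow-up $\|v(s)\|\le Ls^{-\gamma}$ of $C_\gamma$-functions near $s=0$: a fixed modulus $\omega_K(\xi,M)$ cannot absorb this blow-up directly. The resolution in both parts is the same two-scale device---use the uniform smallness of $t^\gamma A_{\alpha,z}v$ near $t=0$ to discard a neighbourhood of the origin, and on the remaining singular piece $[0,\delta]$ replace the genuine nonlinearity by its value at $w=0$ via the Lipschitz bound, so that the $s^{-\gamma}$ factor is integrated only against the (then harmless) kernel and contributes $O(\delta^{1-\gamma})$ or $O(\delta^\alpha)$. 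Hypothesis (ii) is precisely what upgrades this to uniform $t$-equicontinuity on the complementary region.
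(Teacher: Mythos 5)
Your proposal is essentially sound and, for part (a), follows the same skeleton as the paper: the uniform bound $t^\gamma\Vert A_{\alpha,z}v(t)\Vert\leq \frac{M_0}{\alpha_0}t^{\alpha+\gamma}+\kappa t^{\alpha}B(\alpha,1-\gamma)L$ from Lemma \ref{singular-integral}(c), the three-term splitting of $A_{\alpha,z}v(t_2)-A_{\alpha,z}v(t_1)$ into the $[t_1,t_2]$ tail, the kernel-exponent difference, and the time-increment term controlled by $\omega_K$, and then Arzel\`a--Ascoli. The differences are in the packaging. First, you convert the uniform smallness near $t=0$ into equicontinuity at the origin of the rescaled family $t^\gamma A_{\alpha,z}v$ and apply Arzel\`a--Ascoli once on $[0,T]$, whereas the paper proves precompactness in $C([\delta,T];\mathbb{X})$ for each $\delta$ and then runs a diagonal subsequence argument over $[T/2^k,T]$; the two are equivalent, and yours is shorter. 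Second, your two-scale treatment of the time-increment integral (splitting $[0,t_1]$ at $\delta_1$ and replacing the nonlinearity by its value at $w=0$ on $[0,\delta_1]$) is actually more careful than the paper's, which bounds that term by $\frac{T^\alpha}{\alpha\delta}\omega_K(\xi,\delta^{-\gamma}L)$ and silently ignores the portion $s<\delta$ where $\Vert v(s)\Vert$ may exceed $\delta^{-\gamma}L$. Third, for part (b) you give a direct quantitative proof via the decomposition $(A_{a_n,z_n}w_n-A_{a_n,z_n}w)+(A_{a_n,z_n}w-A_{\alpha,z}w)$, the Lipschitz bound, Lemma \ref{singular-integral}(a) for the exponent change, and uniform continuity of $K$ on the compact set $\{w(s):s\in[\delta,T]\}$; the paper instead argues by contradiction, extracting a convergent subsequence via part (a) and identifying the limit pointwise by dominated convergence after the substitution $s=\theta t$. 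Your route is self-contained (it does not use (a)) and yields an explicit rate in the exponent term; the paper's route is softer but reuses the compactness machinery.

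One point deserves emphasis: your caveat about Arzel\`a--Ascoli is not a formality. For $\mathbb{X}$ infinite dimensional the theorem requires relative compactness of the pointwise value sets $\{A_{\alpha,z}v(t_0)\}$ in $\mathbb{X}$, and without some compactness hypothesis on $K$ in its $\mathbb{X}$-argument the conclusion of part (a) can genuinely fail (take $K(t,s,\alpha,z,v)=v$ and $v_n\equiv e_n$ an orthonormal sequence: the images $J^{\alpha}v_n$ stay uniformly separated in $C_\gamma(T,\mathbb{X})$). The paper invokes Arzel\`a--Ascoli without addressing this, so your explicit flagging of the missing hypothesis is the correct move; just be aware that you are proving a (correctly) strengthened version of the lemma rather than the lemma as literally stated.
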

~\\
\begin{proof}
	{\bf Proof of  (a):} To this end, we shall prove that $B(\alpha_0,\alpha_1,P, L)$ is  equibounded and equicontinuous on $[\delta,T]$ for every $\delta\in (0,T]$.
	For $w\in C_\gamma(T,\mathbb{X}), \Vert w\Vert_{C_\gamma(T,\mathbb{X})}\leq L$, Lemma \ref{singular-integral} part (c) implies
	\begin{align}
		t^\gamma\Vert A_{\alpha,z} w(t)\Vert & \leq \frac{M_0}{\alpha_0}t^{\alpha+\gamma}+\kappa Lt^{\alpha}B(\alpha,1-\gamma)\nn\\
		&\leq  \max\{T^{\alpha_0}, T^{\alpha_1}\}\left(\frac{M_0}{\alpha_0}T^{\gamma}+\kappa L
		\max_{\alpha_0\leq\alpha\leq\alpha_1}B(\alpha,1-\gamma)\right).
		\label{A1-upper-bound}
	\end{align}
	Hence $B(\alpha_0,\alpha_1,P,L)$ is equibounded in $C_\gamma(T,\mathbb{X})$. This implies that
	$B(\alpha_0,\alpha_1,P,L)$ is equibounded on $C[\delta,T]$. Now, we verify that $B(\alpha_0,\alpha_1,P,L)$ is equicontinuous on $[\delta,T]$. For $\xi>0$, $\delta\leq t_1\leq t_2\leq T$, $|t_2-t_1|\leq\xi$ we have
	\begin{equation*}
		A_\alpha v(t_2)-A_\alpha v(t_1) =   J_1+J_2+J_3
	\end{equation*}
	with
	\begin{align*}
		J_1 &=  \int_0^{t_1}( (t_2-s)^{\alpha-1}-(t_1-s)^{\alpha-1}) K(t_2,s,\alpha,z,v(s))ds,\\
		J_2 &=  \int_0^{t_1} (t_1-s)^{\alpha-1} (K(t_2,s,\alpha,z, v(s))-K(t_1,s,\alpha,z,v(s))ds,\\
		J_3 &= \int_{t_1}^{t_2}(t_2-s)^{\alpha-1}K(t_2,s,\alpha,z,v(s))ds.
	\end{align*}
	Using \eqref{K-inequality} we have
	\begin{align*}
		|J_1|&\leq \int_0^{t_1} \left| (t_2-s)^{\alpha-1}-(t_1-s)^{\alpha-1}\right|(M_0+ \kappa Ls^{-\gamma})ds\\
		&\leq  \frac{M_0}{\alpha}(t_2^\alpha-t_1^\alpha)+\kappa L \int_0^{t_1}(t_1-s)^{\alpha-1} s^{-\gamma}ds-\kappa L\int_0^{t_1}(t_2-s)^{\alpha-1} s^{-\gamma}ds.
	\end{align*}
	We have
	\begin{align*}
		|t_2^\alpha-t_1^\alpha| &\leq |t_2-t_1|^\alpha, \ \ \ \ \ \ \ \ \ \ \ 0<\alpha\leq 1, \\
		&\leq \alpha T^{\alpha-1}|t_2-t_1|,  \ \ \ \ \ \alpha>1, t_1,t_2\in [0,T].
	\end{align*}
	Hence, putting $M'_0=M_0\max\{1,\alpha T^{\alpha-1}\}$, $a=\min\{\alpha,1\}$ we obtain
	\begin{align*}
		|J_1|&\leq \frac{M'_0}{\alpha_0}(t_2-t_1)^a+\kappa L (t_1^{\alpha-\gamma}-t_2^{\alpha-\gamma})B(\alpha,1-\gamma)+\kappa L\int_{t_1}^{t_2}(t_2-s)^{\alpha-1} s^{-\gamma}ds\\
		&\leq \frac{M'_0}{\alpha_0}(t_2-t_1)^a+|\alpha-\gamma|\kappa L|t_2-t_1|\delta^{\alpha-\gamma-1} +\frac{\kappa L\delta^{-\gamma}}{\alpha}(t_2-t_1)^\alpha \\
		&\leq \frac{M'_0}{\alpha_0}\xi^a+|\alpha-\gamma|\kappa L\xi\delta^{\alpha-\gamma-1} +\frac{\kappa L\delta^{-\gamma}}{\alpha}\xi^\alpha.
	\end{align*}
	Similarly,  we obtain
	\begin{equation*}
		|J_3|\leq \frac{M_0}{\alpha_0}(t_2-t_1)^\alpha+\kappa L\int_{t_1}^{t_2}(t_2-s)^{\alpha-1}s^{-\gamma}ds\leq
		\left(\frac{M_0+\kappa L\delta^{-\gamma}}{\alpha_0}\right)\xi^\alpha.
	\end{equation*}
	To estimate $J_2$,  we note that $\Vert v(s)\Vert\leq \delta^{-\gamma}\Vert v\Vert_{C_\gamma(T,\mathbb{X})}\leq \delta^{-\gamma}L$
	for $s\geq\delta$, so we have
	\begin{equation}
		|J_2|\leq \frac{T^\alpha}{\alpha\delta}\omega_K(\xi,\delta^{-\gamma}L). \end{equation}
	
	From these estimates, we obtain that $B(\alpha_0,\alpha_1,P,L)$ is equicontinuous in $C([\delta,T];\mathbb{X})$ for every $\delta>0$.
	Using the Arzela-Ascoli theorem, we deduce that $B(\alpha_0,\alpha_1,P,L)$ is precompact in $C([\delta,T];\mathbb{X})$ for every $\delta>0$.
	
	Now, we prove $B(\alpha_0,\alpha_1,P,L)$ is compact in $C_\gamma(T,\mathbb{X})$ by the diagonal argument. Since $B(\alpha_0,\alpha_1,P, L)$ is compact in
	$C\left([\frac{T}{2}, T];\mathbb{X}\right)$, we can find a sequence $A_{\alpha_{1,n}}w_{1,n}$ and a function $v_1\in C\left([\frac{T}{2}, T];\mathbb{X}\right)$ such that
	$$A_{\alpha_{1,n},z_{1,n}}w_{1,n}\rightarrow v_1\ {\rm in}\ C\left(\left[\frac{T}{2}, T\right];\mathbb{X}\right).$$
	In the sequence $A_{\alpha_{1,n}}w_{1,n}$ we can find a subsequence $A_{\alpha_{2,n}}w_{2,n}$ and
	a function $v_2\in C\left(\left[\frac{T}{2^2}, T\right];\mathbb{X}\right)$ such that
	$$A_{\alpha_{2,n},z_{2,n}}w_{2,n}\rightarrow v_2\ {\rm in}\ C\left(\left[\frac{T}{2^2}, T\right];\mathbb{X}\right).$$
	We note that $$A_{\alpha_{2,n},\beta_{2,n}}w_{2,n}\rightarrow v_1\ {\rm in}\ C\left(\left[\frac{T}{2}, T\right];\mathbb{X}\right).$$
	So we have $\left. v_2\right|_{[T/2,T]}=v_1$. By induction we can construct sequences $A_{\alpha_{k,n},z_{k,n}}w_{k,n}$
	and the function $v_k$ defined on $\left[\frac{T}{2^k},T\right]$ such that
	\begin{enumerate}[\bf \upshape(i)]
		\item $A_{\alpha_{k,n},z_{k,n}}w_{k,n}$ is a subsequence of
		$A_{\alpha_{k-1,n},z_{k-1,n}}w_{k-1,n}$.
		\item $A_{\alpha_{k,n},z_{k,n}}w_{k,n}\rightarrow v_k$ in $C\left(
		\left[\frac{T}{2^k}, T\right];\mathbb{X}
		\right)$.
	\end{enumerate}
	From the very last properties, we deduce that $v_k\left|_{\left[\frac{T}{2^{k-1}},T\right]}\right.=v_{k-1}$.
	Therefore, we can define  function  $v(t)=v_k(t)$  for every $\left[\frac{T}{2^k},T\right]$ in a unique way. We prove that
	$v\in C_\gamma(T,\mathbb{X})$ and  $A_{\alpha_{k,k},z_{k,k}}w_{k,k}\to v$ in $C_\gamma(T,\mathbb{X})$. Since
	$v_k\in C\left(\left[\frac{T}{2^k},T\right];\mathbb{X}\right)$,
	$k=1,2,...$, we obtain $v\in C((0,T];\mathbb{X})$. On the other hand,
	the inequality (\ref{A1-upper-bound}) gives
	\begin{equation*}
		t^\gamma \Vert A_{\alpha_{k,k},z_{k,k}}w_{k,k}(t)\Vert\leq
		\max\{t^{\alpha_0}, t^{\alpha_1}\}\left(\frac{M_0}{\alpha_0}T^{\gamma}+\kappa L
		\max_{\alpha_0\leq\alpha\leq\alpha_1}B(\alpha,1-\gamma)\right)
		.
	\end{equation*}
	Letting $k\to\infty$ in the last inequality we obtain
	\begin{equation}
		t^\gamma \Vert v(t)\Vert\leq
		\max\{t^{\alpha_0}, t^{\alpha_1}\}\left(\frac{M_0}{\alpha_0}T^{\gamma}+\kappa L
		\max_{\alpha_0\leq\alpha\leq\alpha_1}B(\alpha,1-\gamma)\right).
		\label{v-upper-bound}
	\end{equation}
	Hence $v\in C_\gamma(T,\mathbb{X})$. We now have to verify that $A_{\alpha_{k,k},z_{k,k}}w_{k,k}\to v$ in $C_\gamma(T,\mathbb{X})$.
	For $\delta>0$, we have
	\begin{eqnarray*}
		\Vert A_{\alpha_{k,k},z_{k,k}}w_{k,k}- v\Vert_{C_\gamma(T,\mathbb{X})}\leq L_1+L_2
	\end{eqnarray*}
	where
	\begin{eqnarray*}
		L_1 &=& \max_{0<t\leq \delta}t^\gamma |A_{\alpha_{k,k},z_{k,k}}w_{k,k}(t)- v(t)|,\\
		L_2 &=& \max_{\delta<t\leq T}t^\gamma |A_{\alpha_{k,k},z_{k,k}}w_{k,k}(t)- v(t)|.
	\end{eqnarray*}
	For $0<\delta<\min\{1,T\}$, in view of (\ref{A1-upper-bound}) and (\ref{v-upper-bound}) we have
	$$  |L_1|\leq
	\delta^{\alpha_0}\left(\frac{M_0}{\alpha_0}T^{\gamma}+\kappa L
	\max_{\alpha_0\leq\alpha\leq\alpha_1}B(\alpha,1-\gamma)\right)
	.$$
	Since $A_{\alpha_{k,k},z_{k,k}}w_{k,k}\to v$ in $C\left([\delta,T];\mathbb{X}\right)$ as $k\to\infty$, we get
	$$\limsup_{k\to\infty}\Vert A_{\alpha_{k,k},z_{k,k}}w_{k,k}- v\Vert_{C_\gamma(T,\mathbb{X})}\leq
	\delta^{\alpha_0}\left(\frac{M_0}{\alpha_0}T^{\gamma}+\kappa L
	\max_{\alpha_0\leq\alpha\leq\alpha_1}B(\alpha,1-\gamma)\right) $$
	for every $\delta\in (0,\min\{1,T\})$. Let $\delta\to 0^+$, we obtain $\limsup_{k\to\infty}\Vert A_{\alpha_{k,k},z_{k,k}}w_{k,k}- v\Vert_{C_\gamma(T,\mathbb{X})}= 0$. This completes the proof of part (a).
	
	{\bf Proof of  (b):} Since $w_n\to w$ in  $C_\gamma(T,\mathbb{X})$, there exists a constant $L$ such that  $\Vert w\Vert_{C_\gamma(T,\mathbb{X})}$,
	$\Vert w_n\Vert_{C_\gamma(T,\mathbb{X})}\leq L $ for every $n=1,2,..$. Choose $\alpha_0,\alpha_1\in (0,1)$ such that $\alpha_0\leq a_n\leq\alpha_1$ for every
	$n=1,2,...$ Hence  we have $A_{a_n,z_n}w_n\in B(\alpha_0,\alpha_1,P,L)$. Assume that
	$A_{a_n,z_n}w_n\not\to A_{\alpha,\beta}w$ in $C_\gamma(T,\mathbb{X})$ as $n\to\infty$.
	From Lemma \ref{C-compact}, we can find  an $\epsilon_0>0$ and
	a subsequence of ($A_{a_n,z_n}w_n$), still denote by the same sequence, which converges to $z\in C_\gamma(T,\mathbb{X})$ and
	$   \Vert A_{a_{n},z_{n}}w_{n}-A_{\alpha,\beta}w\Vert_{C_\gamma(T,\mathbb{X})}\geq\epsilon_0>0.$ Let $n\to\infty$ we obtain
	$\Vert z-A_{\alpha,z}w\Vert_{C_\gamma(T,\mathbb{X})}\geq\epsilon_0>0.$
	We claim that $z=A_{\alpha,z} w$ which is a contradiction. We have
	
	\begin{align*}
		A_{a_{n},z_{n}}w_{n} (t)&= \int_{0}^t \frac{K(t,s,a_n,z_n,w_n(s))}{(t-s)^{1-a_n}}\\
		&= t^{a_n}\int_0^1\frac{K(t,\theta t, a_n,z_n, w_n(\theta t))}{(1-\theta)^{1-a_n}}d\theta.
	\end{align*}
	Fixing $t>0$, we put
	\begin{equation*}
		F_n(\theta)=\frac{K(t,\theta t, a_n,z_n, w_n(\theta t))}{(1-\theta)^{1-a_n}},\
		F(\theta)=\frac{K(t,\theta t, \alpha,z, w(\theta t))}{(1-\theta)^{1-\alpha}}.
	\end{equation*}
	We have
	\begin{align*}
		\Vert F_n(\theta)-F(\theta)\Vert &\leq \frac{M_0+\kappa \Vert w_n(\theta t)\Vert}{(1-\theta)^{1-a_n}}+
		\frac{M_0+\kappa \Vert w(\theta t)\Vert}{(1-\theta)^{1-\alpha}}\\
		&\leq  \frac{2M_0+2\kappa(\theta t)^{-\gamma} L}{(1-\theta)^{1-a_0}}:= g(\theta).
	\end{align*}
	Since
	$g$ is in $L^1(0,T)$ and that $\lim_{n\to\infty}\Vert F_n(\theta)-F(\theta)\Vert=0 $, we can apply
	the dominated convergence theorem of Lebesgue to obtain $\lim_{n\to\infty}\Vert\int_0^1( F_n(\theta)-F(\theta)) d\theta\Vert=0$.
	It follows that
	$z(t)=\lim_{n\to\infty}A_{a_n,z_n} w_n(t)=A_{\alpha,z} w(t)$.
\end{proof}

Now, we state and prove the continuity of the solutions of  {\bf the general nonlinear Abel integral of the second kind} with respect to the fractional parameter $\alpha$.

\begin{theorem}\label{main-theorem-Lipschitz-Abel-equation}
	Suppose that the assumptions of Theorem \ref{existence-Abel-equation} and Lemma \ref{C-compact} hold.
	Let $0<\alpha_0<\alpha_1$, $\gamma\in [0,1)$, $\nu,\mu_1,\ldots, \mu_k\in (0,1]$,
	$\kappa_0,\kappa>0$, $(\alpha,z), (\alpha',z')\in [\alpha_0,\alpha_1]\times P$.
	\begin{enumerate}[\bf \upshape (a)]
		\item{If  $g_{\alpha,z}\in  C_\gamma(T,\mathbb{X})$ for all $(\alpha,z)\in [\alpha_0,\alpha_1]\times P$ and
			\begin{eqnarray*}
				\lim_{(\alpha',z')\to(\alpha,z)}\Vert g_{\alpha',z'}-g_{\alpha,z}\Vert_{C_\gamma(T,\mathbb{X})}&=&0.
			\end{eqnarray*}
			then the nonlinear equation (\ref{generalized-Abel-equation}) has a unique solution
			and we have
			\begin{equation*}
			\lim_{(\alpha',z')\to(\alpha,z)}\Vert u_{\alpha',z'}-u_{\alpha,z}\Vert_{C_\gamma(T,\mathbb{X})}=0.
			\end{equation*}}
		\item{If $g_{\alpha,z}\in L^p(0,T;\mathbb{X})$   for all $(\alpha,z)\in [\alpha_0,\alpha_1]\times P$ and
			\begin{eqnarray*}
				\lim_{(\alpha',z')\to(\alpha,z)}\Vert g_{\alpha',z'}-g_{\alpha,z}\Vert_{L^p(0,T;\mathbb{X})}&=&0.
		\end{eqnarray*}	
		Then
		\begin{equation*}
			\lim_{(\alpha',z')\to(\alpha,z)}\Vert u_{\alpha',z'}-u_{\alpha,z}\Vert_{L^p(0,T;\mathbb{X})}=0.
		\end{equation*}}
		\item{Letting $z=(\beta_1,\ldots,\beta_k)$, $z'=(\beta'_1,\ldots,\beta'_k)\in P$, $\mu=(\mu_1,\ldots,\mu_k)$,
			$\mu_j\in (0,1], j=1,\ldots,k$, we assume that
			\begin{eqnarray*}
				\Vert g_{\alpha',z'}-g_{\alpha,z}\Vert_{C_\gamma(T,\mathbb{X})}&\leq& \kappa_0(|\alpha'-\alpha|^\nu+|z'-z|^\mu),\\
				\Vert K(t,s,\alpha',z',w)-K(t,s,\alpha,z,w)\Vert &\leq &\kappa(|\alpha'-\alpha|^\nu+|z'-z|^\mu)(\Vert w\Vert+1)
			\end{eqnarray*}
			where  $0\leq s\leq t\leq T$, $|z'-z|^\mu:=\sum_{j=1}^k|\beta'_j-\beta_j|^{\mu_j}$. Then there is a $C=C(\alpha_0,\alpha_1,P)$ such that
			\begin{equation*}
			\Vert u_{\alpha',z'}-u_{\alpha,z}\Vert_{C_\gamma(T,\mathbb{X})}\leq C_0(|\alpha'-\alpha|^\nu+|z'-z|^\mu)
			\end{equation*}
			where
			$$ C_0= C(\kappa_0+\kappa+1)^2 E^2_{\alpha_0,1-\gamma}(\kappa\Gamma(\alpha_1)\max\{T^{\alpha_0}, T^{\alpha_1}\})(1+\Vert g_{\alpha,z}\Vert_{C_\gamma(T,\mathbb{X})}).$$
			{ In addition, let $\lambda\geq \nu,\rho_j\geq\mu_j,$ $j\in\overline{1,k}$ and let $(a_n,z_n)$ }
			be random variables satisfying $(a_n,z_n)\in [\alpha_0,\alpha_1]\times P$,
			$z_n=(z_{1n},\ldots, z_{kn})$.
			Then
			$$ \mathbb{E}\Vert u_{a_n,z_n}-u_{\alpha,z}\Vert_{C_\gamma(T,\mathbb{X})}\leq
			C_0\left((\mathbb{E}|a_n-\alpha|^\lambda)^{\nu/\lambda}+
			\sum_{j=1}^k(\mathbb{E}|z_{jn}-\beta_j|^{\rho_j})^{\mu_j/\rho_j}\right).   $$}
		\item{If $K$ is as in part (c) and
			\begin{align*}
			\Vert g_{\alpha',z'}-g_{\alpha,z}\Vert_{L^p(0,T;\mathbb{X})}&\leq \kappa_0(|\alpha'-\alpha|^\nu+|z'-z|^\mu),
			\end{align*}
			then we have
			\begin{equation*}
			\Vert u_{\alpha',z'}-u_{\alpha,z}\Vert_{L^p(0,T;\mathbb{X})}\leq
			C_0(|\alpha'-\alpha|^\nu+|z'-z|^\mu)
			\end{equation*}
			for a constant
			$$  C_0= C(\kappa_0+\kappa+1)^2 E^2_{\alpha_0,\alpha_0}(\kappa\Gamma(\alpha_1)\max\{T^{\alpha_0}, T^{\alpha_1}\})(\Vert g_{\alpha,z}\Vert_{L^p(0,T;\mathbb{X})}+1). $$
			In addition, let $(a_n,z_n)$ be random variables as in (c).
			Then
			$$ \mathbb{E}\Vert u_{a_n,z_n}-u_{\alpha,z}\Vert_{L^p(0,T:\mathbb{X})}\leq
			C_0\left((\mathbb{E}|a_n-\alpha|^\lambda)^{\nu/\lambda}+
			\sum_{j=1}^k(\mathbb{E}|z_{jn}-\beta_j|^{\rho_j})^{\mu_j/\rho_j}\right).   $$
			\item Assume that $K(t,s,\alpha,z,w)$ has the derivative $\frac{\partial K}{\partial \alpha}$
			and the Frechet derivative $DK$ with respect to the variable $w$. Moreover, we assume that $\frac{\partial g}{\partial \alpha}(.,\alpha)
			\in C_\gamma(T,\mathbb{X})$ with $0<\gamma\leq 1$, $\frac{\partial K}{\partial\alpha}\in C(\Delta_T\times \mathbb{X};\mathbb{X})$ and there exists $\omega:[0,1]\to \mathbb{R}$ such that $\lim_{\delta\to 0}\omega(\delta)=0$ and
			$$  \omega_{DK^*}(w, \delta)\leq \omega(\delta)\Vert w\Vert  $$
			where
			$$ \omega_{DK^*}(w, \delta)=\sup_{B_1}\Vert DK^*(t_2,s,\alpha,z,w)-DK^*(t_1,s,\alpha,z,w)\Vert$$
			and
			$$B_1=\{(t_2,t_1,s,\alpha,z):\ 0\leq s\leq t_1\leq t_2, |t_2-t_1|\leq \delta,\alpha\in [\alpha_0,\alpha_1],z\in P\}.$$ Then
			$u_{\alpha,z}$ is differentiable with respect to $\alpha$
			and
			\begin{equation*}
			\left\Vert\frac{\partial u_{\alpha,z}}{\partial \alpha}\right\Vert_{C_\gamma(T,\mathbb{X})}
			\leq  \Gamma(1-\gamma)E_{\alpha,1-\gamma}(MT^\alpha)  \Vert g_{1\alpha,z}\Vert_{C_\gamma(T,\mathbb{X})},
			\end{equation*}
			where
			\begin{align*}
			g_{1\alpha,z}(t)&=\frac{\partial g^*_{\alpha,z}}{\partial \alpha}+
			\int_0^t \frac{\partial K^*}{\partial\alpha}(t,s,\alpha,z,u_{\alpha,z}(s))
			\frac{ds}{(t-s)^{1-\alpha}} \nn\\
			&+\int_0^t \frac{K^*(t,s,\alpha,u_{\alpha,z}(s))\ln(t-s)}{(t-s)^{1-\alpha}}ds.
			\end{align*}}
	\end{enumerate}
\end{theorem}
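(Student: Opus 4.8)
My plan is to treat all five parts through the single fixed-point identity \eqref{generalized-Abel-equation}: since each solution obeys $u_{\alpha,z}=g_{\alpha,z}+A_{\alpha,z}u_{\alpha,z}$, the difference of two solutions solves an Abel equation driven by the difference of the data plus the mismatch between the two operators. The universal engine will be the Mittag-Leffler--Gronwall inequality of Lemma \ref{Mittag-Leffler}(h) together with the iteration estimate from Theorem \ref{existence-Abel-equation}, which controls the solution operator's dependence on the source \emph{uniformly} for $\alpha\in[\alpha_0,\alpha_1]$; because the Mittag-Leffler factors are monotone in their arguments, I can replace every $E_{\alpha,1-\gamma}$ or $E_{\alpha,\alpha}$ by the $\alpha$-independent majorants $E_{\alpha_0,1-\gamma}(\kappa\Gamma(\alpha_1)\max\{T^{\alpha_0},T^{\alpha_1}\})$ and $E_{\alpha_0,\alpha_0}(\cdots)$ that appear in the constant $C_0$.

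For part (a) I would run a compactness-and-contradiction argument. By Theorem \ref{existence-Abel-equation}(a) and the boundedness of $g_{\alpha',z'}$ near $(\alpha,z)$, the family $\{u_{\alpha',z'}\}$ is bounded in $C_\gamma(T,\mathbb{X})$, so $A_{\alpha',z'}u_{\alpha',z'}$ lies in a precompact set by Lemma \ref{C-compact}(a). If $u_{\alpha_n,z_n}\not\to u_{\alpha,z}$ along some sequence $(\alpha_n,z_n)\to(\alpha,z)$, I extract a subsequence with $A_{\alpha_n,z_n}u_{\alpha_n,z_n}\to w$; then $u_{\alpha_n,z_n}=g_{\alpha_n,z_n}+A_{\alpha_n,z_n}u_{\alpha_n,z_n}\to g_{\alpha,z}+w=:u^*$, and Lemma \ref{C-compact}(b) identifies $w=A_{\alpha,z}u^*$, so $u^*$ solves the $(\alpha,z)$-equation and equals $u_{\alpha,z}$ by uniqueness, a contradiction. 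Part (b) I would instead prove directly: subtracting the equations and writing $A_{\alpha',z'}u_{\alpha',z'}-A_{\alpha,z}u_{\alpha,z}$ as a term Lipschitz in the unknown (constant $\kappa$) plus the mismatch $A_{\alpha',z'}u_{\alpha,z}-A_{\alpha,z}u_{\alpha,z}$, then apply Lemma \ref{Mittag-Leffler}(h) with $\lambda=\kappa\Gamma(\alpha')$; the mismatch tends to $0$ in $L^p$ using the continuity of $K$ and the exponent-stability bound $\Vert J^{\alpha'}w-J^\alpha w\Vert_{L^p}\le C(\alpha_0)\,\vert\alpha'-\alpha\vert\,\Vert w\Vert_{L^p}$ from Lemma \ref{singular-integral}(b).

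Parts (c) and (d) are the quantitative versions of this split. Setting $w=u_{\alpha',z'}-u_{\alpha,z}$, it solves $w=h+A^*_{\alpha',z'}w$ with the shifted Abel operator (vanishing at $0$, Lipschitz constant $\kappa$) and effective source $h=(g_{\alpha',z'}-g_{\alpha,z})+(A_{\alpha',z'}u_{\alpha,z}-A_{\alpha,z}u_{\alpha,z})$, so the iteration estimate of Theorem \ref{existence-Abel-equation}(a) gives $\Vert w\Vert_{C_\gamma}\le \Gamma(1-\gamma)E_{\alpha',1-\gamma}(\kappa\Gamma(\alpha')T^{\alpha'})\Vert h\Vert_{C_\gamma}$. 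I bound $\Vert h\Vert_{C_\gamma}$ by splitting the operator mismatch into a kernel-value part, controlled by the Hölder hypothesis on $K$ as $\kappa(\vert\alpha'-\alpha\vert^\nu+\vert z'-z\vert^\mu)(1+\Vert u_{\alpha,z}\Vert_{C_\gamma})$, and an exponent part $\int_0^t K(t,s,\alpha,z,u_{\alpha,z}(s))\big((t-s)^{\alpha'-1}-(t-s)^{\alpha-1}\big)ds$, which Lemma \ref{singular-integral}(a) with $h=\vert\alpha'-\alpha\vert$ bounds by $C\vert\alpha'-\alpha\vert(1+\Vert u_{\alpha,z}\Vert_{C_\gamma})\le C\vert\alpha'-\alpha\vert^\nu(\cdots)$ since $\nu\le 1$ and $\alpha,\alpha'$ range over a bounded interval. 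Inserting the uniform bound $\Vert u_{\alpha,z}\Vert_{C_\gamma}\le\Gamma(1-\gamma)E_{\alpha_0,1-\gamma}(\cdots)\Vert g^*_{\alpha,z}\Vert_{C_\gamma}$ yields the \emph{squared} Mittag-Leffler factor in $C_0$; part (d) is identical with the $L^p$ Gronwall bound of Lemma \ref{Mittag-Leffler}(h) (hence $E_{\alpha_0,\alpha_0}$). The random-variable refinements then follow by taking expectations and applying Jensen's inequality $\mathbb{E}\vert a_n-\alpha\vert^\nu\le(\mathbb{E}\vert a_n-\alpha\vert^\lambda)^{\nu/\lambda}$ (valid because $\lambda\ge\nu$) coordinatewise.

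Part (e) is where I expect the real work to sit. Differentiating the fixed-point equation formally, the candidate $v=\partial_\alpha u_{\alpha,z}$ should solve the \emph{linear} Abel equation $v=g_{1\alpha,z}+\int_0^t (t-s)^{\alpha-1}DK^*(t,s,\alpha,z,u_{\alpha,z}(s))[v(s)]\,ds$, with $g_{1\alpha,z}$ the stated source collecting the derivatives of $g^*$, of the exponent (producing the $\ln(t-s)$ factor) and of $K^*$ in $\alpha$. Since $\Vert DK^*\Vert\le\kappa$ and the hypothesis $\omega_{DK^*}(w,\delta)\le\omega(\delta)\Vert w\Vert$ is precisely condition (ii) of Lemma \ref{C-compact} for the linear kernel $DK^*$, Theorem \ref{existence-Abel-equation}(a) applied to this linear equation produces a unique $v\in C_\gamma(T,\mathbb{X})$ obeying the claimed bound. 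The hard part is to verify that this $v$ is genuinely the derivative, i.e. that the difference quotient $(u_{\alpha',z}-u_{\alpha,z})/(\alpha'-\alpha)$ converges to $v$ in $C_\gamma$. For this I would write an Abel equation for the error $e_{\alpha'}=(u_{\alpha',z}-u_{\alpha,z})/(\alpha'-\alpha)-v$, whose source collects (i) the gap between the exact $\alpha$-increment of $A^*$ and its linearization, handled by dominated convergence using the integrability of $(t-s)^{\alpha-1}\vert\ln(t-s)\vert$ from Lemma \ref{upper-bound} and Lemma \ref{singular-integral}(a) together with the modulus $\omega_{DK^*}$, and (ii) the term $DK^*[\,(u_{\alpha',z}-u_{\alpha,z})/(\alpha'-\alpha)-v\,]$ absorbed by the Gronwall inequality; the established $C_\gamma$-continuity from part (c) makes the source $o(1)$, forcing $\Vert e_{\alpha'}\Vert_{C_\gamma}\to 0$. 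Controlling the logarithmic kernel and the uniform-in-$t$ continuity of $DK^*$ in this error analysis is the main obstacle.
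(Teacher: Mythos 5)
Your proposal is correct in substance and reproduces the paper's architecture for parts (a), (c) and (d) almost exactly: the same compactness-and-contradiction argument via Lemma \ref{C-compact} for (a), and for (c)--(d) the same decomposition of $u_{\alpha',z'}-u_{\alpha,z}$ into a data term, an exponent term $I_1$ handled by Lemma \ref{singular-integral}(a), and a kernel term $I_2$ handled by the H\"older hypothesis on $K$, followed by the iteration bound of Theorem \ref{existence-Abel-equation}(a) and Jensen's inequality for the random case. (The fact that you put the contraction part of the operator at $(\alpha',z')$ and evaluate the mismatch at $u_{\alpha,z}$, whereas the paper does the reverse, is immaterial.) Where you genuinely diverge is in (b) and (e). For (b) the paper mollifies $g^*_{\alpha,z}$, solves with the mollified data, and reduces to part (a) via the stability estimate of Lemma \ref{singular-integral}(e); you instead subtract the two equations directly and send the operator mismatch $A_{\alpha',z'}u_{\alpha,z}-A_{\alpha,z}u_{\alpha,z}$ to zero in $L^p$ by dominated convergence (first in $s$ for fixed $t$, dominated by $(t-s)^{\alpha-1}(M_0+\kappa\Vert u_{\alpha,z}(s)\Vert)$, then in $t$) before applying Lemma \ref{Mittag-Leffler}(h). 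This is legitimate and arguably cleaner --- it needs only pointwise continuity of $K$ in $(\alpha,z)$ and avoids invoking part (a) --- whereas the paper's route trades the measure-theoretic bookkeeping for a soft approximation argument. For (e) both you and the paper identify the same linear Abel equation for the candidate derivative and the same role of the modulus $\omega_{DK^*}$; but the paper proves convergence of the difference quotients $w_{\alpha,h}$ by a compactness argument (uniform boundedness plus equicontinuity of $h^{-1}K_1(t,s,hw)$, as in part (a)), while you propose a direct error equation for $e_{\alpha'}$ with an $o(1)$ source absorbed by Gronwall. Your route is workable but strictly harder: you must show the linearization gap is $o(|\alpha'-\alpha|)$ \emph{uniformly} in $C_\gamma$, which is exactly the estimate the compactness argument is designed to avoid; if you pursue it, the uniform bound on $\Vert w_{\alpha,h}\Vert_{C_\gamma}$ and the $\omega_{DK^*}$ control you mention are indeed the two ingredients that make the dominated-convergence step go through.
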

\begin{proof}
	
	\noindent {\bf Proof of  (a):}
	Assume that $\Vert u_{\alpha',z'}-u_{\alpha,z}\Vert_{C_\gamma(T,\mathbb{X})}\not\to 0$ as $\alpha'\to\alpha$.
	We can choose an $\epsilon_0>0$ and a sequence $u_{a_n,z_n}$, such that
	\begin{equation*}
		\lim_{n\to\infty}(a_n,z_n)=(\alpha,z)\ {\rm and}\  \Vert u_{a_n,z_n}-u_{\alpha,z}\Vert_{C_\gamma(T,\mathbb{X})}\geq \epsilon_0>0.
	\end{equation*}
	From the first part of Theorem \ref{existence-Abel-equation}, we can find a constant  $L$ such that
	$\Vert u_{a_n,z_n}\Vert_{C_\gamma(T,\mathbb{X})}\leq L$ for every $n\to\infty$. It follows that
	$A^*_{a_n,z_n}u_{a_n,z_n}\in B(\alpha_0,\alpha_1,P,L)$. So by Lemma \ref{C-compact} we can find a subsequence, still denoted by $A^*_{a_n,z_n}u_{a_n,z_n}$,
	and an element $x\in C_\gamma(T,\mathbb{X})$ such that $\lim_{n\to\infty}A^*_{a_n,z_n}u_{a_n,z_n}=x\in C_\gamma(T,\mathbb{X})$.
	It follows that
	\begin{equation*}
		u_{a_n,z_n}=g_{a_n,z_n}^*+A^*_{a_n}u_{a_n}\to g_{\alpha,z}^*+x:=u.
	\end{equation*}
	
	From Lemma \ref{C-compact}, we obtain
	$A^*_{a_n,z_n}u_{a_n,z_n}\to A^*_{\alpha,z} u$ as $n\to\infty$. Hence $u=g^*_{\alpha,z}+A^*_{\alpha,z} u$. But
	$u_{\alpha,z}=g_{\alpha,z}^*+A^*_{\alpha,z} u_{\alpha,z}$, hence, by the uniqueness we have $u=u_{\alpha,z}$ and $u_{a_n,z_n}\to u_{\alpha,z}$ in $C_\gamma(T,\mathbb{X})$. This contradicts with the assumption
	$\Vert u_{a_n,z_n}- u_{\alpha,z}\Vert_{C_\gamma(T,\mathbb{X})}\geq \epsilon_0>0$.\\
	
\noindent {\bf Proof of  (b):} Choose $\varphi\in C_c^\infty(-1,1)$, $|\varphi(t)|\leq 1$ for $t\in [-1,1]$. Let $\delta>0$ and $\varphi_\delta(t)=\frac{1}{C_\varphi\delta}\varphi(\frac{t}{\delta})$ with $C_\varphi=\int_{-1}^1\varphi(s)ds$. We approximate $g^*_{\alpha,z}$ by $G_{\delta,\alpha,z}:=\varphi_\delta*g^*_{\alpha,z}\in C([0,T];\mathbb{X})$.
	Let
	$v_{\delta,\alpha,z}$ be the solution of
	$$v_{\delta,\alpha,z}=G_{\delta,\alpha,z}+A^*_{\alpha,z}v_{\delta,\alpha,z}.$$
	We can use the triangle inequality to get
	\begin{align*}
		\Vert u_{\alpha',z'}-u_{\alpha,z}\Vert_{L^p(0,T;\mathbb{X})}
		&\leq \Vert u_{\alpha',z'}-v_{\delta,\alpha',z'}
		\Vert_{L^p(0,T;\mathbb{X})}+\Vert v_{\delta,\alpha',z'}-v_{\delta,\alpha,z}\Vert_{L^p(0,T;\mathbb{X})} \nn\\
		&+\Vert v_{\delta,\alpha,z}-u_{\alpha,z}\Vert_{L^p(0,T;\mathbb{X})}.
	\end{align*}
	Calculating directly, we obtain
	\begin{align*}
		\Vert G_{\delta,\alpha,z}-g_{\alpha,z}\Vert_{L^p(0,T;\mathbb{X})}
		&\leq C\sup_{|\xi|\leq\delta}\Vert g_{\alpha,z}(.+\xi)-g_{\alpha,z}\Vert_{L^p(0,T;\mathbb{X})},\\
		\Vert G_{\delta,\alpha',z'}-G_{\delta,\alpha,z}\Vert_{L^p(0,T;\mathbb{X})} &\leq
		\Vert g_{\alpha',z'}-g_{\alpha,z}\Vert_{L^p(0,T;\mathbb{X})}.
	\end{align*}
	From the last two inequalities, Lemma \ref{singular-integral} (e) gives
	\begin{eqnarray*}
		\Vert v_{\delta,\alpha,z}-u_{\alpha,z}\Vert_{L^p(0,T;\mathbb{X})}&\leq&C(\alpha_0,\alpha_1)C\sup_{|\xi|\leq\delta}\Vert g_{\alpha,z}(.+\xi)-g_{\alpha,z}\Vert_{L^p(0,T;\mathbb{X})},\\
		\Vert u_{\alpha',z'}-v_{\delta,\alpha',z'}\Vert_{L^p(0,T;\mathbb{X})}&\leq&C(\alpha_0,\alpha_1)C\sup_{|\xi|\leq\delta}\Vert g_{\alpha',z'}(.+\xi)-g_{\alpha',z'}\Vert_{L^p(0,T;\mathbb{X})},
	\end{eqnarray*}
	where
	$$C(\alpha_0,\alpha_1)=1+\kappa\Gamma(\alpha_1)TE_{\alpha_0,\alpha_0}(\kappa
	\Gamma(\alpha_1)\max\{T^{\alpha_0},T^{\alpha_1}\}).$$
	Estimating directly gives
	\begin{align*}
		\Vert g_{\alpha',z'}(.+\xi)-g_{\alpha',z'}\Vert_{L^p(0,T;\mathbb{X})}&\leq\Vert g_{\alpha',z'}-g_{\alpha,z}\Vert_{L^p(0,T;\mathbb{X})}+
		\Vert g_{\alpha',z'}(.+\xi)-g_{\alpha,z}(.+\xi)\Vert_{L^p(0,T;\mathbb{X})}\\
		&+\Vert g_{\alpha,z}(.+\xi)-g_{\alpha,z}\Vert_{L^p(0,T;\mathbb{X})}\\
		&\leq 2\Vert g_{\alpha',z'}-g_{\alpha,z}\Vert_{L^p(0,T;\mathbb{X})}+
		\Vert g_{\alpha,z}(.+\xi)-g_{\alpha,z}\Vert_{L^p(0,T;\mathbb{X})}.
	\end{align*}
	Hence
	\begin{eqnarray*}
		\Vert u_{\alpha',z'}-u_{\alpha,z}\Vert_{L^p(0,T;\mathbb{X})}
		&\leq& C(\alpha_0,\alpha_1)C\sup_{|\xi|\leq\delta}\Vert g_{\alpha,z}(.+\xi)-g_{\alpha,z}\Vert_{L^p(0,T;\mathbb{X})}+\\
		& & 2C(\alpha_0,\alpha_1)\Vert g_{\alpha',z'}-g_{\alpha,z}\Vert_{L^p(0,T;\mathbb{X})}+\Vert v_{\delta,\alpha',z'}-v_{\delta,\alpha,z}\Vert_{L^p(0,T;\mathbb{X})}.
	\end{eqnarray*}
	Now, choosing a sequence $(\alpha'_n,z'_n)\to (\alpha,z)$ as $n\to\infty$ and using Part (a), we can obtain
	$$  \limsup_{n\to\infty}\Vert u_{\alpha'_n,z'_n}-u_{\alpha,z}\Vert_{L^p(0,T;\mathbb{X})}
	\leq  C(\alpha_0,\alpha_1)C\sup_{|\xi|\leq\delta}\Vert g_{\alpha,z}(.+\xi)-g_{\alpha,z}\Vert_{L^p(0,T;\mathbb{X})}.$$
	Letting $\delta\to 0^+$ gives
	$$ \limsup_{n\to\infty}\Vert u_{\alpha'_n,z'_n}-u_{\alpha,z}\Vert_{L^p(0,T;\mathbb{X})}=0. $$
	This completes the proof of Part (b).
	
\noindent {\bf Proof of  (c):}	 We have
	\begin{align}
		u_{\alpha',z'}(t)-u_{\alpha,z}(t)&= g_{\alpha',z'}^*-g_{\alpha,z}^*+
		I_1(\alpha',\alpha,z',z)+I_2(\alpha',\alpha,z',z)\nonumber\\ &+\int_0^t\frac{K^*(t,s,\alpha,z,u_{\alpha',z'}(s))-K^*(t,s,\alpha,z,u_{\alpha,z}(s))}{(t-s)^{1-\alpha}}ds
		\label{Holder-integral-equation}
	\end{align}
	where
	\begin{align*}
		I_1(\alpha',\alpha,z',z)
		& =\int_0^t K^*(t,s,\alpha',z',u_{\alpha',z'}(s))((t-s)^{\alpha'-1}-(t-s)^{\alpha-1})ds,\\
		I_2(\alpha',\alpha,z',z)& =
		\int_0^t\frac{K^*(t,s,\alpha',z',u_{\alpha',z'}(s))-K^*(t,s,\alpha,z,u_{\alpha',z'}(s))}{(t-s)^{1-\alpha}}ds.
	\end{align*}
	Now, put $w=u_{\alpha',z'}-u_{\alpha,z}$, $G\equiv G(\alpha',\alpha,z',z)=g^*_{\alpha',z'}-g^*_{\alpha,z}+
	I_1(\alpha',\alpha,z',z)+I_2(\alpha',\alpha,z',z)$ and
	\begin{equation*}
		K_1(t,s,w(s))=K^*(t,s,\alpha,z,u_{\alpha,z}(s)+w(s))-K^*(t,s,\alpha,z,u_{\alpha,z}(s)).
	\end{equation*}
	We can rewrite the equation (\ref{Holder-integral-equation}) as
	\begin{equation*}
		w(t)=G(t)+\int_0^t\frac{K_1(t,s,w(s))ds}{(t-s)^{1-\alpha}}.
	\end{equation*}
	Using Part (a) of Theorem \ref{existence-Abel-equation} we obtain
	\begin{equation}
		\Vert w\Vert_{C_\gamma(T,\mathbb{X})}\leq  \Gamma(1-\gamma)E_{\alpha,1-\gamma}(\kappa\Gamma(\alpha)T^\alpha)\Vert G\Vert_{C_\gamma(T,\mathbb{X})}.
		\label{C-difference-inequality}
	\end{equation}
	We estimate $\Vert G\Vert_{C_\gamma(T,\mathbb{X})}$. From the definition of the term we have
	$$   \Vert G\Vert_{C_\gamma(T,\mathbb{X})}\leq \Vert g^*_{\alpha',z'}-g^*_{\alpha,z}\Vert_{C_\gamma(T,\mathbb{X})}+\Vert I_1(\alpha',\alpha,z',z)\Vert_{C_\gamma(T,\mathbb{X})}+\Vert I_2(\alpha',\alpha,z',z)\Vert_{C_\gamma(T,\mathbb{X})}. $$
	For $\alpha<\alpha'$, applying directly Lemma \ref{singular-integral}, Part {\color{red} (a)} gives
	\begin{align*}
		\Vert I_1(\alpha',\alpha,z',z)\Vert &\leq  \int_0^t \kappa\Vert u_{\alpha',z'}(s)\Vert (t-s)^{\alpha-1}((t-s)^{\alpha'-\alpha}-1)ds\\
		&\leq  \kappa \Vert u_{\alpha',z'}\Vert_{C_\gamma(T,\mathbb{X})}
		\int_0^t s^{-\gamma} (t-s)^{\alpha-1}((t-s)^{\alpha'-\alpha}-1)ds\\
		&\leq  \kappa \Vert u_{\alpha',z'}\Vert_{C_\gamma(T,\mathbb{X})}C|\alpha'-\alpha|t^{\alpha-\gamma}(1+|\ln t|).
	\end{align*}
	It follows that
	$$\Vert I_1(\alpha',\alpha,z',z)\Vert_{C_\gamma(T,\mathbb{X})}\leq C\kappa\Vert u_{\alpha',z'}\Vert_{C_\gamma(T,\mathbb{X})}|\alpha'-\alpha|.$$
	The same inequality hold for $\alpha>\alpha'$. Next we estimate $I_2(\alpha',\alpha,z',z)$ as follows
	\begin{align*}
		\Vert I_2(\alpha',\alpha,z',z)\Vert & \leq
		\int_0^t\kappa(|\alpha'-\alpha|^\nu+|z'-z|^\mu)(\Vert u_{\alpha',z'}(s)\Vert+1)
		(t-s)^{1-\alpha}ds\\
		&\leq \kappa (\Vert u_{\alpha',z'}\Vert_{C_\gamma(T,\mathbb{X})}+1)C(|\alpha'-\alpha|^\nu+|z'-z|^\mu)t^{\alpha-\gamma}(1+|\ln t|).
	\end{align*}
	Hence
	$$ \Vert I_2(\alpha',\alpha,z',z)\Vert_{C_\gamma(T,\mathbb{X})}\leq C\kappa(\Vert u_{\alpha',z'}\Vert_{C_\gamma(T,\mathbb{X})}+1)(|\alpha'-\alpha|^\nu+|z'-z|^\mu). $$
	Finally, we can estimate similarly to obtain
	$$ \Vert g^*_{\alpha',z'}-g^*_{\alpha,z}\Vert_{C_\gamma(T,\mathbb{X})}\leq
	C(\kappa_0 +\kappa)(|\alpha'-\alpha|^\nu+|z'-z|^\mu). $$
	Substituting these estimates for $I_1(\alpha',\alpha,z',z), I_2(\alpha',\alpha,z',z)$, $g^*_{\alpha',z'}-g^*_{\alpha,z}$ into (\ref{C-difference-inequality}) gives
	$$ \Vert w\Vert_{C_\gamma(T,\mathbb{X})}\leq  C(\kappa_0+\kappa)E_{\alpha,1-\gamma}(\kappa\Gamma(\alpha)T^\alpha)(\Vert u_{\alpha',z'}\Vert_{C_\gamma(T,\mathbb{X})}+1)(|\alpha'-\alpha|^\nu+|z'-z|^\mu).
	$$
	Using the part (a) of Theorem \ref{existence-Abel-equation}, we obtain
	\begin{eqnarray*}
		\Vert w\Vert_{C_\gamma(T,\mathbb{X})}&\leq & C(\kappa_0+\kappa+1)^2E^2_{\alpha,1-\gamma}(\kappa\Gamma(\alpha)T^\alpha)(|\alpha'-\alpha|^\nu+|z'-z|^\mu)\\
		&\leq& C_0(|\alpha'-\alpha|^\nu+|z'-z|^\mu)
	\end{eqnarray*}
	where
	$$  C_0=C(\kappa_0+\kappa+1)^2 E^2_{\alpha_0,1-\gamma}(\kappa\Gamma(\alpha_1)\max\{T^{\alpha_0}, T^{\alpha_1}\})(1+\Vert g_{\alpha,z}\Vert_{C_\gamma(T,\mathbb{X})}).$$
	We turn to the case of random order. { Using the last inequality and the Jensen's inequality
$\mathbb{E}|X|\leq \left(\mathbb{E}|X|^p\right)^{1/p}$ for $p\geq 1$, we have
	\begin{eqnarray*}
		\mathbb{E}\Vert u_{a_n,z_n}-u_{\alpha,z}\Vert_{C_\gamma(T,\mathbb{X})}&\leq &
		C_0\mathbb{E}(|a_n-\alpha|^\nu+|z_n-z|^\mu)\\
		&\leq& C_0\left(\mathbb{E}(|a_n-\alpha|^\lambda)^{\nu/\lambda}+C_0\sum_{j=1}^k
		(\mathbb{E}|z_{jn}-\beta_j|^{\rho_j})^{\mu_j/\rho_j}\right).
	\end{eqnarray*}
	}
	
	\noindent {\bf Proof of  (d):}  We have
	\begin{eqnarray*}
		I_1(\alpha',\alpha,z',z)
		& =&\int_0^t K^*(t,s,\alpha',z',u_{\alpha',z'}(s))((t-s)^{\alpha'-1}-(t-s)^{\alpha-1})ds,\\
		I_2(\alpha',\alpha,z',z)& =&
		\int_0^t\frac{K^*(t,s,\alpha',z',u_{\alpha',z'}(s))-K^*(t,s,\alpha,z,u_{\alpha',z'}(s))}{(t-s)^{1-\alpha}}ds.
	\end{eqnarray*}
	For $\alpha<\alpha'$, applying directly Lemma \ref{singular-integral} gives
	\begin{eqnarray*}
		\Vert I_1(\alpha',\alpha,z',z)\Vert_{L^p(0,T;\mathbb{X})} &\leq &
		\kappa\Vert u_{\alpha',z'}(s)\Vert_{L^p(0,T;\mathbb{X})}\int_0^t (t-s)^{\alpha-1}((t-s)^{\alpha'-\alpha}-1)ds\\
		&\leq & \kappa \Vert u_{\alpha',z'}\Vert_{L^p(0,T;\mathbb{X})} C|\alpha'-\alpha|t^{\alpha}(1+|\ln t|).
	\end{eqnarray*}
	The same inequality hold for $\alpha>\alpha'$. Now we estimate $I_2(\alpha',\alpha,z',z)$.
	\begin{eqnarray*}
		\Vert I_2(\alpha',\alpha,z',z)\Vert_{L^p(0,T;\mathbb{X})} & \leq&
		\kappa \Vert u_{\alpha',z'}\Vert_{L^p(0,T;\mathbb{X})} C(|\alpha'-\alpha|^\nu+|z'-z|^\mu)
		t^{\alpha}(1+|\ln t|).
	\end{eqnarray*}
	Now, put $w=u_{\alpha',z'}-u_{\alpha,z}$, $G\equiv G(\alpha',\alpha,z',z)=g^*_{\alpha',z'}-g^*_{\alpha,z}+
	I_1(\alpha',\alpha,z',z)+I_2(\alpha',\alpha,z',z)$ and
	\begin{equation}
		K_1(t,s,w(s))=K^*(t,s,\alpha,z,u_{\alpha,z}(s)+w(s))-K^*(t,s,\alpha,z,u_{\alpha,z}(s)).
\label{Definition-K-1}
	\end{equation}
	We can rewrite the equation (\ref{Holder-integral-equation}) as
	\begin{equation*}
		w(t)=G(t)+\int_0^t\frac{K_1(t,s,w(s))ds}{(t-s)^{1-\alpha}}.
	\end{equation*}
	Using Theorem \ref{existence-Abel-equation}, we obtain
	\begin{equation*}
		\Vert w\Vert_{L^p(0,T;\mathbb{X})}\leq \left(1+\kappa\Gamma(\alpha)\alpha^{-1}T^\alpha E_{\alpha,\alpha}(\kappa\Gamma(\alpha) T^\alpha)\right)
		\Vert G\Vert_{L^p(0,T;\mathbb{X})}.
	\end{equation*}
	From the estimate of $G$ we complete the proof of the first part of (d). The second part can be verified as in the proof of previous part.

	\noindent {\bf Proof of  (e):}   We consider the equation
	\begin{equation}
		w_{\alpha,z}(t)=g_1(t)+
		\int_0^t \frac{DK(t,s,\alpha,z,u_{\alpha,z})w_{\alpha,z}(s)}{(t-s)^{1-\alpha}}ds.
		\label{linear-Abel-equation-derivative}
	\end{equation}

	The equation (\ref{linear-Abel-equation-derivative}) can be seen as the "derivative" of the linear Abel equation
	(\ref{linear-Abel-equation}). Using the first part of the proof we deduce that the equation
	(\ref{linear-Abel-equation-derivative}) has a unique solution $w_{\alpha,z}\in C_\gamma(T,\mathbb{X})$.
	
	Now, we claim that $\frac{\partial u_{\alpha,z}}{\partial \alpha}=w_{\alpha,z}$.
	
	Put $w_{\alpha,h}=\frac{u_{\alpha+h,z}-u_{\alpha,z}}{h}$ we obtain
	$$ w_{\alpha,h}(t)=h^{-1}G(\alpha+h,\alpha,z',z)(t)+\int_0^t\frac{K_1(t,s,hw_{\alpha,h}(s))}{h(t-s)^{1-\alpha}}ds  $$
{ where $K_1(t,s,.)$ is defined in (\ref{Definition-K-1}).}
	We note that $h^{-1}G(\alpha+h,\alpha,z',z)\to g_1$ in $C_\gamma(T,\mathbb{X})$. From the Lipschitz property of
	$K(t,s,\alpha,z,w)$ with respect to the variable $w$, we obtain
	$$  \Vert K_1(t,s,w)\Vert \leq \kappa\Vert w\Vert.$$
	Therefore, we have
	$$  \Vert w_{\alpha,h}\Vert_{C_\gamma(T,\mathbb{X})} \leq \Vert h^{-1}G(\alpha+h,\alpha,z',z) \Vert_{C_\gamma(T,\mathbb{X})}
	\Gamma(1-\gamma)E_{\alpha,1-\gamma}(M_1T^\alpha):=M_2.
	$$
	We verify the equicontinuity  of
	$K_1$ with respect to the variable $t$. We first have
	\begin{eqnarray*}
		h^{-1}K_1(t,s,hw(s))&=&h^{-1}(K^*(t,s,\alpha,z,u_{\alpha,z}(s)+hw(s))-K^*(t,s,\alpha,z,u_{\alpha,z}(s)))\\
		&=& \int_0^1DK^*(t,s,\alpha,z,u_{\alpha,z}(s)+h\theta w(s))w(s)d\theta.
	\end{eqnarray*}
	So we have
	\begin{eqnarray*}
		\lefteqn{\Vert h^{-1}K_1(t_2,s,hw(s))-h^{-1}K_1(t_1,s,hw(s))\Vert}\\
		&\leq &\int_0^1\Vert DK^*(t_2,s,\alpha,z,u_{\alpha,z}(s)+h\theta w(s))-DK^*(t_1,s,\alpha,z,u_{\alpha,z}(s)+h\theta w(s))w(s)\Vert ds\\
		&\leq&\omega(|t_2-t_1|)\Vert u_{\alpha,z}(s)+h\theta w(s)) \Vert.
	\end{eqnarray*}
	So we have the equi-continuity with respect to $t$ of the family $h^{-1}K_1(t,s,hw(s))$.
	As in Part (a), we can use a compactness argument to prove that $w_{\alpha,h}\to w_{\alpha,z}$ in $C_\gamma(T,\mathbb{X})$.
	This completes the proof of our theorem.
\end{proof}

\subsection{Continuity of the solutions of fractional equations with sequential derivatives}
We start by applying the previous results for  the general equation (\ref{general-FDE}). For convenience, we recall that if  ${\cal D}^{\sigma_k}_t y(t)=\psi(t)$
then we can rewrite the equation (\ref{general-FDE}) as the integral equation
(\ref{general-Abel-Equation}).
\begin{theorem}
	Let $\eta_0\in (0,1)$ and $\eta_0\leq \eta_j,\eta'_j\leq 1$,   $0<B_0<B_1$ and put
	$z=(\eta_1,\ldots,\eta_k)$, $z'=(\eta'_1,\ldots,\eta'_k)$, $\sigma_k=\sum_{j=1}^k\eta_k$,
	$\sigma'_k=\sum_{j=1}^k\eta'_k$. Assume that $p_j\in C[0,T]$, $j=1,\ldots,k$,
	$b_j\in (B_0,B_1)$. We denote by $\psi_{z}$
	the solution of (\ref{general-Abel-Equation}) and by $y_z$ the solution of  (\ref{general-FDE}).
	\begin{enumerate}[\bf \upshape (a)]
		\item If  $\gamma\in (1-\eta_0,1)$, $f\in C_\gamma(T;\mathbb{X})$, then equation (\ref{general-Abel-Equation}) has a unique solution $\psi_z\in C_\gamma(T;\mathbb{X})$ and the equation (\ref{general-FDE}) has a unique solution $y_z\in C_\gamma(T;\mathbb{X})$ such that ${\cal D}^{\sigma_k}_t y_z(t)=\psi(t)$.
		Moreover, there exists a constant $C=C(\eta_0, B_0,B_1)$ such that
		$$\Vert {\cal D}^{\sigma'_k}_t y_{z'}-{\cal D}^{\sigma_k}_t y_z\Vert_{C_\gamma(T;\mathbb{X})}\leq C\left(1+\Vert f\Vert_{C_\gamma(T;\mathbb{X})}+\sum_{j=1}^k|b_j|\right)\sum_{j=1}^k |\eta'_j-\eta_j|.$$
		In addition, let $\rho_j\geq 1$, $j=1,\ldots,k$. If $\eta_{jn}$   are random,  and $\eta_{jn}\in [\eta_0,1]$, $z_n=(\eta_{1n},\ldots,\eta_{kn})$, $\sigma_{kn}=\sum_{j=1}^k\eta_{jn}$ then
		$$\mathbb{E}\Vert {\cal D}^{\sigma_{kn}}_t y_{z_n}-{\cal D}^{\sigma_k}_t y_z\Vert_{C_\gamma(T;\mathbb{X})}\leq C\left(1+\Vert f\Vert_{C_\gamma(T;\mathbb{X})}+\sum_{j=1}^k|b_j|\right)\sum_{j=1}^k
		(\mathbb{E}|\eta_{jn}-\eta_j|^{\rho_j})^{1/\rho_j}.$$
		\item If $p\in \left[1,\frac{1}{1-\eta_0}\right)$, $f\in L^p(0,T;\mathbb{X})$
		then
		$$\Vert {\cal D}^{\sigma_k}_t y_{z'}-{\cal D}^{\sigma'_k}_t y_z\Vert_{L^p(0,T;\mathbb{X})}\leq C\left(1+\Vert f\Vert_{L^p(0,T;\mathbb{X})}+\sum_{j=1}^k|b_j|\right)\sum_{j=1}^k |\eta'_j-\eta_j|.$$
		In addition, in the case of random order, we have
		$$\mathbb{E}\Vert {\cal D}^{\sigma_{kn}}_t y_{z_n}-{\cal D}^{\sigma_k}_t y_z\Vert_{L^p(0,T;\mathbb{X})}\leq C\left(1+\Vert f\Vert_{L^p(0,T;\mathbb{X})}+\sum_{j=1}^k|b_j|\right)\sum_{j=1}^k
		(\mathbb{E}|\eta_{jn}-\eta_j|^{\rho_j})^{1/\rho_j}.$$
	\end{enumerate}
\end{theorem}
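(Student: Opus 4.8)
The plan is to recognize \eqref{general-Abel-Equation} as a concrete instance of the abstract second-kind Abel equation \eqref{generalized-Abel-equation} and then read off the conclusions from Theorems \ref{existence-Abel-equation} and \ref{main-theorem-Lipschitz-Abel-equation}. Concretely, I would identify the Abel exponent with $\alpha=\sigma_k=\sum_{j=1}^k\eta_j$ and the vector parameter with $z=(\eta_1,\dots,\eta_k)\in P:=[\eta_0,1]^k$, taking $\alpha_0=k\eta_0$, $\alpha_1=k$. Since $\sigma_k$ is itself a function of $z$, the whole perturbation is driven by the $\eta_j$; in particular $|\sigma'_k-\sigma_k|\le\sum_{j=1}^k|\eta'_j-\eta_j|$, so every occurrence of $|\alpha'-\alpha|$ is absorbed into $\sum_j|\eta'_j-\eta_j|=|z'-z|^\mu$ with $\mu=(1,\dots,1)$. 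Because $y_z$ is defined by ${\cal D}^{\sigma_k}_t y_z=\psi_z$ and $y_{z'}$ by ${\cal D}^{\sigma'_k}_t y_{z'}=\psi_{z'}$, one has ${\cal D}^{\sigma'_k}_t y_{z'}-{\cal D}^{\sigma_k}_t y_z=\psi_{z'}-\psi_z$, so the entire theorem reduces to the continuity of $\psi_z$, i.e.\ to Theorem \ref{main-theorem-Lipschitz-Abel-equation} applied to \eqref{general-Abel-Equation}.

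First I would check the standing hypotheses. The kernel is linear, $K(t,s,z,\psi)=c(t,s,z)\psi$ with
\[
c(t,s,z)=\frac{p_k(t)}{\Gamma(\sigma_k)}(t-s)^{\sigma_{k-1}}+\sum_{j=1}^{k-1}\frac{p_{k-j}(t)}{\Gamma(\sigma_k)}(t-s)^{\sigma_{k-1}-\sigma_j},
\]
all exponents $\sigma_{k-1}$, $\sigma_{k-1}-\sigma_j$ being $\ge 0$; hence $c$ is continuous and bounded on $\Delta_T$, giving the Lipschitz bound \eqref{Lipschitz-1} in $\psi$ with $\kappa=\sup_{\Delta_T}|c|$. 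The $t$-modulus condition (ii) of Lemma \ref{C-compact} follows from uniform continuity of the $p_j$ on $[0,T]$ together with that of $(t,s)\mapsto (t-s)^a$ for $a\ge 0$. For the data, using $\gamma>1-\eta_0$ one checks that each singular monomial $t^{\sigma_j-1}$, $t^{\sigma_\ell-\sigma_j-1}$ in $g$ obeys $\gamma+(\text{exponent})\ge\gamma+\eta_0-1>0$, so $g_z\in C_\gamma(T,\mathbb{X})$ whenever $f\in C_\gamma(T,\mathbb{X})$ (part (a)); the analogous $L^p$-integrability holds for $p<\tfrac1{1-\eta_0}$ (part (b)). Since the kernel is linear, $K^*=K$ and $g^*=g$, and the solution bounds of Theorem \ref{existence-Abel-equation} apply.

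The main work, and the step I expect to be the real obstacle, is the quantitative parameter-dependence required by Theorem \ref{main-theorem-Lipschitz-Abel-equation}(c),(d), namely
\[
\|K(t,s,z',w)-K(t,s,z,w)\|\le \kappa\,|z'-z|^\mu(\|w\|+1),\qquad \|g_{z'}-g_z\|_{C_\gamma(T,\mathbb{X})}\le\kappa_0\,|z'-z|^\mu.
\]
The difficulty is that differencing powers generates logarithms, $(t-s)^{a'}-(t-s)^{a}=(t-s)^{\xi}\ln(t-s)\,(a'-a)$ and $t^{\sigma'_j-1}-t^{\sigma_j-1}=t^{\xi-1}\ln t\,(\sigma'_j-\sigma_j)$. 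The key observation is that every variable exponent is either identically $0$ (e.g.\ $\sigma_{k-1}-\sigma_{k-1}$) or bounded below by $\eta_0>0$, so $\sup_{0<x\le T}x^{\xi}|\ln x|$ is finite uniformly for $\xi\ge\eta_0$, which tames the logarithm in the kernel; in the $C_\gamma$-norm the weight contributes $t^{\gamma+\xi-1}|\ln t|$ with $\gamma+\xi-1\ge\gamma+\eta_0-1>0$, again bounded. Smoothness of $1/\Gamma(\cdot)$ on $[\eta_0,\infty)$ disposes of the Gamma factors. The integrals arising after multiplying by the Abel weight $(t-s)^{\sigma_k-1}$ and the singularity $s^{-\gamma}$ are precisely of the type estimated in Lemma \ref{singular-integral}(a), which turns $|(t-s)^{a'-a}-1|$ into $C|a'-a|\,t^{\cdots}(1+|\ln t|)$; this yields both displayed bounds with $\nu=1$, $\mu_j=1$.

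With these hypotheses established, Theorem \ref{main-theorem-Lipschitz-Abel-equation}(c) gives the $C_\gamma$-estimate of part (a) and part (d) gives the $L^p$-estimate of part (b), once $|\sigma'_k-\sigma_k|\le\sum_j|\eta'_j-\eta_j|$ is absorbed and all constants are collected into a single $C=C(\eta_0,B_0,B_1)$; the factor $1+\|f\|+\sum_j|b_j|$ is exactly the bound on $1+\|g_z\|$ coming from the definition of $g$. Passing back through ${\cal D}^{\sigma'_k}_t y_{z'}-{\cal D}^{\sigma_k}_t y_z=\psi_{z'}-\psi_z$ delivers the stated estimates for the sequential derivatives. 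Finally, the random-order inequalities follow verbatim from the random parts of Theorem \ref{main-theorem-Lipschitz-Abel-equation}(c),(d): taking expectations of the deterministic bound and applying Jensen's inequality $\mathbb{E}|X|\le(\mathbb{E}|X|^{\rho})^{1/\rho}$ coordinatewise to each $\eta_{jn}$ produces the factors $(\mathbb{E}|\eta_{jn}-\eta_j|^{\rho_j})^{1/\rho_j}$.
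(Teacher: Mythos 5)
Your proposal is correct and follows essentially the same route as the paper: recast \eqref{general-FDE} as the second-kind Abel equation \eqref{general-Abel-Equation}, verify the Lipschitz hypotheses of Theorem \ref{main-theorem-Lipschitz-Abel-equation}(c),(d) for $K$ and $g_z$ using the lower bound $\eta_0$ on the exponents (the paper's inequality $|\tau^{\delta'}-\tau^{\delta}|\le C(\delta_0,T)|\delta'-\delta|$ is exactly your logarithm observation), and conclude via ${\cal D}^{\sigma'_k}_t y_{z'}-{\cal D}^{\sigma_k}_t y_z=\psi_{z'}-\psi_z$ together with Jensen's inequality for the random case.
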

\begin{proof}
	We choose $\mathbb{X}=\mathbb{R}$ and verify the conditions of Theorem \ref{main-theorem-Lipschitz-Abel-equation}, Part (c).
	In fact, we have
	$(k-1)\eta_0\leq \sigma_k-\eta_k=\sigma_{k-1}\leq k-1$ and $(k-1-j)\eta_0\leq \sigma_k-\sigma_j-\eta_k=\sigma_{k-1}-\sigma_j\leq k-1-j$ for every $0\leq j\leq k-1$. This implies
	$$  0\leq (t-s)^{\sigma_k-\eta_k}, (t-s)^{\sigma_k-\sigma_j-\eta_k}\leq \max\{1, T^{k-1}\}:=T_k.$$
	Hence
	\begin{align*}
		\left| K(t,s,z,v)-K(t,s,z,w)\right| &\leq \left|p_k(t)\frac{(t-s)^{\sigma_k-\eta_k}}{\Gamma(\sigma_k)}+
		\sum_{j=1}^{k-1}p_{k-j}(t)\frac{(t-s)^{\sigma_k-\sigma_j-\eta_k}}{\Gamma(\sigma_k)}\right| |v-w|\\
		&\leq \frac{kM_pT_k}{\Gamma((k-1)\eta_0)}|v-w|
	\end{align*}
	where $M_p=\max_{1\leq j\leq k}\Vert p_j\Vert_{C[0,T]}$. So, $K(t,s,z,w)$ is uniformly
	Lipschitz with respect to the variable $w\in\mathbb{X}$. We verify the Lipschitz condition with respect to $z=(\eta_1,\ldots,\eta_k)$.
	\begin{align*}
		K(t,s,z',w)-K(t,s,z,w) &= p_k(t)\left(\frac{(t-s)^{\sigma'_k-\eta'_k}}{\Gamma(\sigma'_k)}-
		\frac{(t-s)^{\sigma_k-\eta_k}}{\Gamma(\sigma_k)}\right)w+\\
		& \sum_{j=1}^{k-1}p_{k-j}(t)\left(\frac{(t-s)^{\sigma'_k-\sigma'_j-\eta_k}}{\Gamma(\sigma'_k)}-\frac{(t-s)^{\sigma_k-\sigma_j-\eta_k}}{\Gamma(\sigma_k)}\right)w.
	\end{align*}
	To prove the Lipschitz property, we use the following inequality
	$$   |\tau^{\delta'}-\tau^{\delta}|=\left|\int_{\delta}^{\delta'}\tau^s\ln \tau ds\right|\leq C(\delta_0,T)|\delta'-\delta|, \ \ \ \ \forall \tau\in [0,T], \delta'\geq\delta\geq\delta_0>0.$$
	Calculating directly gives
	$$ |K(t,s,z',w)-K(t,s,z,w)|\leq C(\eta_0,T)|z'-z|. $$
	Now, we verify the Lipschitz condition of $g_z$ where
	\begin{align*}
		g_z(t) &= f(t)-p_k(t)\sum_{j=1}^k\frac{b_jt^{\sigma_j-1}}{\Gamma(\sigma_j)}-
		\sum_{j=1}^{k-1}p_{k-j}(t)\sum_{\ell=j+1}^k\frac{b_\ell t^{\sigma_\ell-\sigma_j-1}}{\Gamma(\sigma_\ell-\sigma_j)}.
	\end{align*}
	Now, we consider the case $f\in C_\gamma(T;\mathbb{X})$. Multiplying $t^\gamma$ to the last equality gives
	\begin{align*}
		t^\gamma g_z(t) &= t^\gamma f(t)-p_k(t)\sum_{j=1}^k\frac{b_jt^{\gamma+\sigma_j-1}}{\Gamma(\sigma_j)}-
		\sum_{j=1}^{k-1}p_{k-j}(t)\sum_{\ell=j+1}^k\frac{b_\ell t^{\gamma+\sigma_\ell-\sigma_j-1}}{\Gamma(\sigma_\ell-\sigma_j)}.
	\end{align*}
	We have $\gamma+\sigma_j-1\geq \gamma+\eta_0-1>0, \gamma+\sigma_\ell-\sigma_j-1\geq \gamma+\eta_0-1>0$ for every $j=1,\ldots,k$. Hence, using the same estimate as for $|K(t,s,z',w)-K(t,s,z,w)|$ we obtain
	$$ \Vert g_{z'}-g_z\Vert_{C_\gamma(T;\mathbb{X})}\leq C(\eta_0,T)\sum_{j=1}^k|b_j||z'-z|.   $$
	From the above estimates we can apply Theorem \ref{main-theorem-Lipschitz-Abel-equation}, Part (c) to get the result in $C_\gamma(T;\mathbb{X})$.
	The $L^p$-case is similar. Hence, we omit it.
	
\end{proof}

\subsection{Abel linear equations of the first kind}
In this subsection, we  apply the order-continuity results  obtained in the previous subsections to the Abel linear equations of first kind.  We recall that $\Delta_T=\{(t,s,\alpha,z):\ 0\leq s\leq t\leq T,\alpha\in[\alpha_0,\alpha_1], z\in P \}$.
For $K_0: \Delta_T\to \mathbb{R}$, we define
\begin{equation*}
	{\cal A}_{\alpha,z} v(x)=\frac{1}{\Gamma(\alpha)}\int_0^t \frac{K_0(t,s,\alpha,z)v(s)}{(t-s)^{1-\alpha}}ds.
\end{equation*}
In this subsection, we shall investigate the continuity--with respect to  the parameters $\alpha,z$--of
solutions of linear Abel equations  which has the form
\begin{equation}
	{\cal A}_{\alpha,z} v(x)=f(t), \ \ \ \ 0\leq t\leq T.
	\label{linear-Abel-equation}
\end{equation}

\begin{theorem}
	\label{first-kind-representation}
	Let $\gamma\in [0,1)$, $0<\alpha_0<\alpha_1<1$, $\alpha\in[\alpha_0,\alpha_1]$, $z\in P\subset\mathbb{R}^k$, $\mu,\nu\in (0,1]$, $\kappa_1>0$,  assume the following
	\begin{enumerate}[\bf \upshape (i)]
		\item $K_0\in C(\Delta_T), K_0(t,t,\alpha,z)=1$;
		\item $\left\Vert\frac{\partial K_0}{\partial t}\right\Vert_{L^\infty(\Delta_T)}\leq M$;
		\item $\sup_{0\leq s\leq t\leq 1}\left|\frac{\partial K_0}{\partial t}(t,s,\alpha',z')-\frac{\partial K_0}{\partial t}(t,s,\alpha,z)\right|\leq \kappa_1(|\alpha'-\alpha|^\nu+|z'-z|^\mu)$;
	\end{enumerate}
	 and denote
	\begin{align*}
		H(t,s,\alpha,z) &= K_0(t,s,\alpha,z)-K_0(s,s,\alpha,z),\ \ \ \  (t,s,\alpha,z)\in \Delta_T,\\
		L_{\alpha,z}  (t,s)&=-\frac{\sin\pi\alpha}{\pi}\int_{s}^t (t-\tau)^{-\alpha}(\tau-s)^{\alpha-1}((\alpha-1)(\tau-s)^{-1}H(\tau,s,\alpha,z)
		+\frac{\partial H}{\partial t}(\tau,s,\alpha,z))d\tau,\\
		B_{\alpha,z} u_{\alpha,z} (t)&=\int_0^t u_{\alpha,z}(s)L_{\alpha,z}(t,s)ds.
	\end{align*}
	\begin{enumerate}[\bf \upshape (a)]
		\item Let  $f\in L^1(0,T;\mathbb{X})$ satisfy $D^\alpha f\in L^p(0,T;\mathbb{X})$. Then the equation
		(\ref{linear-Abel-equation}) has a unique solution $u_{\alpha,z}\in L^p(0,T;\mathbb{X})$
		such that
		\begin{equation*}
		u_{\alpha,z}(t)=D_t^\alpha f+B_{\alpha,z} u_{\alpha,z}.
		\end{equation*}
		This solution satisfies
		\begin{equation*}
		\Vert u_{\alpha,z}\Vert_{L^p(0,T;\mathbb{X})}\leq C(MT,p)\Vert D_t^\alpha f\Vert_{L^p(0,T;\mathbb{X})}
		\end{equation*}
		where $M=\left\Vert \frac{\partial K_0}{\partial t}\right\Vert_{L^\infty(\Delta_T)}$.
		\item If there is  an $\alpha_2\in (\alpha_1,1)$ such that $D_t^{\alpha_2}f\in L^p(0,T;\mathbb{X})$ then there a constant $C=C(\alpha_0,\alpha_1,P)$
		such that
		
		\begin{equation*}
		\Vert u_{\alpha',z'}-u_{\alpha,z}\Vert_{L^p(0,T;\mathbb{X})}\leq C(|\alpha'-\alpha|^\nu+|z'-z|^\mu).
		\end{equation*}
		In addition, if $(\alpha_n,z_n)$ are random as in Theorem \ref{main-theorem-Lipschitz-Abel-equation} then
		$$\mathbb{E}\Vert u_{\alpha_n,z_n}-u_{\alpha,z}\Vert_{L^p(0,T;\mathbb{X})}
		\leq C\left((\mathbb{E}|\alpha_n-\alpha|^\lambda)^{\nu/\lambda}+
		\sum_{j=1}^k(\mathbb{E}|z_{jn}-\beta_j|^\rho)^{\mu_j/\rho}\right).$$
	\end{enumerate}
\end{theorem}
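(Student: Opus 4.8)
The plan is to convert the first-kind equation \eqref{linear-Abel-equation} into a second-kind Volterra equation with a \emph{bounded} kernel, for which the abstract machinery is elementary. Using $K_0(s,s,\alpha,z)=1$ from (i), I would split $K_0=1+H$ and write $\mathcal{A}_{\alpha,z}v = J^\alpha v + \Phi_{\alpha,z}v$, where $\Phi_{\alpha,z}v(t)=\frac{1}{\Gamma(\alpha)}\int_0^t (t-s)^{\alpha-1}H(t,s,\alpha,z)v(s)\,ds$. Applying the Riemann--Liouville derivative $D^\alpha_t$ to \eqref{linear-Abel-equation} and using $D^\alpha_t J^\alpha v=v$ (Lemma \ref{first-lemma}(a)), the equation becomes $v + D^\alpha_t\Phi_{\alpha,z}v = D^\alpha_t f$. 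The heart of the argument is the identity $D^\alpha_t\Phi_{\alpha,z}v = -B_{\alpha,z}v$, which then yields exactly the stated representation $u_{\alpha,z}=D^\alpha_t f + B_{\alpha,z}u_{\alpha,z}$.

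To establish that identity I would compute $D^\alpha_t\Phi_{\alpha,z}v(t)=\frac{\sin\pi\alpha}{\pi}\frac{d}{dt}\int_0^t v(s)G(t,s)\,ds$ after a Fubini interchange and the reflection formula $\Gamma(\alpha)\Gamma(1-\alpha)=\pi/\sin\pi\alpha$, where $G(t,s)=\int_s^t (t-\tau)^{-\alpha}(\tau-s)^{\alpha-1}H(\tau,s,\alpha,z)\,d\tau$. The substitution $\tau=s+\theta(t-s)$ gives the scale-free form $G(t,s)=\int_0^1(1-\theta)^{-\alpha}\theta^{\alpha-1}H(s+\theta(t-s),s)\,d\theta$, from which $G(t,t)=0$ (since $H(t,t,\alpha,z)=0$) and, differentiating under the integral, $\frac{\partial G}{\partial t}(t,s)=\int_0^1(1-\theta)^{-\alpha}\theta^{\alpha}\,\partial_t K_0(s+\theta(t-s),s)\,d\theta$. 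An integration by parts in $\tau$ (whose boundary terms vanish because $H(s,s)=0$ kills the $(\tau-s)^{\alpha-1}$ singularity and $1-\alpha>0$ kills the $\tau=t$ endpoint) shows this equals $\int_s^t (t-\tau)^{-\alpha}\frac{d}{d\tau}[(\tau-s)^{\alpha-1}H(\tau,s)]\,d\tau$, so that $\frac{\sin\pi\alpha}{\pi}\frac{\partial G}{\partial t}=-L_{\alpha,z}$ by the definition of $L_{\alpha,z}$. Combined with $G(t,t)=0$ this gives $D^\alpha_t\Phi_{\alpha,z}v=-B_{\alpha,z}v$. The scale-free form also yields at once the uniform bound $|L_{\alpha,z}(t,s)|\le \frac{\sin\pi\alpha}{\pi}M\,B(1-\alpha,1+\alpha)=M\alpha\le M$ using (ii).

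With $|L_{\alpha,z}|\le M$ in hand, part (a) follows from standard second-kind Volterra theory: $B_{\alpha,z}$ is a Volterra operator with bounded kernel, so $I-B_{\alpha,z}$ is boundedly invertible on $L^p(0,T;\mathbb{X})$, and the pointwise estimate $\|u(t)\|\le \|D^\alpha_t f(t)\|+M\int_0^t\|u(s)\|\,ds$ combined with Lemma \ref{Mittag-Leffler}(h) (the case $\alpha=1$, $\lambda=M$) gives $\|u_{\alpha,z}\|_{L^p}\le(1+MTe^{MT})\|D^\alpha_t f\|_{L^p}=:C(MT,p)\|D^\alpha_t f\|_{L^p}$; uniqueness is the same Gronwall bound applied to a difference of solutions. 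One verification point I would not skip is the equivalence between \eqref{linear-Abel-equation} and the second-kind equation: applying $D^\alpha_t$ is reversible only up to the kernel of $D^\alpha_t$ (multiples of $t^{\alpha-1}$), so I would confirm that the spurious term is absent from the integrability and initial behaviour forced by $f\in L^1$ and $D^\alpha_t f\in L^p$.

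For part (b), writing $w=u_{\alpha',z'}-u_{\alpha,z}$ and subtracting the two representations gives the second-kind equation $w = R + B_{\alpha,z}w$ with forcing $R=(D^{\alpha'}_t f-D^\alpha_t f)+(B_{\alpha',z'}-B_{\alpha,z})u_{\alpha',z'}$, so the resolvent bound from (a) gives $\|w\|_{L^p}\le C(MT,p)\|R\|_{L^p}$ and it remains to estimate $R$. For the first piece I would use $D^{\alpha_2}_t f=h\in L^p$ with $\alpha_2>\alpha_1$ to write $D^\alpha_t f=J^{\alpha_2-\alpha}h$ (Lemma \ref{first-lemma}(f)), whence $\|D^{\alpha'}_t f-D^\alpha_t f\|_{L^p}=\|J^{\alpha_2-\alpha'}h-J^{\alpha_2-\alpha}h\|_{L^p}\le C(\alpha_2-\alpha_1)\|h\|_{L^p}|\alpha'-\alpha|$ by the last inequality of Lemma \ref{singular-integral}(b), the exponents lying in $[\alpha_2-\alpha_1,1)$. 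For the second piece I would bound $|L_{\alpha',z'}(t,s)-L_{\alpha,z}(t,s)|\le C(|\alpha'-\alpha|^\nu+|z'-z|^\mu)$ directly from the scale-free form, splitting the difference into the (Lipschitz-in-$\alpha$) factors $\frac{\sin\pi\alpha}{\pi}$ and $(1-\theta)^{-\alpha}\theta^\alpha$ and the factor $\partial_t K_0(\cdots,\alpha',z')-\partial_t K_0(\cdots,\alpha,z)$ controlled by hypothesis (iii); since $\nu,\mu\le1$ the Lipschitz terms are absorbed, and then $\|(B_{\alpha',z'}-B_{\alpha,z})u_{\alpha',z'}\|_{L^p}\le CT(|\alpha'-\alpha|^\nu+|z'-z|^\mu)\|u_{\alpha',z'}\|_{L^p}$, the last norm being uniformly bounded by part (a). Combining these yields the deterministic estimate, and the random-order statement follows by taking expectations and applying Jensen's inequality exactly as in Theorem \ref{main-theorem-Lipschitz-Abel-equation}(c). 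The main obstacle throughout is the rigorous kernel computation in the second paragraph---justifying the Fubini interchange, the differentiation under the integral sign, and the vanishing of the integration-by-parts boundary terms in the presence of the two endpoint singularities---together with the $D^\alpha_t$-invertibility check in part (a).
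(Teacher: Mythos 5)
Your proposal is correct and follows the same overall route as the paper: reduce the first\hyp{}kind equation to the second\hyp{}kind equation $u_{\alpha,z}=D^\alpha_t f+B_{\alpha,z}u_{\alpha,z}$, bound $L_{\alpha,z}$ via hypothesis (ii), bound $L_{\alpha',z'}-L_{\alpha,z}$ via (iii) using the scale\hyp{}free substitution $\tau=s+\theta(t-s)$, and handle $D^{\alpha'}_tf-D^\alpha_t f$ by writing it as $(J^{\alpha_2-\alpha'}-J^{\alpha_2-\alpha})D^{\alpha_2}_tf$ and applying Lemma \ref{singular-integral}(b). The one genuine difference is in how the reduction is justified: the paper simply cites Gorenflo--Vessella for the factorization ${\cal A}_{\alpha,z}=J^\alpha(I-B_{\alpha,z})$ and then invokes its general Theorems \ref{existence-Abel-equation} and \ref{main-theorem-Lipschitz-Abel-equation}(d), whereas you rederive the identity $D^\alpha_t\Phi_{\alpha,z}v=-B_{\alpha,z}v$ from scratch (Fubini, reflection formula, $G(t,t)=0$, differentiation of the scale\hyp{}free form) and then run an elementary bounded\hyp{}kernel Volterra/Gronwall argument in place of the abstract machinery. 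Your version is self\hyp{}contained and makes explicit the delicate points (the Fubini interchange, the vanishing boundary terms, the kernel of $D^\alpha_t$) that the citation hides; the paper's version is shorter and reuses its own framework uniformly across all the Abel\hyp{}type results. Incidentally, your scale\hyp{}free form of $L_{\alpha,z}$ (with no $(t-s)$ prefactor and weight $\theta^{\alpha-1}$) is the correct one; the paper's displayed factor $(t-s)^{2-2\alpha}$ and exponent $\theta^{1-\alpha}$ appear to be typographical slips, since the Jacobian of $\tau=s+\theta(t-s)$ cancels the $(t-s)$ powers exactly.
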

\begin{proof}
	{\bf Proof of (a):}
	From \cite[page 86,87]{Gorenflo-Vessella}, we have the equality
	$$   {\cal A}_{\alpha,z}=J^\alpha (I-B_{\alpha,z}).$$
	Hence, we obtain
	$$   u_{\alpha,z}(t)=D^\alpha f+B_{\alpha,z} u_{\alpha,z}.$$
	Applying Theorem \ref{existence-Abel-equation} we obtain the desired result.
	
\noindent 	{\bf Proof of (b):} We first estimate $\Vert D^{\alpha'}f(t)-D^\alpha f(t)\Vert$. Assume that $\alpha_0\leq\alpha\leq\alpha'\leq\alpha_1$.
	Since $\alpha_2-\alpha',\alpha_2-\alpha\geq\alpha_2-\alpha_1$, we have in view of Lemma \ref{singular-integral} (b)
	\begin{align*}
		\Vert D^{\alpha'}f-D^\alpha f\Vert_{L^p(0,T;\mathbb{X})} &=\Big\Vert J^{\alpha_2-\alpha'}D^{\alpha_2}f-J^{\alpha_2-\alpha}D^{\alpha_2} f\Big\Vert_{L^p(0,T;\mathbb{X})}\\
		&\leq  C(\alpha_2-\alpha_1)\Vert D^{\alpha_2}f\Vert_{L^p(0,T;\mathbb{X})}|\alpha'-\alpha|.
	\end{align*}
	Put $\tau=s+\theta (t-s)$, we obtain
	\begin{equation*}
		L_{\alpha,z} (t,s)=-\frac{(t-s)^{2-2\alpha}\sin \pi\alpha}{\pi}\int_0^1(1-\theta)^{-\alpha}\theta^{1-\alpha}g(t,s,\alpha,z,\theta)
		d\theta
	\end{equation*}
	where
	\begin{align*}
		g(t,s,\alpha,z,\theta) &= (\alpha-1)\theta^{-1}(t-s)^{-1}H(s+\theta(t-s),s,\alpha,z)\nn\\
		&+\frac{\partial H}{\partial\tau}(s+\theta(t-s),s,\alpha,z).
	\end{align*}
	Since
	$$|H(t,s,\alpha,z)|=|K_0(t,s,\alpha,z)-K_0(s,s,\alpha,z)|\leq M|s-t|,$$
	the Lagrange theorem implies that
	\begin{equation*}
		|g(t,s,\alpha,z,\theta)|\leq 2M.
	\end{equation*}
	We also claim that
	\begin{equation}\label{L-ineq}
		|L_{\alpha',z'} (t,s)-L_{\alpha,z} (t,s)|\leq C(|\alpha'-\alpha|^\nu+|z'-z|^\mu).
	\end{equation}
	In fact, the assumption (iii) implies
	\begin{align*}
		|H(s+\theta(t-s),s,\alpha',z')-H(s+\theta(t-s),s,\alpha,z)|&\leq 2\kappa_1M(|\alpha'-\alpha|^\nu+
		|z'-z|^\mu)\theta|t-s|,\\
		\left| \frac{\partial H}{\partial\tau}(s+\theta(t-s),s,\alpha',z')-\frac{\partial H}{\partial\tau}(s+\theta(t-s),s,\alpha,z)\right| &\leq 2\kappa_1(|\alpha'-\alpha|^\nu+
		|z'-z|^\mu).
	\end{align*}
	Hence,
	$$  |g(t,s,\alpha',z',\theta)-g(t,s,\alpha,z,\theta)| \leq 2\kappa_1(1+M|\alpha-1|)
	(|\alpha'-\alpha|^\nu+|z'-z|^\mu).$$
	Substituting into the formula of $L_{\alpha',z'} (t,s)-L_{\alpha,z} (t,s)$ gives
	\begin{align*}
		|L_{\alpha',z'} (t,s)-L_{\alpha,z} (t,s)| &\leq \frac{2M}{\pi}\int_0^1
		|\sin\pi\alpha'(1-\theta)^{-\alpha'}\theta^{1-\alpha'}-\sin\pi\alpha(1-\theta)^{\alpha}\theta^{1-\alpha}|d\theta+\\
		& \frac{2\kappa_1(1+M|\alpha-1|)}{\pi}(|\alpha'-\alpha|^\nu+|z'-z|^\mu)\int_0^1\sin\pi\alpha(1-\theta)^{\alpha}\theta^{1-\alpha}
		d\theta
	\end{align*}
	which gives directly the inequality \eqref{L-ineq}.
	From the estimates for $D^{\alpha'}f-D^\alpha f$ and $L_{\alpha,z}$, we can use  Part (d) of Theorem
	\ref{main-theorem-Lipschitz-Abel-equation}  to obtain the final inequality of Theorem.
\end{proof}

\subsection{A special  Abel integral equation}

In this section we use the notation  $\phi_\lambda(t,\alpha)$ for the function  which is a solution to the equation
\begin{equation*}
	\partial^\alpha_t\phi_\lambda=\lambda\phi_\lambda,\ \ \ \phi_\lambda(0)=1.
\end{equation*}
We can transform this  equation into the Abel integral equation (see, e.g., \cite[page 131]{Gorenflo-Vessella}
$$ \phi_\lambda(t,\alpha)=1+\lambda J^\alpha\phi_\lambda(t,\alpha).$$
Hence the function $\phi_\lambda$ can be represented using the function $E_{\alpha,1}(z)$. For $\lambda>0$, $\al>0$, and $t>0$ we have
$$\phi_\lambda(t,\alpha)=\phi_\lambda(t)=E_{\alpha,1}(\lambda t^\alpha).$$

Using Theorem \ref{main-theorem-Lipschitz-Abel-equation} with
$$   K(t,s,\alpha,w)=\frac{\lambda}{\Gamma(\alpha)}(t-s)^{\alpha-1}w $$
and $\kappa=\frac{|\lambda|}{\Gamma(\alpha_0)}$ we can get the Lipschitz continuity --with respect to $\alpha\in [\alpha_0, \alpha_1]$-- of the function $\phi_\lambda(t,\alpha)$
as follows
\begin{equation}\label{Mittag-Leffler-lipschitz}
	\Vert \phi_\lambda(.,\alpha')-\phi_\lambda(.,\alpha)\Vert_{C[0,T]}\leq
	C|\alpha'-\alpha| E^2_{\alpha_0,1}(|\lambda|\Gamma(\alpha_1)\max\{T^{\alpha_0}, T^{\alpha_1}\}/\Gamma(\alpha_0)).
\end{equation}

Lemma \ref{Mittag-Leffler} shows that the Lipschitz constant of the inequality is of order $e^{C'T|\lambda|^{1/\alpha}}$ which is very large when $\lambda\to\infty$. Hence, we will look for a better estimate for the case $\lambda<0$. In fact we have

\begin{lemma}\label{mittag-leffler-lemma}
	Letting $\lambda <0$, we obtain the following estimates
	\begin{enumerate}[\bf \upshape (a)]
		\item For  $0<\alpha\leq 1$, we have
		$$|\phi_\lambda(t_1,\alpha)-\phi_\lambda(t_2,\alpha)|\leq C|\lambda|.| t_1-t_2|^\alpha.$$
		\item For $0<\alpha_0<\alpha_1<1,\lambda<-1$ and $\alpha,\alpha'\in[\alpha_0,\alpha_1], \alpha<\alpha'$ and let $\beta, \beta' \in [\beta_0, \beta_1]$. Then there exist constants $C=C(\alpha_0,\alpha_1,\beta_0, \beta_1)$ such that
		\begin{eqnarray*}
			\Big| \phi_\lambda(t,\alpha')-\phi_\lambda(t,\alpha) \Big| & \leq& C|\lambda|.|\alpha'-\alpha|,\\
			\Big| E_{\alpha',1}(-|\lambda|^{\beta'} t^{\alpha'}) -E_{\alpha,1}(-|\lambda|^\beta t^\alpha) \Big| & \le& C |\la|^{\beta_1}|\ln\lambda| \Big( |\beta'-\beta|+ |\alpha'-\alpha| \Big)
		\end{eqnarray*}
		for  $t\in[0,T]$.
		\item Letting $f\in L^2(0,T)$, we put
		\begin{align*}
		G_{\alpha,\lambda}(t)=\int_0^t(t-s)^{\alpha-1}E_{\alpha,\alpha}(\lambda (t-s)^\alpha)f(s)ds.
		\end{align*}
		Then, we have the following estimate
		
		\begin{align}
		\Vert G_{\alpha',\lambda'}-G_{\alpha,\lambda}(t)\Vert_{L^2(0,T)}\leq
		C\Big[|\alpha'-\alpha|(1+|\lambda|)+|\lambda'-\lambda|\Big]\ \Vert f(t)\Vert_{L^2(0,T)}.
		\label{homo-singular-integral}
		\end{align}
	\end{enumerate}
\end{lemma}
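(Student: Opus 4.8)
The plan is to handle each part by reducing it to the uniform Mittag--Leffler estimates of Lemma \ref{Mittag-Leffler}, combined with a mean-value/integral representation in the parameter that varies. For \textbf{(a)}, I would start from $\phi_\lambda(t,\alpha)=E_{\alpha,1}(\lambda t^\alpha)$ and differentiate in $t$: by Lemma \ref{Mittag-Leffler}(f) and the chain rule, $\frac{d}{dt}\phi_\lambda(t,\alpha)=\lambda t^{\alpha-1}E_{\alpha,\alpha}(\lambda t^\alpha)$. Taking $t_1<t_2$ without loss of generality and writing the difference as $\int_{t_1}^{t_2}\lambda s^{\alpha-1}E_{\alpha,\alpha}(\lambda s^\alpha)\,ds$, I would bound $E_{\alpha,\alpha}(\lambda s^\alpha)$ using that it is nonnegative (Lemma \ref{Mittag-Leffler}(d)) and that, for the negative argument $\lambda s^\alpha\le 0$, Lemma \ref{Mittag-Leffler}(a) gives $|E_{\alpha,\alpha}(\lambda s^\alpha)|\le C/(1+|\lambda|s^\alpha)\le C$. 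This yields $\le C|\lambda|\int_{t_1}^{t_2}s^{\alpha-1}ds=\frac{C|\lambda|}{\alpha}(t_2^\alpha-t_1^\alpha)$, and the claim follows from the elementary subadditivity $t_2^\alpha-t_1^\alpha\le(t_2-t_1)^\alpha$ valid for $0<\alpha\le1$.

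For \textbf{(b)} both inequalities come from the mean value theorem in the parameters together with the chain rule and the derivative bounds of Lemma \ref{Mittag-Leffler}(a). For the first, differentiating $\phi_\lambda(t,\alpha)=E_{\alpha,1}(\lambda t^\alpha)$ in $\alpha$ gives $\frac{\partial E_{\alpha,1}}{\partial\alpha}(z)+E'_{\alpha,1}(z)\,\lambda t^\alpha\ln t$ with $z=\lambda t^\alpha\le0$; Lemma \ref{Mittag-Leffler}(a) bounds $|\partial_\alpha E_{\alpha,1}(z)|\le C/(1+|z|)$ and $|E'_{\alpha,1}(z)|\le C/(1+|z|)^2$, and since $\sup_{0<t\le T}t^\alpha|\ln t|<\infty$ for $\alpha\ge\alpha_0$, the second term is $\le C|\lambda|$, so $|\partial_\alpha\phi_\lambda|\le C|\lambda|$ (using $|\lambda|>1$) and the MVT gives $|\phi_\lambda(t,\alpha')-\phi_\lambda(t,\alpha)|\le C|\lambda|\,|\alpha'-\alpha|$. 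For the second, I would set $\Psi(\alpha,\beta)=E_{\alpha,1}(-|\lambda|^\beta t^\alpha)$ and telescope $\Psi(\alpha',\beta')-\Psi(\alpha,\beta)=[\Psi(\alpha',\beta')-\Psi(\alpha',\beta)]+[\Psi(\alpha',\beta)-\Psi(\alpha,\beta)]$. The $\beta$-derivative is $E'_{\alpha,1}(z)\cdot(-|\lambda|^\beta t^\alpha\ln|\lambda|)=E'_{\alpha,1}(z)\,z\ln|\lambda|$, and since $|z|=|\lambda|^\beta t^\alpha\le|\lambda|^{\beta_1}T^{\alpha_1}$ for $|\lambda|>1$, this is exactly where the factors $|\lambda|^{\beta_1}$ and $|\ln\lambda|$ enter; the $\alpha$-derivative is estimated just as in the first inequality (with $|\lambda|$ replaced by $|\lambda|^{\beta_1}$). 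Combining the two mean-value steps yields the stated bound.

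For \textbf{(c)} I would observe that $G_{\alpha,\lambda}=k_{\alpha,\lambda}*f$ is a convolution with kernel $k_{\alpha,\lambda}(\tau)=\tau^{\alpha-1}E_{\alpha,\alpha}(\lambda\tau^\alpha)$, so Young's inequality on $(0,T)$ gives $\Vert G_{\alpha',\lambda'}-G_{\alpha,\lambda}\Vert_{L^2}\le\Vert k_{\alpha',\lambda'}-k_{\alpha,\lambda}\Vert_{L^1}\,\Vert f\Vert_{L^2}$, and it remains to estimate the $L^1$ kernel difference, which I would split as $[k_{\alpha',\lambda'}-k_{\alpha',\lambda}]+[k_{\alpha',\lambda}-k_{\alpha,\lambda}]$. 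The first term, varying only $\lambda$, is controlled by the Lipschitz property of $E_{\alpha',\alpha'}$ on $(-\infty,0]$ (Lemma \ref{Mittag-Leffler}(a)), giving $\le C\tau^{2\alpha'-1}|\lambda'-\lambda|$, integrable in $\tau$ and producing $C|\lambda'-\lambda|$. The second term, varying only $\alpha$, I would bound by the MVT in $\alpha$ applied to $\tau^{\alpha-1}E_{\alpha,\alpha}(\lambda\tau^\alpha)$; the chain rule produces the derivatives of $E_{\alpha,\alpha}$ in both indices and in the argument, all bounded by Lemma \ref{Mittag-Leffler}(a), and the worst term $\tau^{\alpha-1}E'_{\alpha,\alpha}(z)\lambda\tau^\alpha\ln\tau$ is $\le C|\lambda|\tau^{2\alpha-1}|\ln\tau|$, which is integrable over $(0,T)$ and yields $C(1+|\lambda|)|\alpha'-\alpha|$, completing the bound.

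The main obstacle will be the $\alpha$-variation estimates in (b) and (c): because the argument $z=\lambda t^\alpha$ (resp.\ $\lambda\tau^\alpha$) depends on $\alpha$ through $t^\alpha$, the chain rule introduces logarithmic factors $\ln t$ (resp.\ $\ln\tau$) that are singular at the origin, and one must verify they are absorbed either by $\sup_{0<t\le T}t^\alpha|\ln t|<\infty$ in the pointwise estimates or by integrability of $\tau^{\alpha-1}|\ln\tau|$ on $(0,T)$ in the $L^1$ kernel estimate. Keeping all constants uniform over $\alpha\in[\alpha_0,\alpha_1]$ and $\beta\in[\beta_0,\beta_1]$, and correctly tracking the $|\lambda|^{\beta_1}|\ln\lambda|$ growth in (b) via the $\partial_\beta|\lambda|^\beta$ factor, is the only delicate bookkeeping.
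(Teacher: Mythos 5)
Your proposal is correct and follows essentially the same route as the paper: all three parts are reduced to the uniform bounds and Lipschitz/derivative estimates for $E_{\alpha,\beta}$ from Lemma \ref{Mittag-Leffler}(a), with a telescoping/mean-value argument in the varying parameter, and part (c) is handled by Young's inequality together with $L^1$ estimates of the kernel differences. The only (cosmetic) differences are that the paper bounds part (a) via the $z$-Lipschitz property of $E_{\alpha,1}$ rather than integrating the $t$-derivative, and in part (c) it splits the kernel difference into four elementary terms instead of your two-term mean-value decomposition.
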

\begin{proof}
{\bf Proof of (a):} From Lemma \ref{Mittag-Leffler} (a) the inequality $|t_1^\alpha-t_2^{\alpha}|\leq |t_2-t_1|^\alpha$
	($0\leq \alpha\leq 1$), we have
	\begin{eqnarray*}
		|\phi_\lambda(t_1,\alpha)-\phi_\lambda(t_2,\alpha)|&=&
		|E_{\alpha,1}(\lambda t_1^\alpha)-E_{\alpha,1}(\lambda t_2^\alpha)|\nonumber\\
		&\leq& C|\lambda||t^\alpha_1-t^\alpha_2|
		\leq C|\lambda| |t_1-t_2|^\alpha.\nonumber
	\end{eqnarray*}
	
{\bf Proof of (b):}	 Using Lemma \ref{Mittag-Leffler} (a) gives
\begin{align*}	
 |\phi_\lambda(t,\alpha')-\phi_\lambda(t,\alpha)|  &\leq C|\lambda|.|t^{\alpha'}-t^\alpha|\\
 &\leq C|\lambda| \sup_{\alpha_0\leq\alpha\leq\alpha_1}t^\alpha|\ln t|\ |\alpha'-\alpha|\\
& \leq C'|\lambda|.|\alpha'-\alpha|.
\end{align*}
	Finally, we have
	\begin{align}
		\Big| E_{\alpha',1}(-|\lambda|^{\beta'} t^{\alpha'}) -E_{\alpha,1}(-|\lambda|^\beta t^\alpha) \Big|  &\le \Big| E_{\alpha',1}(-|\lambda|^{\beta'} t^{\alpha'}) -E_{\alpha',1}(-|\lambda|^\beta t^{\alpha'}) \Big|\nn\\
		&\quad \quad  + \Big| E_{\alpha',1}(-|\lambda|^\beta t^{\alpha'}) -E_{\alpha,1}(-|\lambda|^\beta t^\alpha) \Big| \nn\\
		&\quad \quad  \le C \Big| |\lambda|^{\beta'} t^{\alpha'} -|\lambda|^{\beta} t^{\alpha'} \Big|+ C|\lambda|^{\beta}|\alpha'-\alpha|\nn\\
		&\quad \quad  \le C T^{\alpha_1} \Big| |\lambda|^{\beta'} -|\lambda|^{\beta}  \Big|+C|\lambda|^{\beta}|\alpha'-\alpha|\nn\\
		&\quad \quad \le C T^{\alpha_1}  |\la|^{\beta'}  \ln(|\lambda|) |\beta'-\beta|+ C\lambda^{\beta}|\alpha'-\alpha|. \nn
	\end{align}
	
{\bf Proof of (c):} For $\lambda',\lambda<0$, we have
	\begin{eqnarray*}
		G_{\alpha',\lambda'}(t)-G_{\alpha,\lambda}(t)&=&
		\int_0^t[(t-s)^{\alpha'-1}-(t-s)^{\alpha-1}]E_{\alpha',\alpha'}(\lambda' (t-s)^{\alpha'})f(s)ds\\
& &+\int_0^t(t-s)^{\alpha-1}[E_{\alpha',\alpha'}(\lambda' (t-s)^{\alpha'})-E_{\alpha',\alpha'}(\lambda (t-s)^{\alpha'})]f(s)ds\\
		& &+\int_0^t(t-s)^{\alpha-1}[E_{\alpha',\alpha'}(\lambda (t-s)^{\alpha'})-E_{\alpha,\alpha}(\lambda (t-s)^{\alpha'})]f(s)ds\\
		& &+\int_0^t(t-s)^{\alpha-1}[E_{\alpha,\alpha}(\lambda (t-s)^{\alpha'})-E_{\alpha,\alpha}(\lambda (t-s)^{\alpha})]f(s)ds.
	\end{eqnarray*}
Inserting appropriate terms deduces
	\begin{eqnarray*}
		\Vert G_{\alpha',\lambda'}-G_{\alpha,\lambda}\Vert^2_{L^2(0,T)}&\leq&
		4(K^2_1+K^2_2+K^2_3+K^2_4) \Vert f\Vert^2_{L^2(0,T)}
	\end{eqnarray*}
where
\begin{eqnarray*}
		K_1&=&
		\int_0^T\left|[s^{\alpha'-1}-s^{\alpha-1}]E_{\alpha',\alpha'}(\lambda' s^{\alpha'})\right|ds,\\
K_2 & =&\int_0^T\left|s^{\alpha-1}[E_{\alpha',\alpha'}(\lambda' s^{\alpha'})-E_{\alpha',\alpha'}(\lambda s^{\alpha'})]\right|ds,\\
K_3		& =&\int_0^T\left|s^{\alpha-1}[E_{\alpha',\alpha'}(\lambda s^{\alpha'})-E_{\alpha,\alpha}(\lambda s^{\alpha'})]\right|ds,\\
K_4		& =&\int_0^T\left|s^{\alpha-1}[E_{\alpha,\alpha}(\lambda s^{\alpha'})-E_{\alpha,\alpha}(\lambda s^{\alpha})]\right|ds.
	\end{eqnarray*}
{	
Using Lemma \ref{singular-integral} (a) gives
\begin{eqnarray*}
		K_1&\leq&
		C\int_0^T|s^{\alpha'-1}-s^{\alpha-1}|ds\leq C|\alpha'-\alpha|.
\end{eqnarray*}
For $K_2$, we use Lemma \ref{Mittag-Leffler} (a) to obtain
\begin{eqnarray*}
K_2 &\leq &\int_0^T\left|s^{\alpha-1}[E_{\alpha',\alpha'}(\lambda' s^{\alpha'})-E_{\alpha',\alpha'}(\lambda s^{\alpha'})]\right|ds\\
&\leq& C|\lambda'-\lambda|\int_0^Ts^{\alpha-1}s^{\alpha'}ds\leq C'|\lambda'-\lambda|.
\end{eqnarray*}
Similarly
\begin{eqnarray*}
K_4	\leq C|\lambda|.|\alpha'-\alpha|.
	\end{eqnarray*}
Finally, for $z\leq 0$
\begin{eqnarray*}
  |E_{\alpha',\alpha'}(z)-E_{\alpha,\alpha}(z)|&\leq&
|E_{\alpha',\alpha'}(z)-E_{\alpha,\alpha'}(z)|+|E_{\alpha,\alpha'}(z)-E_{\alpha,\alpha}(z)|
\leq C|\alpha'-\alpha|
\end{eqnarray*}
where
$$   C=\sup\left\{  \left|\frac{\partial E_{\alpha,\beta}}{\partial\alpha}(z)\right|+
\left|\frac{\partial E_{\alpha,\beta}}{\partial\beta}(z)\right|:\
(\alpha,\beta,z)\in[\alpha_0,\alpha_1]\times[\beta_0,\beta_1]\times (-\infty,0]\right\}.$$
Hence, we can use Lemma \ref{Mittag-Leffler} (a) to obtain
\begin{eqnarray*}
K_3		& \leq &\int_0^T\left|s^{\alpha-1}[E_{\alpha',\alpha'}(\lambda s^{\alpha'})-E_{\alpha,\alpha}(\lambda s^{\alpha'})]\right|ds\leq C|\alpha'-\alpha|.\\
\end{eqnarray*}
Combining the estimates for $K_1,K_2,K_3,K_4$ gives
	\begin{align*}
		\Vert G_{\alpha',\lambda'}-G_{\alpha,\lambda}\Vert_{L^2(0,T)}\leq
C\Big[|\alpha'-\alpha|(1+|\lambda|)+|\lambda'-\lambda|\Big]\ \Vert f\Vert_{L^2(0,T)}.
	\end{align*}
	This completes the proof of (c).
}	
\end{proof}

\section{Continuity of the solutions of some space-time fractional partial differential equations.}

\setcounter{equation}{0}

In this section, we will consider the continuity of solutions of some abstract partial differential equations with respect to the parameters $\beta, \alpha$ of some fractional partial differential equations.

\subsection{The abstract fractional diffusion equation in a Banach space}
 We first investigate the continuity  properties  in a Banach space.
To this end, an outline for classical definitions in the  theory of semigroup on Banach spaces is necessary. Let $\mathbb{X}$ be a Banach space as the in previous section and ${\cal L}(\mathbb{X})$ be the set of bounded linear operators on $\mathbb{X}$.  Let $B: D(B)\to \mathbb{X}$
($D(B)\subset\mathbb{X}$) be a closed linear operator. We denote (see \cite{Engel-Nagel}, page 55) the spectrum set of $B$, the resolvent set of $B$  by
\begin{align*}
	\sigma(B)&= \Big\{\lambda\in\mathbb{C}:\ \lambda-B\ {\rm \ is\ not\ bijective} \Big\},\\
	\rho(B)&= \Big\{\lambda\in\mathbb{C}:\ \lambda\not\in \sigma(B) \Big\}
\end{align*}
respectively. For $0\leq \omega<\pi$ we denote the sector with angle $\omega$ by
$$  \Sigma_\omega=\{\lambda\in \mathbb{C}:\ \lambda\not=0, |{\rm arg}\ \lambda|<\omega\}.$$
As in \cite[page 91]{triebel} we say that the operator $B$ is positive if $[0,\infty)\subset \rho(-B)$ and
$$   \sup_{\lambda\geq 0}\Vert (\lambda+1)(\lambda I+B)^{-1}\Vert_{{\cal L}(\mathbb{X})}<\infty. $$
We denote by $\Phi_B$ the set of real numbers $\eta\in (0,\pi]$ satisfying $\overline{\Sigma_\eta}\subset\rho(-B)$ and
$$   \sup_{\lambda\in \overline{\Sigma_\eta}}\Vert (\lambda+1)(\lambda I+B)^{-1}\Vert_{{\cal L}(\mathbb{X})}<\infty. $$
From the positivity of $B$ , we can verify that $\Phi_B\not=\emptyset$. We define the spectral angle of $B$ by
$$\phi_B=\inf \{\omega\in(0,\pi]:\ \pi-\omega\in\Phi_B\}.$$
As in \cite{ClementJDE} we consider two Banach spaces $X_1\subset X_0 $  such that $X_0$ is dense in $X_1$ and that
$A:X_1\to X_0$ is an isomorphism.
We denote by $X_\theta=[X_0,X_1]_\theta$  ($0\leq \theta\leq 1$) the interpolation spaces between $X_0$ and $X_1$ defined by the folowing: element $\xi\in X_\theta $ if and only if
$$\lim_{|\lambda|\to\infty, |{\rm arg}\ \lambda|<\eta} \Vert \lambda^\theta A(\lambda I+A)^{-1}\xi\Vert_{X_0}=0.$$
for every $0\leq\eta<\pi-\phi_A$. With $\eta$ fixed, The space $X_\theta$, called the abstract H\"older space
(see \cite{Engel-Nagel}, page 130),  is a Banach space with the norm
$$   \Vert \xi\Vert_{X_\theta}= \sup_{|{\rm arg}\ \lambda|<\eta,\lambda\not= 0}
\Vert \lambda^\theta A(\lambda I+A)^{-1}\xi\Vert_{X_0}.$$
We can verify directly that $X_{\theta_1}\cap X_{\theta_2}\subset X_{\theta}$ for $0\leq \theta_2\leq\theta\leq\theta_1\leq 1$.

For $\xi\in X_0$, we consider the problem of finding $u: [0,T]\to X_1$ such that

\begin{equation*}
 D^\alpha_t(u-\xi)+Au=0,\ \ \ u(0)=\xi.
\end{equation*}

Let $\alpha\in (0,2)$, $\gamma\in (0,1)$.  We define the following space of functions
\begin{eqnarray*}
	C_{\gamma,0}(T, X)&=&\{h:(0,T]\to \mathbb{X}: h\in C_\gamma(T,X), \lim_{t\to 0+}t^\gamma\Vert h(t)\Vert=0\},\\
	C^\alpha_{\gamma,0}(T,X_0,X_1)&=&\{h: h\in C_{\gamma,0}(T, X_1);\ D_t^\alpha h\in C_{\gamma,0}(T, X_0) \}
\end{eqnarray*}
with the respective norms
\begin{eqnarray*}
	\Vert u\Vert_{C_{\gamma,0}(T, X)} &=& \sup_{t\in(0,1]}t^\gamma\Vert u(t)\Vert_X,\\
	\Vert v\Vert_{C^\alpha_{\gamma,0}(T,X_0,X_1)}&=& \sup_{t\in(0,1]}t^\gamma
	(\Vert D^\alpha_t v(t)\Vert_{ X_0}+\Vert v(t)\Vert_{X_1}).
\end{eqnarray*}
We define the trace operator
$$  Tr: C^\alpha_{\gamma,0}(T,X_0,X_1)\to X_0\ \ {\rm by}\ Tr(u)=u(0).$$

The next theorem establishes  the existence, uniqueness and continuity of solution to time fractional diffusion type equations in a Banach space.
\begin{theorem}
	\begin{enumerate}[\bf \upshape(a)]
		\item Let $\alpha\in (0,2)$, $\gamma\in (0,\min\{1,\alpha\})$.
		We assume that
		$A$ is  nonegative with the spectral angle $\phi_A$ satisfying
		$$  0<\phi_A<\pi \left(1-\frac{\alpha}{2}\right).$$
		Put $\hat{\gamma}=\frac{\gamma}{\alpha}$. If $\xi\in X_{1-\hat{\gamma}}$ then
		$$  Tr(C^\alpha_{\gamma,0}(T,X_0,X_1))=X_{1-\hat{\gamma}} $$
		and the problem
		$$D^\alpha_t(u-\xi)+Au=0$$
		has a unique solution $u_\alpha\in C^\alpha_{\gamma,0}(T,X_0,X_1)$ given by
		\begin{equation*}
		u_\alpha(t)=\frac{1}{2\pi i}\int_{\gamma_{1,\varphi}}e^{\lambda t}(\lambda^\alpha I+A)^{-1}\lambda^{\alpha-1}\xi d\lambda.
		\end{equation*}
		Here $\varphi\in \left(\frac{\pi}{2},\frac{\pi-\phi_A}{\alpha}\right)$
		and  the curve $\gamma_{1,\varphi}$ is defined in (\ref{gammarhovarphi}).
		\item For $ 0<\alpha_0<\alpha_1<2, \alpha\in [\alpha_0,\alpha_1]$, $\gamma\in (0,1)$ and $0< \gamma<\min\{1,\alpha_0\}$,
		if
		$A$ is  nonnegative with the spectral angle $\phi_A$ satisfying
		$$  0<\phi_A<\pi \left(1-\frac{\alpha_1}{2}\right),$$
		then
		$$  Tr(C^\alpha_{\gamma,0}(T,X_0,X_1))=X_{1-\hat{\gamma}} $$
		for every $0< \gamma<\min\{1,\alpha_0\}$.
		Moreover, put
		$$\hat{\gamma}_0=\frac{\gamma}{\alpha_0}, \hat{\gamma}_1=\frac{\gamma}{\alpha_1}.$$
		If   $\xi\in X_{1-\hat{\gamma}_0}\cap X_{1-\hat{\gamma}_1}$ and
		$\varphi\in\left(\frac{\pi}{2},\frac{\pi-\phi_A}{\alpha_1}\right)$ then the problem
		$$D^\alpha_t(u-\xi)+Au=0$$
		has a unique solution $u_{\alpha,\xi}\in C^\alpha_{\gamma,0}(T,X_0,X_1)$ satisfying
		$$   u_{\alpha,\xi}(t)=\frac{1}{2\pi i}\int_{\gamma_{1,\varphi}}e^{\lambda t}(\lambda^\alpha I+A)^{-1}\lambda^{\alpha-1}\xi d\lambda$$
		for every $\alpha\in [\alpha_0,\alpha_1]$, $\varphi\in \left(\frac{\pi}{2},\frac{\pi-\phi_A}{\alpha_1}\right)$. Moreover, there exists a constant $C=C(\alpha_0,\alpha_1)$ such that
		$$  \Vert u_{\alpha',\xi'}(t)-u_{\alpha,\xi}(t)\Vert_{X_0}\leq C(1+\Vert\xi\Vert_{X_0})\left(|\alpha'-\alpha|+
		\Vert \xi'-\xi\Vert_{X_0}
		\right)$$
		for all  $\alpha,\alpha'\in [\alpha_0,\alpha_1],\ \xi,\xi'\in X_{1-{\hat{\gamma}_0}}
		\cap
		X_{1-{\hat{\gamma}_1}}
		.$
	\end{enumerate}
\end{theorem}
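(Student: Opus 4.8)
The plan is to treat $D^\alpha_t(u-\xi)+Au=0$, $u(0)=\xi$, as an abstract fractional Cauchy problem and solve it by the Laplace transform. Using $\partial^\alpha_t u=D^\alpha_t(u-u(0))$ (Lemma \ref{first-lemma}(g)) and transforming gives $(\lambda^\alpha I+A)\hat u(\lambda)=\lambda^{\alpha-1}\xi$, hence $\hat u(\lambda)=(\lambda^\alpha I+A)^{-1}\lambda^{\alpha-1}\xi$; inverting along a Bromwich line and deforming it onto the Hankel-type curve $\gamma_{1,\varphi}$ of (\ref{gammarhovarphi}) produces the stated representation. The first point to verify is that the integral makes sense. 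Since $A$ is nonnegative with spectral angle $\phi_A$ and $\varphi\in(\tfrac{\pi}{2},\tfrac{\pi-\phi_A}{\alpha})$, every $\lambda\in\gamma_{1,\varphi}$ satisfies $|\arg\lambda^\alpha|=\alpha|\arg\lambda|\le\alpha\varphi<\pi-\phi_A$, so $(\lambda^\alpha I+A)^{-1}$ exists with $\|(\lambda^\alpha I+A)^{-1}\|_{\mathcal{L}(X_0)}\le C/(1+|\lambda|^\alpha)$; the hypothesis $\phi_A<\pi(1-\tfrac{\alpha}{2})$ is exactly what makes the admissible interval for $\varphi$ nonempty. On the rays $\cos\varphi<0$, so $e^{\lambda t}$ decays and the integral converges absolutely for $t>0$.

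To place $u_\alpha$ in $C^\alpha_{\gamma,0}(T,X_0,X_1)$ and recover $u_\alpha(0)=\xi$, I would rescale: deforming to inner radius $1/t$ and substituting $\lambda=\sigma/t$ gives the $t$-uniform form $u_\alpha(t)=\frac{1}{2\pi i}\int_{\gamma_{1,\varphi}}e^\sigma(\sigma^\alpha I+t^\alpha A)^{-1}\sigma^{\alpha-1}\xi\,d\sigma$. From this, $\|u_\alpha(t)\|_{X_0}\le C\|\xi\|_{X_0}$ uniformly, while $\|Au_\alpha(t)\|_{X_0}\simeq\|u_\alpha(t)\|_{X_1}$ and $\|D^\alpha_t u_\alpha(t)\|_{X_0}$ are estimated with the weight $t^\gamma$ using the resolvent bound together with the interpolation description of $X_{1-\hat{\gamma}}$; letting $t\to0^+$ and using the Hankel value $\frac{1}{2\pi i}\int_{\gamma_{1,\varphi}}e^\sigma\sigma^{-1}\,d\sigma=1/\Gamma(1)=1$ yields $u_\alpha(0)=\xi$. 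The identity $Tr(C^\alpha_{\gamma,0})=X_{1-\hat{\gamma}}$ then follows in the style of \cite{ClementJDE}: the representation gives $\supseteq$ (the solution operator maps $X_{1-\hat{\gamma}}$ boundedly into $C^\alpha_{\gamma,0}$ with trace $\xi$), while $\subseteq$ comes from testing $u(0)$ with $\lambda^{1-\hat{\gamma}}A(\lambda I+A)^{-1}$ and using $D^\alpha_t u\in C_{\gamma,0}(T,X_0)$. Uniqueness is immediate from injectivity of the Laplace transform. For part (b) the single choice $\varphi\in(\tfrac{\pi}{2},\tfrac{\pi-\phi_A}{\alpha_1})$ works simultaneously for all $\alpha\in[\alpha_0,\alpha_1]$, since $(\pi-\phi_A)/\alpha$ is decreasing in $\alpha$, giving existence, uniqueness and the trace identity uniformly.

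The core of part (b) is the continuity estimate, which I would split as
$$u_{\alpha',\xi'}(t)-u_{\alpha,\xi}(t)=\big(u_{\alpha',\xi'}(t)-u_{\alpha',\xi}(t)\big)+\big(u_{\alpha',\xi}(t)-u_{\alpha,\xi}(t)\big).$$
The first bracket is linear in $\xi'-\xi$ and is bounded by $C\|\xi'-\xi\|_{X_0}$ via the uniform operator bound from paragraph two. For the second I would write it as $\int_\alpha^{\alpha'}\partial_\beta u_{\beta,\xi}(t)\,d\beta$, differentiate the rescaled representation under the integral sign, and collect the three resulting contributions, coming from $\partial_\beta\sigma^{\beta-1}$, from $\partial_\beta\sigma^\beta$ inside the resolvent, and from $\partial_\beta t^\beta A$ inside the resolvent. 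Each carries either a $\ln\sigma$ or a $\ln t$ factor; the $\ln\sigma$ terms are harmless against the $e^{\operatorname{Re}\sigma}$ decay along $\gamma_{1,\varphi}$, but the term containing $\ln t$ is the genuine difficulty.

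The main obstacle is controlling this $\ln t$ term as $t\to0^+$: a crude resolvent bound only delivers $\|\partial_\beta u_{\beta,\xi}(t)\|_{X_0}\le C(1+|\ln t|)\|\xi\|_{X_0}$, which is not uniform. This is exactly where $\xi\in X_{1-\hat{\gamma}_0}\cap X_{1-\hat{\gamma}_1}$ is used essentially: since $X_{1-\hat{\gamma}_0}\cap X_{1-\hat{\gamma}_1}\subset X_{1-\hat{\gamma}_\beta}$ for every $\beta\in[\alpha_0,\alpha_1]$ (with $\hat{\gamma}_\beta=\gamma/\beta$), the interpolation bound $\|A(\mu I+A)^{-1}\xi\|_{X_0}\le\mu^{-(1-\hat{\gamma}_\beta)}\|\xi\|_{X_{1-\hat{\gamma}_\beta}}$ applied with $\mu=\sigma^\beta/t^\beta$ supplies an extra factor $t^{\beta(1-\hat{\gamma}_\beta)}$ in the offending term; because $t^{\beta(1-\hat{\gamma}_\beta)}|\ln t|$ is bounded on $(0,T]$, the singularity is absorbed and one obtains $\|\partial_\beta u_{\beta,\xi}(t)\|_{X_0}\le C\,\|\xi\|_{X_{1-\hat{\gamma}_0}\cap X_{1-\hat{\gamma}_1}}$ uniformly in $t$ and $\beta$, whence integration in $\beta$ gives the $|\alpha'-\alpha|$ bound. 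This is the operator-valued analogue of Lemma \ref{mittag-leffler-lemma}; I note that the natural constant multiplies the interpolation norm of $\xi$, so matching the stated form $C(1+\|\xi\|_{X_0})$ uses that $\xi$ lies in the controlled intersection. Finally, the random-order estimates follow by taking expectations and invoking Jensen's inequality exactly as in Theorem \ref{main-theorem-Lipschitz-Abel-equation}.
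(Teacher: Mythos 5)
Your argument reaches the same conclusions as the paper but by a genuinely different route in the part that carries the content. For (a) the paper simply defers to Cl\'ement et al.\ \cite{ClementTAMS}, so your Laplace-transform derivation of the representation, the resolvent bound along $\gamma_{1,\varphi}$, and the trace identity are a consistent expansion of what the paper leaves implicit. For (b), the paper splits $u_{\alpha',\xi'}-u_{\alpha,\xi}=J_1+J_2+J_3$ inside a single contour integral: $J_3$ is handled exactly as your first bracket, while $J_1,J_2$ are bounded directly by extracting the factor $|\lambda^{\alpha'}-\lambda^{\alpha}|$ through the resolvent identity $(\lambda^{\alpha}I+A)^{-1}-(\lambda^{\alpha'}I+A)^{-1}=(\lambda^{\alpha'}-\lambda^{\alpha})(\lambda^{\alpha}I+A)^{-1}(\lambda^{\alpha'}I+A)^{-1}$ and the uniform bound (\ref{spectral-estimate}). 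You instead write the $\alpha$-difference as $\int_{\alpha}^{\alpha'}\partial_\beta u_{\beta,\xi}\,d\beta$ and differentiate the rescaled representation; your three $\partial_\beta$-contributions are the infinitesimal versions of the paper's $J_1$ and $J_2$. What your version buys is an honest treatment of the small-$t$ behaviour: the paper asserts $\int_{\gamma_{1,\varphi}}e^{\operatorname{Re}\lambda t}\,|\lambda^{\alpha'}-\lambda^{\alpha}|\,|d\lambda|\leq C|\alpha'-\alpha|$ with $C$ independent of $t$, although the integrand grows like $|\lambda|^{\alpha_1}\ln|\lambda|$ along the rays and the integral degenerates as $t\downarrow 0$; you isolate exactly this $\ln t$ obstruction and absorb it using $\xi\in X_{1-\hat{\gamma}_0}\cap X_{1-\hat{\gamma}_1}$ together with the interpolation bound for $A(\mu I+A)^{-1}$. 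The price is the one you already flag: your uniform bound naturally carries $\Vert\xi\Vert_{X_{1-\hat{\gamma}_0}\cap X_{1-\hat{\gamma}_1}}$ rather than $\Vert\xi\Vert_{X_0}$, so it does not literally reproduce the stated constant $C(1+\Vert\xi\Vert_{X_0})$ unless $C$ is allowed to depend on the interpolation norm of $\xi$; the paper's stated form comes from its cruder, $t$-nonuniform estimate, so this discrepancy reflects a defect of the statement rather than of your argument. One small excess: the closing remark about random-order estimates has no counterpart in this theorem.
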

\begin{proof}
	{\bf Proof of (a):} Readers can be found in  Clement et al. \cite{ClementTAMS}.\\
	{\bf Proof of (b):} Since $A$ is  nonegative with the spectral angle $\phi_A$ satisfying $ 0<\phi_A<\pi \left(1-\frac{\alpha_1}{2}\right)$ and $\alpha\in [\alpha_0,\alpha_1]$ we have $ 0<\phi_A<\pi \left(1-\frac{\alpha}{2}\right)$. Hence we obtain
	$$  Tr(C^\alpha_{\gamma,0}(T,X_0,X_1))=X_{1-\hat{\gamma}} $$
	as in (a). Since $X_{1-\hat{\gamma}_0}\cap X_{1-\hat{\gamma}_1}\subset X_{1-\hat{\gamma}}$, if
	$\xi\in X_{1-\hat{\gamma}_0}\cap X_{1-\hat{\gamma}_1}$ then $\xi\in X_{1-\hat{\gamma}}$. Applying (a),
	we have
	$$   u_\alpha(t)=\frac{1}{2\pi i}\int_{\gamma_{1,\varphi}}e^{\lambda t}(\lambda^\alpha I+A)^{-1}\lambda^{\alpha-1}\xi d\lambda\ \ \ \ {\rm for\ every}\ \alpha\in [\alpha_0,\alpha_1].$$
	For $\lambda\in\gamma_{1,\varphi}, |\lambda|>1$, we have $\arg(\lambda)=\pm\varphi$. It follows that
	$$|\arg\ \lambda^\alpha|=\varphi\alpha\leq\varphi\alpha_1<\pi-\phi_A\ \ \ {\rm for}\ \lambda\in\gamma_{1,\varphi}, |\lambda|>1,
	\alpha\in[\alpha_0,\alpha_1].$$
	Hence
	\begin{eqnarray}
		\sup_{\lambda\in\gamma_{1,\varphi}}\Vert (\lambda^\alpha I+A)^{-1}\Vert
		&\leq &
		\sup_{\lambda\in\gamma_{1,\varphi}}\Vert \lambda^\alpha(\lambda^\alpha I+A)^{-1}\Vert\nonumber\\
		&\leq &
		C_{\omega_0}\sup_{\lambda\in\gamma_{1,\varphi}}\Vert (1+\lambda^\alpha)(\lambda^\alpha I+A)^{-1}\Vert
		\leq C_{\omega_0}M_{\omega_0}
		\label{spectral-estimate}
	\end{eqnarray}
	where $\omega_0=\pi-\varphi\alpha_1$,
	$C_{\omega_0}=\sup_{\lambda\in \overline{\Sigma_{\omega_0}}}|\lambda (\lambda+1)^{-1}|$ and
	$$M_{\omega_0}=\sup_{\lambda\in \overline{\Sigma_{\omega_0}}}\Vert (\lambda+1)(\lambda I+A)^{-1}\Vert.$$
	We consider the last inequality. For $\alpha,\alpha'\in[\alpha_0,\alpha_1]$ we can write
	\begin{eqnarray*}
		u_{\alpha',\xi'}(t)-u_{\alpha,\xi}(t) &=& J_1+J_2+J_3
	\end{eqnarray*}
	where
	\begin{eqnarray*}
		J_1 &=& \frac{1}{2\pi i}\int_{\gamma_{1,\varphi}}e^{\lambda t}((\lambda^{\alpha'} I+A)^{-1}-
		(\lambda^\alpha I+A)^{-1})\lambda^{\alpha'-1}\xi' d\lambda,\\
		J_2&=& \frac{1}{2\pi i}\int_{\gamma_{1,\varphi}}e^{\lambda t}
		(\lambda^\alpha I+A)^{-1}(\lambda^{\alpha'-1}-\lambda^{\alpha-1})\xi' d\lambda,\\
		J_3&=& \frac{1}{2\pi i}\int_{\gamma_{1,\varphi}}e^{\lambda t}
		(\lambda^\alpha I+A)^{-1}\lambda^{\alpha-1}(\xi'-\xi) d\lambda.
	\end{eqnarray*}
	From (\ref{spectral-estimate}) we have
	$$\Vert J_3\Vert \leq  \frac{1}{2\pi}\int_{\gamma_{1,\varphi}}e^{Re\lambda t}
	C_{\omega_0}M_{\omega_0} \Vert \xi'-\xi\Vert_{X_0} |d\lambda|\leq C\Vert\xi'-\xi\Vert_{X_{0}}. $$
	Using $(\lambda^{\alpha} I+A)^{-1}-
	(\lambda^{\alpha'} I+A)^{-1}=(\lambda^{\alpha'}-\lambda^{\alpha})
	(\lambda^{\alpha} I+A)^{-1}(\lambda^{\alpha'} I+A)^{-1}$ and  estimating directly $J_1$, we obtain in view of
	(\ref{spectral-estimate})
	\begin{eqnarray*}
		\Vert J_1\Vert_{X_0}&\leq &\frac{1}{2\pi}\int_{\gamma_{1,\varphi}}e^{Re\lambda t}
		|\lambda^{\alpha'}-\lambda^{\alpha}|
		C^2_{\omega_0}M^2_{\omega_0} \Vert \xi'\Vert_{X_0} |d\lambda|
		\leq C|\alpha'-\alpha|\Vert\xi'\Vert_{X_{0}}.
	\end{eqnarray*}
	Similarly,
	\begin{eqnarray*}
		\Vert J_2\Vert_{X_0}&\leq &\frac{1}{2\pi}\int_{\gamma_{1,\varphi}}e^{Re\lambda t}
		|\lambda^{\alpha'}-\lambda^{\alpha}|
		C_{\omega_0}M_{\omega_0} \Vert \xi'\Vert_{X_0} |d\lambda|
		\leq C|\alpha'-\alpha|\Vert\xi'\Vert_{X_{0}}.
	\end{eqnarray*}
	This completes the proof of the theorem.
\end{proof}

\subsection{The abstract fractional diffusion equation in a Hilbert space}\label{tfpde-hilbert-space}

In this section we will study the existence, uniqueness and the continuity of  solutions to time fractional equation in a Hilbert  space. The main results are theorems \ref{Hilbert-case}, \ref{homogeneous-fractional-diffusion-theorem}, and \ref{nonhomogeneous-fractional-theorem}.

We denote by $V$ and $H$ the real separable Hilbert spaces, $V'$ the dual of $V$ and
$\langle .,.\rangle$ the inner product of $H$. We assume that the space $V$ is dense in $H$ and continuously embedded into $H$. Usually, we write  $V\hookrightarrow H\hookrightarrow V'$ and
$$  \varphi(v):=\langle \varphi,v\rangle_{V'\times V}, \ \ \ \forall \varphi \in V', v\in V.$$
For $x\in H$, $\alpha\in (0,1]$ we put
$$W_\alpha(x,V,H):=\{u\in L^2(0,T;V):\ D_t^{\alpha} (u-x)\in L^2(0,T;V'), u(0)=x \}.$$
We note that $W_\alpha(x,V,H)\subset W_{\alpha'}(x,V,H)$ for $0<\alpha'<\alpha\leq 1$.
For $\gamma\in (0,1)$, we denote by
$$ H^\gamma(\mathbb{R}; V,V')=\{v:\ v\in L^2(\mathbb{R}; V),\
|\tau|^\gamma \hat{v}\in L^2(\mathbb{R}; V')\}$$
where $\hat{v}(\tau)=\int_{-\infty}^\infty v(t)e^{-it\tau}dt.$
The space $H^\gamma(\mathbb{R}; V,V')$ is a Hilbert space with the norm
$\Vert v\Vert^2_{H^\gamma(\mathbb{R}; V,V')}=\Vert v\Vert^2_{L^2(\mathbb{R}; V)}+
\Vert |\tau|^\gamma\hat{v}\Vert^2_{L^2(\mathbb{R}; V')}$. From the latter space we can define the space of all functions having the fractional derivetives of order $\gamma$ by putting the set
$$H^\gamma(0,T; V,V')=\{w:\ w=v\mid_{(0,T)},\ v\in H^\gamma(\mathbb{R}; V,V')\}.$$
We have, see e.g. \cite{Lions}, page 61,
$$H^\gamma(0,T; V,V')=\{w:\ w\in L^2(0,T;V),\ D_t^\gamma w\in L^2(0,T;V')\}.$$

We denote by $K$ a compact subset in $\mathbb{R}^k$. For every $\beta\in K$, let $a_\beta(t,.,.):V\times V\rightarrow \mathbb{R}$ be a bilinear functional and $f_\beta\in L^2(0,T; V')$. We define the operator $B_\beta(t): V\rightarrow V'$ by
$$  \left\langle B_\beta(t)v,w \right\rangle_{V'\times V}=a_\beta(t,w,v) \ \ \ \forall v,w\in V.$$
We consider the problem of finding $u_{\alpha,\beta}\in W_\alpha(x,V,H)$ such that
\begin{equation}
	\label{Hilbert-abstract-equation}
	\frac{d}{dt}\langle J^{1-\alpha}(u_{\alpha,\beta}-x),v\rangle+a_\beta(t,u_{\alpha,\beta}(t),v)=\langle f_\beta(t),v\rangle_{V'\times V},\ \ \ \ v\in V, a.a. t\in (0,T).
\end{equation}
We have the theorem which establishes  the existence, uniqueness and the continuity of  solutions to time fractional diffusion type equation in a Hilbert  space.

\begin{theorem}\label{Hilbert-case}
	\begin{enumerate}[\bf \upshape (a)]
		\item Let $T,M,c,d,$ be constants, $0<\alpha_0<\alpha_1<1$,  $\alpha\in [\alpha_0,\alpha_1]$. We assume that $a_\beta(t,.,.):V\times V\rightarrow \mathbb{R}$ is a bilinear functional satisfying A1--A2 below
		
		{\bf A1)}
		$  |a_\beta(t,v,w)|\leq M \Vert v\Vert_V\Vert w\Vert_V,\ \ \ \ \forall v,w\in V,\beta\in K,$
		
		{\bf A2)}
		$   a_\beta(t,v,v)\geq c\Vert v\Vert^2_V-d\Vert v\Vert^2_H,\ \ \ \ \forall v\in V,$
		
		\noindent for a.a. $t\in (0,T)$. \\
		Then, for $x\in H, f_\beta\in L^2(0,T;V')$ there exists  a  unique function
		$ u_{\alpha,\beta}\in W_\alpha(x,V,H)$
		satisfying (\ref{Hilbert-abstract-equation}).
		
		Moreover, there exists a constant $M_0>0$
		independent of $\alpha,\beta$
		such that
		\begin{equation}
		\Vert D^\alpha_t(u_{\alpha,\beta}-x)\Vert_{L^2(0,T;V')}+\Vert u_{\alpha,\beta}\Vert_{L^2(0,T;V)}\leq M_0\left(\Vert x\Vert+
		\Vert f_\beta\Vert_{L^2(0,T;V')}\right).
		\label{Galerkin-bound}
		\end{equation}
		\item Let $\alpha_n\in [\alpha_0,\alpha_1], \beta_n\in K$ for every $n\in\mathbb{N}$ and
		$$ \lim_{n\to\infty}\alpha_n=\alpha, \lim_{n\to\infty}\beta_n=\beta. $$
		Assume A3 and  A4, or A4'  below hold
		
		A3) $\lim_{\beta_n\to\beta}\Vert f_{\beta_n}-f_\beta\Vert_{L^2(0,T;V')}=0$;

		A4)  $\lim_{n\to\infty}\Vert B_{\beta_n}(.)v-B_\beta(.)v\Vert_{L^2(0,T;V')}=0$ for every $v\in V$;

		A4')  there is a subset $D\subset V$ such that $span\{ D\}$ is dense in $H$ and that
		$\lim_{n\to\infty}\Vert B_{\beta_n}(.)v-B_\beta(.)v\Vert_{L^2(0,T;H')}=0$ for every $v\in D$.
		
		Then we have $u_{\alpha_n,\beta_n}\to u_{\alpha,\beta}$ as $n\to\infty$ in
		$L^2(0,T;V)$.

		In addition, if $V$ is compactly embedded in $H$ then
		$$     \lim_{n\to\infty}\Vert u_{\alpha_n,\beta_n}-u_{\alpha,\beta}\Vert_{L^2(0,T;V')}=0.   $$
	\end{enumerate}
\end{theorem}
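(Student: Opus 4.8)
For part (a) the plan is to build the solution by a Galerkin scheme adapted to the fractional setting. Since $V$ is separable and densely, continuously embedded in $H$, I would fix a basis $\{w_j\}_{j\ge 1}$ of $V$ with dense span, set $V_m=\mathrm{span}\{w_1,\dots,w_m\}$ with $H$-orthogonal projection $P_m$, and seek $u_m(t)=\sum_{j=1}^m c_j^m(t)w_j$ satisfying the projected equations
\[
\langle D_t^\alpha(u_m-P_m x),w_j\rangle + a_\beta(t,u_m,w_j)=\langle f_\beta,w_j\rangle,\qquad j=1,\dots,m,
\]
with $u_m(0)=P_m x$. In the coordinates $c^m=(c_1^m,\dots,c_m^m)$ this becomes a linear system $G\,D_t^\alpha(c^m-c^m(0))+\mathcal A(t)c^m=F(t)$, with $G$ the invertible Gram matrix and $\mathcal A(t)$ the stiffness matrix; inverting $G$ and applying $J^\alpha$ recasts it as a vector-valued weakly singular Volterra equation $c^m=c^m(0)+J^\alpha[G^{-1}(F-\mathcal A c^m)]$ of Abel type. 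Existence and uniqueness of $c^m$ then follow directly from Theorem \ref{existence-Abel-equation} (or, for constant coefficients, from the Mittag-Leffler representation in Lemma \ref{Mittag-Leffler}(g)).

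The decisive step is the a priori estimate. Testing the Galerkin equation with $u_m$ and integrating over $(0,t)$, I would use the fractional coercivity inequality
\[
\langle D_t^\alpha(u_m-P_m x)(s),u_m(s)\rangle \ge \tfrac12\, D_t^\alpha\|u_m(s)-P_m x\|_H^2 + R_x(s),
\]
the fractional analogue of $\langle w',w\rangle=\tfrac12\frac{d}{dt}\|w\|^2$ (here $R_x$ collects the contribution of the shift by $x$), together with the G\aa rding estimate A2. After applying $J^\alpha$ and absorbing the indefinite term $d\|u_m\|_H^2$, the generalized Gronwall inequality of Lemma \ref{Mittag-Leffler}(h) yields
\[
\|u_m\|_{L^2(0,T;V)}+\|D_t^\alpha(u_m-P_m x)\|_{L^2(0,T;V')}\le M_0\big(\|x\|+\|f_\beta\|_{L^2(0,T;V')}\big),
\]
with $M_0$ independent of $m$, and crucially independent of $\alpha\in[\alpha_0,\alpha_1]$ and $\beta\in K$ because $M,c,d$ are uniform in $\beta$ and the Mittag-Leffler/Gronwall constants are uniform on $[\alpha_0,\alpha_1]$ by Lemma \ref{Mittag-Leffler}(a). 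Extracting $u_m\rightharpoonup u_{\alpha,\beta}$ in $L^2(0,T;V)$ and $D_t^\alpha(u_m-P_m x)\rightharpoonup\chi$ in $L^2(0,T;V')$, I would pass to the limit against $w_j\varphi(t)$ (using $P_m x\to x$ in $H$), identify $\chi=D_t^\alpha(u_{\alpha,\beta}-x)$, and verify that $u_{\alpha,\beta}\in W_\alpha(x,V,H)$ solves \eqref{Hilbert-abstract-equation}. Uniqueness follows by testing the homogeneous difference equation with the difference and re-running the coercivity-plus-Gronwall argument, which forces it to vanish; this gives \eqref{Galerkin-bound}.

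For part (b) I would argue by weak compactness and uniqueness rather than by a direct difference estimate, since A4$'$ is posed only on a dense subset and in the weaker $H'$-norm. By (a) the family $\{u_{\alpha_n,\beta_n}\}$ is bounded in $L^2(0,T;V)$ and $\{D_t^{\alpha_n}(u_{\alpha_n,\beta_n}-x)\}$ in $L^2(0,T;V')$, uniformly in $n$, so along a subsequence $u_{\alpha_n,\beta_n}\rightharpoonup u^{\ast}$ in $L^2(0,T;V)$. To pass to the limit I would test against $v\,\varphi$ with $v\in V$, $\varphi\in C_c^\infty(0,T)$ and shift the fractional derivative onto $\varphi$, writing the first term as $\int_0^T\langle J^{1-\alpha_n}(u_{\alpha_n,\beta_n}-x),v\rangle(-\varphi')\,dt$. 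Because $J^{1-\alpha_n}\to J^{1-\alpha}$ in operator norm on $L^2(0,T;V')$ by Lemma \ref{singular-integral}(b) while $u_{\alpha_n,\beta_n}-x\rightharpoonup u^{\ast}-x$ weakly, this converges to the corresponding $J^{1-\alpha}$-term; the bilinear term $\int_0^T\langle B_{\beta_n}(t)v,u_{\alpha_n,\beta_n}\rangle\varphi\,dt$ converges since $B_{\beta_n}(\cdot)v\to B_\beta(\cdot)v$ strongly (A4, or A4$'$ after a density argument in $H$) against the weakly convergent $u_{\alpha_n,\beta_n}$; and the forcing converges by A3. Hence $u^{\ast}$ solves \eqref{Hilbert-abstract-equation} for $(\alpha,\beta)$, and uniqueness from (a) gives $u^{\ast}=u_{\alpha,\beta}$ and convergence of the whole sequence (weakly) in $L^2(0,T;V)$. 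Under the compact embedding $V\hookrightarrow\hookrightarrow H$, the uniform bounds in $L^2(0,T;V)$ and $L^2(0,T;V')$ place the sequence in the hypotheses of a fractional Aubin--Lions--type compactness theorem, yielding relative compactness in $L^2(0,T;H)$ and hence strong convergence in $L^2(0,T;V')$.

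The principal obstacles I anticipate are two. First, establishing and correctly book-keeping the fractional coercivity inequality for $\langle D_t^\alpha(u-x),u\rangle$: the absence of an exact fractional product rule forces one to carry both the shift by $x$ and a nonnegative memory term through the energy estimate, and this is the technical heart of the uniform bound \eqref{Galerkin-bound}. Second, in part (b), reconciling the weak convergence of $u_{\alpha_n,\beta_n}$ with the $n$-dependent operator $J^{1-\alpha_n}$ in the limit passage, as well as running the compactness argument with varying order $\alpha_n$; the order-continuity estimate of Lemma \ref{singular-integral}(b) is precisely the tool that neutralises this second difficulty.
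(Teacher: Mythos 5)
Your proof of part (b) is essentially the paper's own argument: uniform boundedness from \eqref{Galerkin-bound}, extraction of a weakly convergent subsequence, passage to the limit in the weak formulation by splitting $J^{1-\alpha_n}$ as $(J^{1-\alpha_n}-J^{1-\alpha})+J^{1-\alpha}$ and invoking Lemma \ref{singular-integral}(b), treating the bilinear term via A4 (or A4$'$ with the $H\times H'$ pairing and density), identification of the limit by uniqueness, and finally Lions' compactness theorem for $H^\gamma(0,T;V,V')$ with $\gamma\in(0,\alpha_0)$ to upgrade to strong convergence in $L^2(0,T;V')$. The only place you genuinely diverge is part (a): the paper does not prove it at all but cites Zacher, whereas you reconstruct the Galerkin scheme with the fractional coercivity inequality $\langle D_t^\alpha w,w\rangle\ge\tfrac12 D_t^\alpha\|w\|_H^2$ and the fractional Gronwall lemma; this is in substance what the cited reference does, so nothing is lost, and your sketch correctly isolates the uniformity in $(\alpha,\beta)$ of the constant $M_0$, which is the point the paper actually needs. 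One small remark in your favour: you are explicit that the convergence of the full sequence in $L^2(0,T;V)$ obtained this way is only weak, which is all the paper's own argument delivers as well, even though its statement reads as if it were strong.
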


\begin{proof}
	The proof of part (a) is given  in \cite{Zacher}. We now prove part (b) of the theorem. By (\ref{Galerkin-bound}) there exist $u\in L^2(0,T;V)$ and a subsequence $u_{\alpha_{n_k},\beta_{n_k}}$, still denote by $u_{\alpha_n,\beta_n}$, such that
	$$u_{\alpha_n,\beta_n}\to u \ \ \ \text{ as } n\to\infty\text{ in }
	L^2(0,T;V).$$
	We will prove that $u$ satisfies the equation
	\begin{equation}
		\langle D^\alpha_t(u-x),v\rangle+a_\beta(t,u,v)=\langle f_\beta,v\rangle,\ \ \ \forall v\in V.
		\label{Galerkin-limit-equation}
	\end{equation}
	We have
	$$ -\int_0^T\varphi'(t)\langle J^{1-\alpha_n}(u_{\alpha_n,\beta_n}-x),v\rangle dt+
	\int_0^T\varphi(t)a_{\beta_n}(t,u_{\alpha_n,\beta_n}(t),v)dt=\int_0^T\varphi(t)\langle f_{\beta_n}(t),v\rangle_{V'\times V}dt.  $$
	We consider the first term of the equality
	\begin{align*}
		\int_0^T\varphi'(t)\langle J^{1-\alpha_n}(u_{\alpha_n,\beta_n}-x),v\rangle dt &=
		\int_0^T\varphi'(t)\langle (J^{1-\alpha_n}-J^{1-\alpha})(u_{\alpha_n,\beta_n}-x),v\rangle dt\\
		&+ \int_0^T\varphi'(t)\langle J^{1-\alpha}u_{\alpha_n,\beta_n},v\rangle dt\equiv I_{1n}+I_{2n}.
	\end{align*}
	We have
	$$ |I_{1,n}|\leq C\Vert \varphi'\Vert_{L^\infty(0,T)}\Vert v\Vert_{L^2(0,T;V)}
	\Vert (J^{1-\alpha_n}-J^{1-\alpha})(u_{\alpha_n,\beta_n}-x)\Vert_{L^2(0,T;V')}.$$
	Using Lemma \ref{singular-integral} (b) gives
	$$ |I_{1,n}|\leq C\Vert \varphi'\Vert_{L^\infty(0,T)}\Vert v\Vert_{L^2(0,T;V)}|\alpha_n-\alpha|\to 0\ \ \ {\rm as}\ n\to\infty. $$
	On the other hand, $I_{2n}\to \int_0^T\varphi'(t)\langle J^{1-\alpha}u,v\rangle dt $, Hence
	$$  \lim_{n\to\infty}\int_0^T\varphi'(t)\langle J^{1-\alpha_n}u_{\alpha_n,\beta_n},v\rangle dt=\int_0^T\varphi'(t)\langle J^{1-\alpha}u,v\rangle dt.$$
	Now, we consider the second term of (\ref{Galerkin-limit-equation}). If A4) holds then
	\begin{align*}
		\int_0^T\varphi(t)a_{\beta_n}(t,u_{\alpha_n,\beta_n},v)dt&=
		\int_0^T\langle u_{\alpha_n,\beta_n}, B_{\beta_n}(t)v-B_{\beta}(t)v\rangle_{V\times V'}+
		\int_0^T\varphi(t)\langle u_{\alpha_n,\beta_n},B_\beta v\rangle_{V\times V'}dt\\
		& \to \int_0^T\varphi(t)\langle u,B_\beta(t) v\rangle_{V\times V'}dt=
		\int_0^T\varphi(t)a_{\beta}(t,u,v)dt\ \ \ {\rm as}\ n\to\infty.
	\end{align*}
	If A4') holds then
	\begin{align*}
		\int_0^T\varphi(t)a_{\beta_n}(t,u_{\alpha_n,\beta_n},v)dt&=
		\int_0^T\langle u_{\alpha_n,\beta_n}, B_{\beta_n}(t)v-B_{\beta}(t)v\rangle_{H\times H'}+
		\int_0^T\varphi(t)\langle u_{\alpha_n,\beta_n},B_\beta v\rangle_{H\times H'}dt\\
		& \to \int_0^T\varphi(t)\langle u,B_\beta(t) v\rangle_{H\times H'}dt=
		\int_0^T\varphi(t)a_{\beta}(t,u,v)dt\ \ \ {\rm as}\ n\to\infty.
	\end{align*}
	These limits imply that the function $u$ is the weak solution of (\ref{Galerkin-limit-equation}). By the uniqueness, we have $u=u_{\alpha,\beta}$.
	Since the limit holds for every subsequence of $u_{\alpha_n,\beta_n}$, we obtain $u_{\alpha_n,\beta_n}\to u_{\alpha,\beta}$ as $n\to\infty$
	in $L^2(0,T;V)$. Finally, for $\gamma\in (0,\alpha_0)$,  we have
	$u_{\alpha_n,\beta_n}, u_{\alpha,\beta}\in H^\gamma(0,T; V,V')$.
	If $V$ is compactly embedded into $H$ then Theorem 5.2 in \cite{Lions}, page 61, implies that $H^\gamma(0,T; V,V')$ is compactly embedded in $L^2(0,T;V')$. Hence
	$$ \lim_{n\to\infty}\Vert u_{\alpha_n,\beta_n}-u_{\alpha,\beta}\Vert_{L^2(0,T;V')}=0.$$
\end{proof}
\subsection{Continuity of time fractional equations with  a countable spectrum in a Hilbert space}
The continuity of the solutions  in  Theorem \ref{Hilbert-case} are  very weak. To get the stronger continuity, we  consider  an {\bf initial value  problem in a Hilbert space}. In particular we consider operators that have a countable spectrum. Let  $H$ be a Hilbert space with the inner-product
$\langle.,.\rangle$ and the norm $\Vert.\Vert$.  Let $A: D(A)\rightarrow H$ be an operator defined on the subset $D(A)$ (called the domain of $A$) which is dense in $H$. We assume that $A$
has the eigenvectors $\phi_p\in D(A)$ corresponding to the eigenvalues $\lambda_p$,
i.e.,
$$
A\phi_p=\lambda_p\phi_p\ \mathrm{for}\ p=1,2,...
$$
We also assume that $0<\lambda_p\leq \lambda_{p+1}$, $\lim_{p\to\infty}\lambda_p=\infty$ and that
$\{\phi_p\}$ is an orthonormal basis of $H$.
For $s \ge 0$, we denote by $H^s \subset H$ the Hilbert space  as collection of function with the bounded norm induced by the following
$$
\|v\|_{H^{s}} =\sqrt{\sum\limits_{p=1}^{\infty}  \la_p^{2s} |\langle v,\phi_{p}\rangle|^{2}},
$$
where $H^0=H$ and $H^1=D(A)$. For $\gamma\ge 0$, we denote by $L^2_{\gamma,s}(T)$ the Hilbert space of functions $f:(0,T)\to H^s$ such that
$$ \Vert f\Vert^2_{L^2_{\gamma,s}(T)}=\int_0^Tt^{2\gamma}\Vert f(t)\Vert^2_{H^s}dt<\infty.   $$
For a constant $\beta>0,$
using the spectral theory, (fractional) powers $A^\beta$ can be defined by
\begin{equation}\label{spectral-frac-operator}
A^\beta v  := \sum_{k=1}^\infty \lambda_p^{\beta} \langle v,\phi_p\rangle \phi_p,
\end{equation}
with $D(A^\beta)=H^{\beta}$. For $\theta\in H^\beta$, $f\in L^2(0,T;H)$, we consider the forward problem for inhomogeneous time-fractional diffusion
\begin{equation}
\left\{ \begin{gathered}
\partial_t^{\alpha}u_{\alpha,\beta,\theta,f} +A^\beta u_{\alpha,\beta,\theta,f} =f(t),\ \ \ \ t \in (0,T),\hfill \\
u_{\alpha,\beta,\theta,f}(0) =\theta, \hfill\\
\end{gathered}   \right.
\label{fractional-diffusion}
\end{equation}
which can be rewritten as
$$
	D_t^{\alpha}(u_{\alpha,\beta,\theta,f}-\theta) +A^\beta u_{\alpha,\beta,\theta,f} =f(t),\ \ \ \ t \in (0,T).
$$
We can define the weak solution of the problem as in the general case with $V=H^{\beta/2}$ and
$$  a_\beta(t;u,v)= \sum_{p=1}^\infty \lambda^\beta_p\langle u,\phi_p\rangle\langle v,\phi_p\rangle\ \ \ \ \forall u,v\in V.$$
To consider the problem \eqref{fractional-diffusion} conveniently, we write $u_{\alpha,\beta,\theta,f}=v_{\alpha,\beta,\theta}+w_{\alpha,\beta,f}$ where
$v_{\alpha,\beta,\theta}:(0,T)\to H$ satisfies the homogeneous problem
\begin{equation}
\left\{ \begin{gathered}
\partial_t^{\alpha}v_{\alpha,\beta,\theta} +A^\beta v_{\alpha,\beta,\theta} =0,\ \ \ \ t \in (0,T),\hfill \\
v_{\alpha,\beta,\theta}(0) =\theta, \hfill\\
\end{gathered}   \right.
\label{homogeneous-fractional-diffusion}
\end{equation}
and  $w_{\alpha,\beta,f}:(0,T)\to H$ satisfies the nonhomogeneous problem with zero initial value
\begin{equation}
\left\{ \begin{gathered}
\partial_t^{\alpha}w_{\alpha,\beta,f} +A^\beta w_{\alpha,\beta,f} =f(t),\ \ \ \ t \in (0,T),\hfill \\
w_{\alpha,\beta,f}(0) =0. \hfill\\
\end{gathered}   \right.
\label{nonhomogeneous-fractional-diffusion}
\end{equation}
We have the following theorem which establishes the existence, uniqueness, and continuity  of solutions of the homogeneous equation \eqref{homogeneous-fractional-diffusion}.

\begin{theorem}\label{homogeneous-fractional-diffusion-theorem}
	Let $s\geq 0,r\geq 0$, $0<\alpha_0<\alpha_1<1$, $0<\beta_0<\beta_1$,  $\alpha\in [\alpha_0,\alpha_1]$ and $\beta_0\leq \beta\leq \beta_1$.
	\begin{enumerate}[ \bf \upshape (a)]
		\item For $\theta\in H^s$, the problem (\ref{homogeneous-fractional-diffusion}) has a unique solution
		$$v_{\alpha,\beta,\theta}\in C([0,T]; H^{s})\cap
		C_\alpha(T; H^{\beta+s}), \partial_t^\alpha v_{\alpha,\beta,\theta}\in C_\alpha(T,H^s)$$
		which has the form
		\begin{equation}\label{solution-homogeneous-fractional-diffusion}
		v(t):=v_{\alpha,\beta,\theta,f}(t)= \sum_{p=1}^\infty F_{\alpha,\beta,\theta,p}(t)\phi_p
		\end{equation}
		where
		\begin{align*}
		F_{\alpha,\beta,\theta,k}(t)&= \langle \theta,\phi_k\rangle E_{\alpha,1}(-\lambda^\beta_kt^\alpha).
		\end{align*}
		Moreover, there exists a constant $C$ independent of $\alpha,\beta,\theta$ such that
		\begin{align*}
		\Vert v\Vert_{C([0,T],H^s)}
		+\Vert v\Vert_{C_\alpha(T,H^{\beta+s})}+
		\Vert \partial_t^\alpha v\Vert_{C_\alpha(T,H^s)}&\leq
		C\Vert\theta\Vert_{H^s}.
		\end{align*}
		If, in addition, $s\ge\beta$   then
		$$   \partial_t^\alpha v_{\alpha,\beta,\theta}\in  C([0,T],H^{s-\beta}) $$
		and
		$$  \Vert \partial^\alpha_tv\Vert^2_{C([0,T];H^{s-\beta})}\leq C\Vert \theta\Vert^2_{H^s}. $$
		\item { Let $s\geq 0$, $\alpha,\alpha'\in (0,1)$, $\beta'\in [\beta_0,\beta_1]$, $\theta,\theta'\in H^s$.
			\begin{enumerate}[\bf \upshape (i)]
				\item If  $\theta'\to\theta$ in $H^s$, $\alpha'\to\alpha$, $\beta'\to\beta$ in $\mathbb{R}$ then
				$$ \Vert v_{\alpha',\beta', \theta'}-v_{\alpha,\beta, \theta}\Vert_{H^s}\rightarrow 0. $$
				\item If $\theta,\theta'\in H^s$, $s>\rho\geq 0$, $\alpha'\in [\alpha_0,\alpha_1],$ $\beta'\in[\beta_0,\beta_1]$ then
				there exists a constant $C=C(\alpha_0,\alpha_1,\beta_0,\beta_1,s,\rho)$ such that
				$$ \Vert v_{\alpha',\beta',\theta'}(.,t)-v_{\alpha,\beta,\theta}(.,t)\Vert_{H^\rho}^2\leq
				C\Vert\theta'-\theta\Vert_{H^s}^2+C\Vert\theta\Vert^2_{H^s}(|\alpha'-\alpha|+|\beta'-\beta)|)^{2\gamma}.  $$
				where $\gamma=\min\{1,(s-\rho)/{\beta_1}\}$.
			\end{enumerate}
		}
	\end{enumerate}
\end{theorem}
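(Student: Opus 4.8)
The plan is to solve \eqref{homogeneous-fractional-diffusion} by the spectral method. Writing $v_{\alpha,\beta,\theta}(t)=\sum_p v_p(t)\phi_p$ with $v_p(t)=\langle v(t),\phi_p\rangle$ and projecting the equation onto each eigenvector, the relation $A^\beta\phi_p=\lambda_p^\beta\phi_p$ forces the coefficient $v_p$ to solve the scalar fractional initial value problem $\partial_t^\alpha v_p+\lambda_p^\beta v_p=0$, $v_p(0)=\theta_p:=\langle\theta,\phi_p\rangle$. By the discussion preceding Lemma \ref{mittag-leffler-lemma} (the problem $\partial_t^\alpha\phi_\lambda=\lambda\phi_\lambda$, $\phi_\lambda(0)=1$, has the unique solution $E_{\alpha,1}(\lambda t^\alpha)$) this gives $v_p(t)=\theta_p E_{\alpha,1}(-\lambda_p^\beta t^\alpha)=F_{\alpha,\beta,\theta,p}(t)$, which simultaneously yields uniqueness (every solution has exactly these coefficients, so one may alternatively invoke Theorem \ref{Hilbert-case}(a)) and the explicit formula \eqref{solution-homogeneous-fractional-diffusion}.

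For part (a), the quantitative bounds come from Lemma \ref{Mittag-Leffler}. The key inputs are $0\le E_{\alpha,1}(z)\le 1$ for $z\le0$ and part (e) of that lemma, which gives $0\le E_{\alpha,1}(-\lambda_p^\beta t^\alpha)\le \frac{C^+}{\Gamma(1-\alpha)}\frac{1}{1+\lambda_p^\beta t^\alpha}$. The first yields $\Vert v(t)\Vert_{H^s}^2=\sum_p\lambda_p^{2s}|\theta_p|^2 E_{\alpha,1}(-\lambda_p^\beta t^\alpha)^2\le\Vert\theta\Vert_{H^s}^2$, hence the $C([0,T];H^s)$ bound; continuity in $t$ follows since each term is continuous and $\sum_{p>N}\lambda_p^{2s}|\theta_p|^2$ is a uniformly small remainder. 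For the $C_\alpha(T;H^{\beta+s})$ bound I use $t^{2\alpha}\lambda_p^{2\beta}E_{\alpha,1}(-\lambda_p^\beta t^\alpha)^2\le C\big(\tfrac{\lambda_p^\beta t^\alpha}{1+\lambda_p^\beta t^\alpha}\big)^2\le C$, so $t^{2\alpha}\Vert v(t)\Vert_{H^{\beta+s}}^2\le C\Vert\theta\Vert_{H^s}^2$. Since the equation gives termwise $\partial_t^\alpha v=-A^\beta v$ (using $\partial_t^\alpha E_{\alpha,1}(-\lambda_p^\beta t^\alpha)=-\lambda_p^\beta E_{\alpha,1}(-\lambda_p^\beta t^\alpha)$), we get $\Vert\partial_t^\alpha v(t)\Vert_{H^s}=\Vert v(t)\Vert_{H^{\beta+s}}$, and when $s\ge\beta$, $\Vert\partial_t^\alpha v(t)\Vert_{H^{s-\beta}}^2=\sum_p\lambda_p^{2s}|\theta_p|^2E_{\alpha,1}(-\lambda_p^\beta t^\alpha)^2\le\Vert\theta\Vert_{H^s}^2$, giving the last two assertions; termwise differentiation is legitimate because the differentiated series converges in the stated norm by the same estimates.

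For part (b)(i) I split $v_{\alpha',\beta',\theta'}-v_{\alpha,\beta,\theta}=v_{\alpha',\beta',\theta'-\theta}+(v_{\alpha',\beta',\theta}-v_{\alpha,\beta,\theta})$. The first piece is $\le\Vert\theta'-\theta\Vert_{H^s}\to0$ by the part (a) bound and linearity in $\theta$. For the second, with $\theta$ fixed, $\Vert v_{\alpha',\beta',\theta}-v_{\alpha,\beta,\theta}\Vert_{H^s}^2=\sum_p\lambda_p^{2s}|\theta_p|^2\big|E_{\alpha',1}(-\lambda_p^{\beta'}t^{\alpha'})-E_{\alpha,1}(-\lambda_p^\beta t^\alpha)\big|^2$, where each summand tends to $0$ by continuity of $(\alpha,\beta)\mapsto E_{\alpha,1}(-\lambda_p^\beta t^\alpha)$ (Lemma \ref{Mittag-Leffler}) and is dominated by the summable $4\lambda_p^{2s}|\theta_p|^2$; the dominated convergence theorem for series gives the limit $0$ (uniformly in $t\in[0,T]$ after a tail estimate plus uniform continuity on the compact parameter box).

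For part (b)(ii) the same splitting reduces matters to the $H^\rho$-estimate of the second piece, the data piece being controlled by $\Vert v_{\alpha',\beta',\theta'-\theta}\Vert_{H^\rho}^2\le C\Vert\theta'-\theta\Vert_{H^s}^2$ via the embedding $H^s\hookrightarrow H^\rho$ (constant $\lambda_1^{2(\rho-s)}$). The main work, and the expected obstacle, is to bound
$$\sum_p\lambda_p^{2\rho}|\theta_p|^2|D_p|^2,\qquad D_p:=E_{\alpha',1}(-\lambda_p^{\beta'}t^{\alpha'})-E_{\alpha,1}(-\lambda_p^\beta t^\alpha).$$
Here I interpolate geometrically between the uniform bound $|D_p|\le C$ and the parameter-Lipschitz bound of Lemma \ref{mittag-leffler-lemma}(b), namely $|D_p|\le C\lambda_p^{\beta_1}|\ln\lambda_p|(|\alpha'-\alpha|+|\beta'-\beta|)$, obtaining $|D_p|\le C\big(\lambda_p^{\beta_1}|\ln\lambda_p|\big)^{\gamma}(|\alpha'-\alpha|+|\beta'-\beta|)^{\gamma}$ for any $\gamma\in[0,1]$. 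Choosing $\gamma=\min\{1,(s-\rho)/\beta_1\}$ makes $\lambda_p^{2\rho+2\beta_1\gamma}\le\lambda_p^{2s}$, so the series is dominated by $(|\alpha'-\alpha|+|\beta'-\beta|)^{2\gamma}\sum_p\lambda_p^{2s}|\theta_p|^2=\Vert\theta\Vert_{H^s}^2(\ldots)^{2\gamma}$ up to the residual factor $|\ln\lambda_p|^{2\gamma}$. The delicate point is exactly this logarithm at the critical exponent $\gamma=(s-\rho)/\beta_1$: when that ratio is $<1$ there is a strictly positive power gap $2s-2\rho-2\beta_1\gamma$ into which $\lambda_p^{-\varepsilon}|\ln\lambda_p|^{2\gamma}$ is absorbed (using boundedness of $\lambda^{-\varepsilon}|\ln\lambda|$ on $[\lambda_1,\infty)$), while the finitely many modes with $\lambda_p\le1$ are handled directly by the parameter-Lipschitz continuity of $E_{\alpha,1}$ on compact sets. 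Combining the data and parameter pieces gives the stated estimate, and I expect this interpolation-plus-logarithm bookkeeping to be the crux of the proof.
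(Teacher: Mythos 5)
Your treatment of part (a) and of (b)(i) is essentially the paper's own argument: the paper phrases (a) as a Galerkin approximation on $V_n=\mathrm{span}\{\phi_1,\dots,\phi_n\}$, but the coefficients are computed exactly as in your direct spectral projection, and the bounds all reduce to $0\le E_{\alpha,1}(z)\le 1$ and Lemma \ref{Mittag-Leffler}(e) in the form $E_{\alpha,1}(-\lambda_p^\beta t^\alpha)\le C(1+\lambda_p^\beta t^\alpha)^{-1}$, with $\partial_t^\alpha v=-A^\beta v$ identified termwise. Your dominated-convergence argument for (b)(i) is equivalent to the paper's tail splitting at a cutoff $N$.

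In (b)(ii), however, there is a concrete arithmetic error that breaks your absorption of the logarithm. With $\gamma=(s-\rho)/\beta_1<1$ you have $2\rho+2\beta_1\gamma=2s$ \emph{exactly}, so the ``strictly positive power gap $2s-2\rho-2\beta_1\gamma$'' you invoke is zero; the residual factor $|\ln\lambda_p|^{2\gamma}$ then sits inside the series as $\sum_p\lambda_p^{2s}|\ln\lambda_p|^{2\gamma}|\theta_p|^2$, which is not controlled by $\Vert\theta\Vert_{H^s}^2$ --- it demands strictly more regularity of $\theta$. (The gap is positive only in the opposite regime $s-\rho>\beta_1$, where $\gamma=1$.) The paper avoids putting a $p$-dependent log inside the sum by a different mechanism: it keeps the Lipschitz bound only for the low modes $p\le N$, bounds that block by $\lambda_N^{2(\beta_1+\rho-s)}\Vert\theta\Vert_{H^s}^2\,(|\alpha'-\alpha|+|\beta'-\beta|)^2$, uses the trivial bound $|D_p|\le 2$ and $\lambda_p^{2\rho}\le\lambda_{N+1}^{-2(s-\rho)}\lambda_p^{2s}$ for the tail, and then balances by choosing $\lambda_{N+1}^{-1}\le(|\alpha'-\alpha|+|\beta'-\beta|)^{1/\beta_1}\le\lambda_N^{-1}$. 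You should adopt this cutoff; with it, the logarithm from Lemma \ref{mittag-leffler-lemma}(b) appears only as $|\ln\lambda_N|\simeq|\ln(|\alpha'-\alpha|+|\beta'-\beta|)|/\beta_1$, a single factor depending on the parameter increment rather than a loss of regularity in $\theta$. (To be fair, the paper itself silently drops this $|\ln\lambda|$ when it quotes Lemma \ref{mittag-leffler-lemma}(b) at the start of its proof of (b), so the endpoint exponent $2\gamma$ carries an unacknowledged logarithmic correction there as well; but your version of the difficulty is strictly worse than the paper's, and your stated justification for dismissing it is incorrect as written.)
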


\begin{remark}
	For a large class of operators $A$,    Chen et al. \cite{cmn-2012} and Meerschaert et al. \cite{mnv-09}  showed that the solution to equation \eqref{homogeneous-fractional-diffusion} when $\beta=1$,  can be also represented as follows
	\begin{eqnarray}
		u(t,x)&=&\sum_{p=1}^\infty E_{\alpha,1}(-\lambda_pt^\alpha) \langle \theta,\phi_p\rangle \phi_p(x)\nonumber\\
		&=&\mathbb{E}_{x}[\theta(X(E_{t})); \tau_\Omega(X)> E_t]\nonumber\\
		&=& \frac{t}{\alpha}\int_{0}^{\infty}T_\Omega(u)(\theta(x))g_{\alpha}
		(tl^{-1/\alpha })u^{-1/\alpha -1}du= \int_{0}^{\infty}T_\Omega((t/u)^\alpha)(\theta(x))g_{\alpha}
		(l)dl. \nonumber
	\end{eqnarray}
	where $X$ is a process such that $v(t,x)=\mathbb{E}_x(\theta(X(t)), \tau_\Omega>t)=T_\Omega(t)(\theta(x))$ solves the equation \eqref{homogeneous-fractional-diffusion} when $\beta=1$, $\tau_\Omega=\inf \{s>0: X(s)\notin \Omega\}$ is the first exit time of $X$ from $\Omega$, $g_\alpha$ is the density of a stable subordinator $Y_t$  of index $\alpha\in (0,1)$ with the Laplace transform
	$E(e^{-sY_t})=e^{-ts^\alpha}$, and $E_t=\inf\{\tau>0: Y(
	\tau)>t\}$ is the inverse  of $Y$.
\end{remark}

\begin{remark}
In Theorem \ref{homogeneous-fractional-diffusion-theorem} when $\beta=1$,  The operators $A$ include Laplacian $\Delta$ in a bounded domain $\Omega$ in a Euclidean space $\R^d$ with Dirichlet boundary condition, and fractional Laplacian $-(-\Delta)^\gamma$, $\gamma\in (0,1)$ with exterior  Dirichlet  boundary conditions.  The process mentioned in previous remark corresponding to the Laplacian  is  the killed Brownian motion.  The process that corresponds to fractional Laplacian is a symmetric stable process. This fractional Laplacian is different operator than  the operator  one gets by taking the powers of the Laplacian with Dirichlet boundary condition using the spectral theory that was mentioned in equation \eqref{spectral-frac-operator}, see Chen et al. \cite{cmn-2012} for more details. The most important  differences are  the facts  that the eigenvalues are not powers of the eigenvalues of the Laplacian, and the eigenfunctions of the fractional Laplacian are not the same as the eigenfunctions of the Laplacian.
\end{remark}
\begin{proof}[{\bf Proof of Theorem \ref{homogeneous-fractional-diffusion-theorem}}]
	We first prove the existence of solution of the problem (\ref{fractional-diffusion}) by using the Galerkin method combined with the spectral method as in \cite{Sakamoto}. Let
	$V_n=span\{\phi_1,...,\phi_n\}$, $\theta_n=\sum_{p=1}^n\langle\theta,\phi_p\rangle\phi_p$. We  consider the problem of finding $v_n\in
	C([0,T];V_n)$ such that
	$$  \frac{d}{dt}\langle J^{1-\alpha}(v_n-\theta_n,\phi\rangle+a_\beta(v_n,\phi)=0,\ \ \ \
	\forall \phi\in V_n. $$
	Put
\begin{align*}
		v_n(t):= v_{\alpha,\beta,\theta, n}(t)&=\sum_{p=1}^n F_{\alpha,\beta,\theta,p}(t)\phi_p,\\
z_n(t):=z_{\alpha,\beta,\theta, n}(t) &=-\sum_{p=1}^n \lambda_p^\beta c_{p}(t)\phi_p
	\end{align*}
 and choosing $\phi=\phi_k$, $k=\overline{1,n}$ we will obtain the fractional differential equation
	$$ \frac{d}{dt} J^{1-\alpha}\left( c_{nk}-\langle\theta,\phi_k\rangle\right)+\lambda_k^\beta c_{nk}
	=0   $$
	which gives
	$$   c_{nk}(t)=F_{\alpha,\beta,\theta,k}(t).$$
Since $c_{nk}$ is independent of $n$, we will write $c_{nk}=c_k$ and we obtain
	
{\bf Proof of (a):}
	For $0\leq\rho\leq \beta$, direct computation gives
	\begin{eqnarray*}		
\lambda^{2(\rho+s)}_p|F_{\alpha,\beta,\theta,p}(t)|^2
		\leq   		
C\frac
{\lambda^{2(\rho+s)}_p|\langle \theta,\phi_p\rangle|^2}
{(1+\lambda_p^\beta t^\alpha)^2} .
	\end{eqnarray*}
Hence, for $\rho=0$, we obtain
$$ \lambda^{2s}_p|F_{\alpha,\beta,\theta,p}(t)|^2
		\leq C\lambda^{2s}_p|\langle \theta,\phi_p\rangle|^2
\ \ {\rm and}\ \sum_{p=1}^\infty\lambda^{2s}_p|\langle \theta,\phi_p\rangle|^2=\Vert \theta\Vert^2_{H^s}<\infty.$$
Hence we deduce that $\{v_n\}$ is uniformly convergent to the function
$$	v(t)=\sum_{p=1}^\infty F_{\alpha,\beta,\theta,p}(t)\phi_p$$
in $C([0,T];H^s)$ and that
\begin{eqnarray*}
		\Vert v(t)\Vert^2_{H^s}
		\leq  C\sum_{p=1}^\infty 		
\lambda^{2s}_p|\langle \theta,\phi_p\rangle|^2=C\Vert \theta\Vert^2_{H^s}.
	\end{eqnarray*}
Choosing $\rho=\beta$, we obtain
	\begin{eqnarray*}		
\lambda^{2(\beta+s)}_p|F_{\alpha,\beta,\theta,p}(t)|^2
		&\leq &   		
C\frac
{\lambda^{2(\beta+s)}_p|\langle \theta,\phi_p\rangle|^2}
{(1+\lambda_p^\beta t^\alpha)^2},\\
\frac
{t^{2\alpha}\lambda^{2(\beta+s)}_p|\langle \theta,\phi_p\rangle|^2}
{(1+\lambda_p^\beta t^\alpha)^2}
&\leq &
\sum_{p=1}^\infty \lambda^{2s}_p|\langle \theta,\phi_p\rangle|^2=
C\Vert \theta\Vert^2_{H^s}.
	\end{eqnarray*}
We deduce that $\{v_n\}$ is uniformly convergent to the function $v$ in $C_{\alpha}(T,H^{\beta+s})$ and
\begin{eqnarray*}
		t^{2\alpha}\Vert v(t)\Vert^2_{H^{\beta+s}}
		\leq  \sum_{p=1}^\infty 		
C\frac
{t^{2\alpha}\lambda^{2(\beta+s)}_p|\langle \theta,\phi_p\rangle|^2}
{(1+\lambda_p^\beta t^\alpha)^2}
\leq
C\sum_{p=1}^\infty \lambda^{2s}_p|\langle \theta,\phi_p\rangle|^2=
C\Vert \theta\Vert^2_{H^s}.
	\end{eqnarray*}
We deduce that $\{v_n\}$ is uniformly convergent to the function $v$ in $C([0,T];H^s)\cap C_{\alpha}(T,H^{\beta+s})$.
Combining these inequalities, we obtain
$$  \Vert v\Vert^2_{C([0,T];H^s)}+ \Vert v\Vert^2_{C_\alpha(T;H^{\beta+s})} \leq C\Vert \theta\Vert^2_{H^s}.    $$
To estimate $\partial^\alpha_t v_n$ we have
\begin{align*}
\lambda_p^{2s}|\lambda_p^\beta c_{p}(t)|^2 &= \lambda_p^{2(\beta+s)}|F_{\alpha,\beta,\theta,p}(t)|^2
\end{align*}
and
\begin{align*}
 t^{2\alpha}\sum_{p=1}^\infty \lambda_p^{2(\beta+s)}|F_{\alpha,\beta,\theta,p}(t)|^2\leq
C\sum_{p=1}^\infty\frac
{t^{2\alpha}\lambda^{2(\beta+s)}_p|\langle \theta,\phi_p\rangle|^2}
{(1+\lambda_p^\beta t^\alpha)^2}\leq C\sum_{p=1}^\infty
\lambda^{2s}_p|\langle \theta,\phi_p\rangle|^2=\Vert\theta\Vert^2_{H^s}<\infty.
\end{align*}
Hence, $z_n$ converge uniformly to a function $z$ in $C_\alpha(T,H^s)$ and
$$ \Vert z\Vert^2_{C_\alpha(T,H^s)}\leq C\Vert \theta\Vert^2_{H^s}. $$
 We can verify as in Theorem \ref{Hilbert-case} that $v$ is a solution of the problem
(\ref{homogeneous-fractional-diffusion}). Moreover,
since $z_n=\partial^\alpha_t v_n$, we can obtain by integration by parts
$$ \int_0^T\varphi(t)z_n(t)dt=-\int_0^T\varphi'(t)J^{1-\alpha}v_n(t)dt\ \ \ \ \ {\rm for\ all}\ \varphi\in C_c^\infty(0,T).   $$
Let $n\to\infty$ gives
$$ \int_0^T\varphi(t)z(t)dt=-\int_0^T\varphi'(t)J^{1-\alpha}v(t)dt\ \ \ \ \ {\rm for\ all}\ \varphi\in C_c^\infty(0,T). $$
Hence $z=\partial^\alpha_t v.$
Combining the estimates for $v,\partial_t^\alpha v$ gives
	\begin{align*}
		\Vert v\Vert_{C([0,T],H^s)}
		+\Vert u\Vert_{C_\alpha(T,H^{\beta+s})}+
 \Vert \partial_t^\alpha u\Vert_{C_\alpha(T,H^s)}&\leq
C\Vert\theta\Vert_{H^s}.
	\end{align*}
Now, if $s\ge\beta$ and $f\in C([0,T]; H^r)$ then
\begin{align*}
\lambda_p^{2(s-\beta)}|\lambda_p^\beta c_{p}( t)|^2 =\lambda_p^{2(s-\beta) }\lambda_p^{2\beta}|F_{\alpha,\beta,\theta,p}(t)|^2
\leq \lambda_p^{2(s-\beta)}\lambda^{2\beta}_p|\langle \theta,\phi_p\rangle|^2
= C\lambda^{2s}_p|\langle \theta,\phi_p\rangle|^2,
\end{align*}
and
$$  \sum_{p=1}^\infty C\lambda^{2s}_p|\langle \theta,\phi_p\rangle|^2=
C\Vert \theta\Vert^2_{H^s}\leq\infty. $$
Hence $z_n$ converges uniformly to the function $\partial^\alpha_tv$ in $C([0,T];H^{s-\beta})$ and
$$  \Vert \partial^\alpha_tv\Vert^2_{C([0,T];H^{s-\beta})}
\leq C\Vert \theta\Vert^2_{H^s}. $$

{\bf Proof of (b):}
In the rest of the proof we will consider the continuity of the solution with respect to the parameter $\alpha,\beta$, the initial condition $\theta$.  From Part (b) of Lemma \ref{mittag-leffler-lemma}, we obtain
	\begin{align*}
		\Big| E_{\alpha,1}(-\lambda^\beta_k t) -E_{\alpha',1}(-\lambda^{\beta'}_k t)\Big|  \le C \la_k^{2\beta_1} \Big( |\beta'-\beta|+ |\alpha'-\alpha| \Big).
	\end{align*}
Recall that we have	
	\begin{align*}
		v_{\alpha,\beta, \theta}(t)= \sum_{p=1}^{\infty} E_{\alpha,1}(-\lambda^\beta_p t) \langle\theta,\varphi_p\rangle \varphi_p.
	\end{align*}
{	So, for $\rho>0$ we obtain
\begin{align*}
		\|v_{\alpha,\beta, \theta}(t) - v_{\alpha',\beta', \theta'}(t)\|^2_{H^\rho}
&\leq 2\sum_{p=1}^{\infty} \lambda_p^{2\rho}\Big[E_{\alpha,1}(-\lambda^\beta_p t) -E_{\alpha',1}(-\lambda^{\beta'}_p t)\Big]^2 \langle\theta, \varphi_p\rangle^2\Big]\\
&+ 2\sum_{p=1}^{\infty} \lambda_p^{2\rho}E^2_{\alpha',1}(-\lambda^{\beta'}_p t)
\langle\theta'-\theta, \varphi_p\rangle^2.
	\end{align*}	
We separate the first sum into two sums, one sum is from $p=1$ to $p=N$ and one sum is from
$p=N+1$ to infinity. Using the fact that $0\leq E_{\alpha,1}(z)\leq 1$ for $z\leq 0$ and Part (b) of Lemma \ref{mittag-leffler-lemma} we obtain
\begin{align*}
		\|v_{\alpha',\beta', \theta'}(t) - v_{\alpha,\beta, \theta}(t)\|^2_{H^\rho}
& \leq C(|\alpha'-\alpha|+|\beta'-\beta|)^2\sum_{p=1}^N \lambda_p^{2(\beta_1+\rho)} \langle\theta, \varphi_p\rangle^2 +\\
& 2\sum_{p=N+1}^\infty \lambda_p^{2\rho}\langle\theta, \varphi_p\rangle^2+ 2\Vert \theta'-\theta\Vert^2_{H^\rho}.
	\end{align*}
}

(i) We choose a sequence $(\alpha_n,\beta_n,\theta_n)\in (0,1)\times[\beta_0,\beta_1]\times H$ such that
$(\alpha_n,\beta_n,\theta_n)\to (\alpha,\beta,\theta)$. It follows that
$$   \limsup_{n\to\infty}\|v_{\alpha,\beta, \theta}(t) - v_{\alpha_n,\beta_n, \theta_n}(t)\|_{H^s}^2\leq 2\sum_{p=N+1}^\infty \lambda_p^{2s}\langle\theta, \varphi_p\rangle^2.$$
Letting  $N\to\infty$ gives
$$\limsup_{n\to\infty}\|v_{\alpha,\beta, \theta}(t) - v_{\alpha_n,\beta_n, \theta_n}(t)\|_{H^s}^2=0.   $$
Hence, we otain (i).\\
(ii) Now, if $\theta\in H^s$, $s>\beta_1+\rho$ then
\begin{align*}
		\|v_{\alpha',\beta', \theta'}(t) - v_{\alpha,\beta, \theta}(t)\|_{H^\rho}^2
& \leq C(|\alpha'-\alpha|+|\beta'-\beta|)^2\sum_{p=1}^\infty \lambda_p^{2(\beta_1+\rho)} \langle\theta, \varphi_p\rangle^2 +
 2\Vert \theta'-\theta\Vert_{H^\rho}^2\\
&\leq C(|\alpha'-\alpha|+|\beta'-\beta|)^2\sum_{p=1}^\infty  \lambda_p^{2s}\langle\theta, \varphi_p\rangle^2 +2\Vert \theta'-\theta\Vert_{H^s}^2\\
&\leq C(|\alpha'-\alpha|+|\beta'-\beta|)^2\Vert \theta\Vert^2_{H^s}+2\Vert \theta'-\theta\Vert_{H^s}^2.
	\end{align*}
If $\theta\in H^s$, $0<s\leq\beta_1+\rho$ then
\begin{align*}
		\|v_{\alpha',\beta', \theta'}(t) - v_{\alpha,\beta, \theta}(t)\|_{H^\rho}^2
& \leq C(|\alpha'-\alpha|+|\beta'-\beta|)^2\sum_{p=1}^N \lambda_p^{2(\beta_1+\rho)} \langle\theta, \varphi_p\rangle^2 +\\
& 2\sum_{p=N+1}^\infty \lambda_p^{2\rho}\langle\theta, \varphi_p\rangle^2+ 2\Vert \theta'-\theta\Vert_{H^\rho}^2\\
&\leq C(|\alpha'-\alpha|+|\beta'-\beta|)^2\lambda_N^{2(\beta_1+\rho-s)}\sum_{p=1}^N  \lambda_p^{2s}\langle\theta, \varphi_p\rangle^2 +\\
& 2\lambda_{N+1}^{-2(s-\rho)}\sum_{p=N+1}^\infty \lambda_p^{2s}\langle\theta, \varphi_p\rangle^2+ 2\Vert \theta'-\theta\Vert^2\\
& \leq C\left((|\alpha'-\alpha|+|\beta'-\beta|)^2\lambda_N^{2(\beta_1+\rho-s)}+2\lambda_{N+1}^{-2(s-\rho)}\right)\Vert \theta\Vert^2_{H^s}+2\Vert \theta'-\theta\Vert_{H^s}^2.
	\end{align*}
Choose $N$ such that $\lambda_{N+1}^{-1}\leq (|\alpha'-\alpha|+|\beta'-\beta|)^{1/{\beta_1}}\leq \lambda_N^{-1} $ gives
\begin{align*}
		\|v_{\alpha',\beta', \theta'}(t) - v_{\alpha,\beta, \theta}(t)\|_{H^\rho}^2
\leq
C(|\alpha'-\alpha|+|\beta'-\beta|)^{2(s-\rho)/{\beta_1}}\Vert \theta\Vert^2_{H^s}+2\Vert \theta'-\theta\Vert_{H^s}^2.
	\end{align*}
\end{proof}

Next we establish results for the non-homogeneous problem  \eqref{nonhomogeneous-fractional-diffusion} with zero initial value.

\begin{theorem}\label{nonhomogeneous-fractional-theorem}
	Let $r\geq 0$, $0<\alpha_0<\alpha_1<1$, $0<\beta_0<\beta_1$,  $\alpha\in [\alpha_0,\alpha_1]$ and $\beta_0\leq \beta\leq \beta_1$.
	\begin{enumerate}[\bf \upshape (a)]
		\item For  $f\in L^2(0,T;H^r)$,  the problem (\ref{nonhomogeneous-fractional-diffusion}) has a unique solution
		$$w_{\alpha,\beta,f}\in
		C([0,T]; H^{\beta+r}), \partial_t^\alpha w_{\alpha,\beta,f}\in L^2(0,T;H^r)$$
		which has the form
		\begin{equation}\label{solution-fractional-diffusion}
		w(t):=w_{\alpha,\beta,f}(t)= \sum_{p=1}^\infty G_{\alpha,\beta,f,p}(t)\phi_p
		\end{equation}
		where
		\begin{align*}
		G_{\alpha,\beta,f, k}(t)&=\int_0^t\langle f(t),\phi_k\rangle(t-\tau)^{\alpha-1}E_{\alpha,\alpha}(-\lambda^\beta_k(t-\tau)^\alpha)d\tau.
		\end{align*}
		Moreover, there exists a constant $C$ independent of $\alpha,\beta,f$ such that
		\begin{align*}
		\Vert w\Vert_{C([0,T],H^{\beta+ r})}+
		\Vert \partial_t^\alpha w\Vert_{L^2_{\alpha, r}(T)}&\leq
		C\Vert f\Vert_{L^2(0,T;H^r)}.
		\end{align*}
		If, in addition, $f\in C([0,T],H^r)$ then
		$$  \partial_t^\alpha w_{\alpha,\beta,f}\in  C([0,T],H^r) $$
		and
		$$  \Vert \partial^\alpha_tw\Vert^2_{C([0,T];H^r)}\leq C\Vert f\Vert^2_{C([0,T];H^r)}. $$
		\item For $f\in L^2(0,T;H^r)$, we have
		$$ w_{\alpha',\beta',f'}\rightarrow w_{\alpha,\beta,f}\ \ \ \text{ in } L^2(0,T;H^{\beta_0+r})$$
		as $(\alpha',\beta',f')\to (\alpha,\beta,f)$ in $\mathbb{R}\times\mathbb{R}\times L^2(0,T;H^r)$
		and
		\begin{align*}
		\Vert w_{\alpha',\beta',f'}-w_{\alpha,\beta,f}\Vert^2_{L^2(0,T;H^{\rho+r})}&\leq C_\delta
		(|\alpha-\alpha'|+|\beta-\beta'|)^{2(\beta_0-\rho)\mu} \Vert f\Vert^2_{L^2(0,T;H^r)}+\\
		&C\Vert f'-f\Vert^2_{L^2(0,T;H^{r})}
		\end{align*}
		for $\delta>0,0<\rho<\beta_0$ and $\mu=(\beta_0+\beta_1+\delta)^{-1}$.
	\end{enumerate}
\end{theorem}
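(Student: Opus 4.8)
The plan is to reduce everything to a single Fourier mode and then reassemble with the spectral norm. Writing $f_p(\tau)=\langle f(\tau),\phi_p\rangle$ and projecting \eqref{nonhomogeneous-fractional-diffusion} onto $\phi_p$, each coefficient $G_{\alpha,\beta,f,p}$ solves the scalar fractional Duhamel problem and is exactly the integral $G_{\alpha,-\lambda_p^\beta}$ of Lemma \ref{mittag-leffler-lemma}(c) applied to the datum $f_p$. For part (a) I would follow the Galerkin--spectral scheme already used in Theorem \ref{Hilbert-case} and Theorem \ref{homogeneous-fractional-diffusion-theorem}: solve the problem on $V_n=\mathrm{span}\{\phi_1,\dots,\phi_n\}$, identify the coefficients as $G_{\alpha,\beta,f,p}$, derive the stated a priori bounds, and pass to the limit, uniqueness being inherited from the weak formulation as in Theorem \ref{Hilbert-case}. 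The bounds come from estimating each mode with the decay estimate $|E_{\alpha,\alpha}(z)|\le C/(1+|z|)$ for $z\le 0$ of Lemma \ref{Mittag-Leffler}(a), the positivity $E_{\alpha,\alpha}\ge0$ of Lemma \ref{Mittag-Leffler}(d), and above all the identity $\lambda_p^\beta s^{\alpha-1}E_{\alpha,\alpha}(-\lambda_p^\beta s^\alpha)=-\tfrac{d}{ds}E_{\alpha,1}(-\lambda_p^\beta s^\alpha)$ coming from Lemma \ref{Mittag-Leffler}(f). This exhibits $\lambda_p^\beta$ times the Duhamel kernel as a nonnegative kernel of total mass $1-E_{\alpha,1}(-\lambda_p^\beta t^\alpha)\le1$, so Young's convolution inequality together with Cauchy--Schwarz turns the per-mode estimate into the claimed bound after summation in $p$ and integration in $t$; the last clause for $f\in C([0,T];H^r)$ follows by reading $\partial_t^\alpha w=f-A^\beta w$ off the equation.

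For the quantitative estimate in part (b) I would split
\[
w_{\alpha',\beta',f'}-w_{\alpha,\beta,f}=\big(w_{\alpha',\beta',f'}-w_{\alpha',\beta',f}\big)+\big(w_{\alpha',\beta',f}-w_{\alpha,\beta,f}\big).
\]
The first (data) term is linear in $f'-f$, so by the uniform maximal--regularity bound of part (a) it is controlled in $H^{\beta+r}\hookrightarrow H^{\rho+r}$ by $C\Vert f'-f\Vert_{L^2(0,T;H^r)}$, which is the second summand in the asserted inequality. For the second (parameter) term I keep $f$ fixed and apply Lemma \ref{mittag-leffler-lemma}(c) mode by mode with $\lambda=-\lambda_p^\beta$ and $\lambda'=-\lambda_p^{\beta'}$, which yields
\[
\Vert G_{\alpha',\beta',f,p}-G_{\alpha,\beta,f,p}\Vert_{L^2(0,T)}\le C\big[\,|\alpha'-\alpha|(1+\lambda_p^\beta)+|\lambda_p^{\beta'}-\lambda_p^\beta|\,\big]\,\Vert f_p\Vert_{L^2(0,T)}.
\]
Using $\lambda_p\ge\lambda_1>0$ and the elementary bound $|\lambda_p^{\beta'}-\lambda_p^\beta|\le C\lambda_p^{\beta_1}|\ln\lambda_p|\,|\beta'-\beta|\le C_\delta\lambda_p^{\beta_1+\delta}|\beta'-\beta|$, which absorbs the logarithm into $\delta$, this becomes $\le C_\delta\lambda_p^{\beta_1+\delta}\,\varepsilon\,\Vert f_p\Vert_{L^2(0,T)}$ with $\varepsilon=|\alpha'-\alpha|+|\beta'-\beta|$.

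The exponent $2(\beta_0-\rho)\mu$ then comes from a truncation--balancing argument of exactly the type used in Theorem \ref{homogeneous-fractional-diffusion-theorem}(b)(ii). Summing $\lambda_p^{2(\rho+r)}\Vert\cdot\Vert^2$ and splitting at an index $N$, the head $p\le N$ is bounded by $C_\delta\lambda_N^{2(\rho+\beta_1+\delta)}\varepsilon^2\Vert f\Vert_{L^2(0,T;H^r)}^2$, while the tail $p>N$ is bounded, using the part-(a) bound $\sum_p\lambda_p^{2(\beta+r)}\Vert G_{\alpha,\beta,f,p}\Vert_{L^2(0,T)}^2\le C\Vert f\Vert^2$ together with $\rho<\beta_0\le\beta$, by $C\lambda_{N+1}^{-2(\beta_0-\rho)}\Vert f\Vert^2$. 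Choosing $N$ with $\lambda_{N+1}^{-1}\le\varepsilon^{\mu}\le\lambda_N^{-1}$ and $\mu=(\beta_0+\beta_1+\delta)^{-1}$, the identity $\mu(\beta_0+\beta_1+\delta)=1$ forces both pieces to equal $C_\delta\varepsilon^{2(\beta_0-\rho)\mu}$, which is the claim.

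Finally, the qualitative convergence in $L^2(0,T;H^{\beta_0+r})$ is the delicate point, since it sits at the borderline $\rho=\beta_0$ excluded from the quantitative estimate and the spectral gain degenerates when $\beta=\beta_0$. I would avoid a dominated--convergence argument, whose dominating sequence is not uniform in $(\alpha',\beta',f')$, and instead use density: approximating $f$ by finite--mode data $f^{(m)}=\sum_{p\le m}f_p\phi_p$, the uniform part-(a) bound gives $\Vert w_{\alpha',\beta',f}-w_{\alpha',\beta',f^{(m)}}\Vert_{L^2(0,T;H^{\beta_0+r})}\le C\Vert f-f^{(m)}\Vert_{L^2(0,T;H^r)}$ uniformly in $(\alpha',\beta')$, and likewise at the limit point. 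For finite--mode data the difference is a finite sum, each term tending to $0$ by the continuity in $(\alpha,\beta)$ furnished by Lemma \ref{mittag-leffler-lemma}(c) together with $f'_p\to f_p$ in $L^2(0,T)$, so a standard $3\varepsilon$ argument closes the proof. I expect this borderline convergence, rather than the quantitative estimate, to be the main obstacle; the latter is essentially a mechanical combination of Lemma \ref{mittag-leffler-lemma}(c) with the truncation scheme already developed for the homogeneous case.
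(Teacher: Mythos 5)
Your proposal is correct and follows essentially the same route as the paper's proof: the Galerkin/spectral reduction to the modes $G_{\alpha,\beta,f,p}$, the per-mode Cauchy--Schwarz estimate via the identity $\lambda_p^\beta\tau^{\alpha-1}E_{\alpha,\alpha}(-\lambda_p^\beta\tau^\alpha)=-\tfrac{d}{d\tau}E_{\alpha,1}(-\lambda_p^\beta\tau^\alpha)$, the same two-term splitting in (b), Lemma \ref{mittag-leffler-lemma}(c) applied mode by mode, and the identical truncation at $N$ with $\lambda_N\le\varepsilon^{-\mu}\le\lambda_{N+1}$ and $\mu=(\beta_0+\beta_1+\delta)^{-1}$. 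The only cosmetic differences are that you absorb the logarithm into $\lambda_p^\delta$ up front rather than at the end, and you flesh out the borderline case $\rho=\beta_0$ (which the paper dismisses as a direct verification) with a density/$3\varepsilon$ argument that amounts to the same head--tail truncation.
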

\begin{proof}
 Put
	$V_n=span\{\phi_1,...,\phi_n\}$ as in the proof of the previous theorem. We  consider the problem of finding $w_n\in
	C([0,T];V_n)$ such that
	$$  \frac{d}{dt}\langle J^{1-\alpha}w_n,v\rangle+a_\beta(w_n,v)=\langle f(t),v\rangle,\ \ \ \
	\forall v\in V_n. $$
	By putting $w_n(t)=w_{\alpha,\beta,f,n}(t):=\sum_{p=1}^n d_{np}(t)\phi_p$ and choosing $v=\phi_k$, $k=\overline{1,n}$ we will obtain the fractional differential equation
	$$ \frac{d}{dt} J^{1-\alpha} d_{nk}+\lambda_k^\beta d_{nk}
	=\langle f(t),\phi_k\rangle   $$
	which gives
	$$   d_{nk}(t)=G_{\alpha,\beta,f,k}(t).$$
Since $d_{nk}$ is independent of $n$, we will denote $d_{nk}=d_k$.	
We denote
	\begin{align*}
		w_n(t):=w_{\alpha,\beta,f, n}(t) &=\sum_{p=1}^n G_{\alpha,\beta,f,p}(t)\phi_p,\\
                h_n(t):=h_{\alpha,\beta,f, n}(t) &=\sum_{p=1}^n (-\lambda_p^\beta d_{p}(t)+ \langle f(t),\phi_p\rangle)\phi_p.
	\end{align*}
{\bf Proof of (a):}
We estimate the Fourier coefficients of the function $w_n$. We first have
	\begin{eqnarray*}
		|G_{\alpha,\beta,f,p}(t)|^2&\leq&  \int_0^t|\langle f(\tau),\phi_p\rangle|^2d\tau
		\left(\int_0^t
		\tau^{\alpha-1}E_{\alpha,\alpha}(-\lambda_p^\beta \tau^\alpha)d\tau\right)^2\\
		&=& \frac{1}{\lambda_p^{2\beta}}\int_0^t|\langle f(\tau),\phi_p\rangle|^2d\tau
		\left(\int_0^t \frac{d}{d\tau}E_{\alpha,1}(-\lambda_p^\beta \tau^\alpha)d\tau\right)^2\\
		&=& \frac{\left(1-E_{\alpha,1}(-\lambda_p^\beta t^\alpha)\right)^2}{\lambda_p^{2\beta}}\int_0^t|\langle f(\tau),\phi_p\rangle|^2d\tau \\
		&\leq& \frac{1}{\lambda_p^{2\beta}}\int_0^T|\langle f(\tau),\phi_p\rangle|^2d\tau.
	\end{eqnarray*}
	Hence, we have
	\begin{eqnarray*}
		 \lambda_p^{2(\beta+r)}|G_{\alpha,\beta,f,p}(t)|^2
		&\leq & \lambda_p^{2 r}\int_0^T|\langle f(\tau),\phi_p\rangle|^2d\tau\\
\sum_{p=1}^\infty \lambda_p^{2(\beta+r)}|G_{\alpha,\beta,f,p}(t)|^2
		&\leq &\sum_{p=1}^\infty \lambda_p^{2r}\int_0^T|\langle f(\tau),\phi_p\rangle|^2d\tau=C\Vert f\Vert^2_{L^2(0,T;H^r)}.
	\end{eqnarray*}
This implies that $\{w_n\}$ is uniformly convergent to the function
$$		w(t):=w_{\alpha,\beta,f}(t) =\sum_{p=1}^\infty G_{\alpha,\beta,f,p}(t)\phi_p$$
in $C([0,T];H^{\beta+r})$ and
$$\Vert w(t)\Vert^2_{H^{\beta+ r}}=\sum_{p=1}^\infty \lambda_p^{2(\beta+ r)}|G_{\alpha,\beta,f,p}(t)|^2
		\leq \sum_{p=1}^\infty \lambda_p^{2 r}\int_0^T|\langle f(\tau),\phi_p\rangle|^2d\tau=C\Vert f\Vert^2_{L^2(0,T;H^r)}.$$
We can verify as in Theorem
	\ref{Hilbert-case} that $w$ is a solution of the problem (\ref{nonhomogeneous-fractional-diffusion}).
Now, we have
\begin{align*}
\lambda_p^{2r}|\lambda_p^\beta d_{p}(t)+ \langle f(t),\phi_p\rangle|^2 &\leq
 2\lambda_p^{2(\beta+r)}|G_{\alpha,\beta,f,p}(t)|^2) +2\lambda_p^{2r}|\langle f(t),\phi_p\rangle|^2,
\end{align*}
and
\begin{align*}
\int_0^T \sum_{p=1}^\infty \left\{
 2\lambda_p^{2(\beta+r)}|G_{\alpha,\beta,f,p}(t)|^2) +
 2\lambda_p^{2r}|\langle f(t),\phi_p\rangle|^2
\right\} dt
\leq C\Vert f\Vert^2_{L^2(0,T;H^r)}.
\end{align*}
Hence, $h_n$ converge uniformly to a function $h$ in $L^2(0,T;H^r)$ and
$$ \Vert h\Vert^2_{L^2(0,T;H^r)}\leq C\Vert f\Vert^2_{L^2(0,T;H^r)}. $$
  Moreover,
since $h_n=\partial^\alpha_t w_n$,
we can prove as in the proof of the previous theorem that $h=\partial^\alpha_t u.$
Combining the estimates for $w,\partial_t^\alpha w$ gives
	\begin{align*}
		\Vert w\Vert_{C([0,T];H^{\beta+r})}+
 \Vert \partial_t^\alpha w\Vert_{L^2(0,T;H^r)}&\leq
C\Vert f\Vert_{L^2(0,T;H^r)}).
	\end{align*}
Now, if $f\in C([0,T]; H^r)$ then
\begin{align*}
\sum_{p=1}^\infty \left\{
 2\lambda_p^{2(\beta+r)}|G_{\alpha,\beta,f,p}(t)|^2) +
 2\lambda_p^{2r}|\langle f(t),\phi_p\rangle|^2
\right\}
\leq C\Vert f\Vert^2_{C([0,T];H^r)}.
\end{align*}
Hence $h_n$ converges uniformly to the function $\partial^\alpha_tw$ in $C([0,T];H^r)$ and
$$  \Vert \partial^\alpha_tw\Vert^2_{C([0,T];H^r)}\leq C\Vert f\Vert^2_{C([0,T];H^r)}. $$
 {\bf Proof of (b):}   Finally, for $0<\rho\leq\beta_0$ we estimate
\begin{align*}
\Vert w_{\alpha',\beta',f'}-w_{\alpha,\beta,f}\Vert^2_{L^2(0,T;H^{\rho+r})} &=
\sum_{p=1}^\infty \lambda_p^{\rho+r}\left|G_{\alpha',\beta',f',p}(t)-G_{\alpha,\beta,f,p}(t)\right|^2\\
&\leq 2\sum_{p=1}^\infty \lambda_p^{\rho+r}\left|G_{\alpha',\beta',f',p}(t)-G_{\alpha',\beta',f,p}(t)\right|^2\\
&+2\sum_{p=1}^\infty \lambda_p^{\rho+r}\left|G_{\alpha',\beta',f,p}(t)-G_{\alpha,\beta,f,p}(t)\right|^2.
\end{align*}
For the first term, we have
\begin{align*}
\sum_{p=1}^\infty \lambda_p^{\rho+r}\left|G_{\alpha',\beta',f',p}(t)-G_{\alpha',\beta',f,p}(t)\right|^2
&=2\sum_{p=1}^\infty \lambda_p^{\rho+r}\left|G_{\alpha',\beta',f'-f,p}(t)\right|^2\\
&\leq C\sum_{p=1}^\infty \lambda_p^{\rho-\beta'}\lambda_p^r|\langle f'-f,\phi_p\rangle|^2\leq C\Vert f'-f\Vert^2_{L^2(0,T;H^{r})}.
\end{align*}
We consider the second term. Choose an $N\in\mathbb{N}$. From Lemma \ref{mittag-leffler-lemma} part (c) we obtain
\begin{align*}
\Vert w_{\alpha',\beta',f}-w_{\alpha,\beta,f}\Vert^2_{L^2(0,T;H^{\rho+r})} &\leq
C\sum_{p=1}^N \lambda_p^{2(\rho+r)}\Big[|\alpha-\alpha'|(1+|\lambda_p|^\beta)+|\lambda_p^{\beta'}-\lambda_p^\beta)|\Big]^2 \int_0^T| \langle f(.),\phi_p\rangle|^2dt\\
&+ 2\sum_{p=N+1}^\infty \lambda_p^{2(\rho+r)}\int_0^T\left\{G^2_{\alpha',\beta',f',p}(t)+G^2_{\alpha,\beta,f,p}(t)\right\}dt\\
&\leq C\sum_{p=1}^N \lambda_p^{2(\rho+r)}\Big[|\alpha-\alpha'|(1+|\lambda_p|^\beta)+|(\beta-\beta')\lambda_p^{\beta_1}\ln\lambda_p|\Big]^2 \int_0^T|\langle f(t),\phi_p\rangle|^2dt\\
&+C\sum_{p=N+1}^\infty \lambda_p^{2(\rho+r)}\left\{\frac{1}{\lambda_p^{2\beta'}}+\frac{1}{\lambda_p^{2\beta}}\right\}\int_0^T|\langle f(t),\phi_p\rangle|^2dt\\
&\leq C\lambda_N^{2(\rho+\beta_1)}|\ln\lambda_N|^2(|\alpha-\alpha'|+|\beta-\beta'|)^2\Vert f\Vert^2_{L^2(0,T;H^r)}\\
&+\lambda_{N+1}^{2(\rho-\beta_0)}\sum_{p=N+1}^\infty \int_0^T\lambda_p^r|\langle f(t),\phi_p\rangle|^2dt.
\end{align*}
Now, for $\rho=\beta_0$, we can verify directly that
$$ w_{\alpha',\beta,f'}\rightarrow w_{\alpha,\beta,f}\ \ \ \ {\rm in}\ L^2(0,T;H^{\beta_0+r})$$
as $(\alpha',\beta',f')\to (\alpha,\beta,f)$ in $\mathbb{R}\times\mathbb{R}\times L^2(0,T;H^r)$.
For $0\leq \rho<\beta_0$, we choose $N$ such that
$$ \lambda_N\leq (|\alpha-\alpha'|+|\beta-\beta'|)^{-\mu}\leq \lambda_{N+1}$$
where $\mu$ satisfies $1-\mu(\rho+\beta_1+\delta)=\mu(\beta_0-\rho)$, i.e., $\mu=(\beta_0+\beta_1+\delta)^{-1}$.
Then we have
\begin{align*}
  \Vert w_{\alpha',\beta',f}-w_{\alpha,\beta,f}\Vert^2_{L^2(0,T;H^{\rho+r})}
&\leq C_\delta  (|\alpha-\alpha'|+|\beta-\beta'|)^{2-2\mu(\rho+\beta_1+\delta)}\Vert f\Vert^2_{L^2(0,T;H^r)}+\\
&C (|\alpha-\alpha'|+|\beta-\beta'|)^{2(\beta_0-\rho)\mu}\Vert f\Vert^2_{L^2(0,T;H^r)}\\
&\leq (C_\delta+C)(|\alpha-\alpha'|+|\beta-\beta'|)^{2(\beta_0-\rho)\mu}\Vert f\Vert^2_{L^2(0,T;H^r)}.
 \end{align*}
It follows that
$$ \Vert w_{\alpha',\beta',f'}-w_{\alpha,\beta,f}\Vert^2_{L^2(0,T;H^{\rho+r})}\leq C_\delta
(|\alpha-\alpha'|+|\beta-\beta'|)^{2(\beta_0-\rho)\mu} \Vert f\Vert^2_{L^2(0,T;H^r)}+
C\Vert f'-f\Vert^2_{L^2(0,T;H^{r})}.$$

\end{proof}

\section{Instability of the solutions of some Ill-posed problems}

\setcounter{equation}{0}

In this section, we will give some definitions and examples for showing the instability of solutions
in the case of  the fractional order is noised. First, we introduce general theory of stabiliy and instability of inverse problems which depended on the (noise) fractional order. Next, we present some examples for this general theory.

\subsection{General theory}
Let $X,Y$ be two Banach spaces, $K: X\to Y$. In literature of inverse problems, we often use  Hadamard's definition of ill-posedness for the problem $Kx=y$.  Now,  we first  consider the instability of inverse problems which depended on the (noise) fractional order $\alpha$. To make the situation clear, we  develop a concept of instability for a family of operator $K_\beta$ upon the generic parametric $\beta$ and discuss some examples.
Let $\beta_0\in \mathbb{R}$, $x_0, u_0\in X$, $y_0, v_0\in Y$ and $[a,b]$ is an interval in $\mathbb{R}$
such that $\beta_0\in [a,b]$. We consider the family of (linear or nonlinear) operators $K_\beta: X\to Y$ where
$\beta\in [a,b]$. Assume that $K_{\beta_0}x_0=y_0$, $K u_0=v_0$.
\begin{definition}
	We say that
	\begin{enumerate}[\bf \upshape (a)]
		\item the operator $K$ has an unstable inverse at $u_0$ if there exist
		sequences $(u_n)\subset X$ such that
		$ Ku_n\to v_0$ but $u_n\not\to u_0$ as $n\to\infty$.
		\item the family $(K_\beta)$ has a $\beta_0$-unstable inverse at $x_0$ if there exist
		sequences $(\beta_n)$ $\subset (a,b)$, $(x_n)\subset X$ such that
		$\beta_n\to\beta_0$, $ K_{\beta_n}x_n\to y_0$ but $x_n\not\to x_0$ as $n\to\infty$.
		\item the family $(K_\beta)$ has a properly $\beta_0$-unstable inverse at $x_0$ if there exist
		sequences $(\beta_n)$ $\subset (a,b)$, $(x_n)\subset X$, $(y_n)\subset Y$ such that $K_{\beta_n}x_n=y_n$, $K_{\beta_0}x^*_n=y_n$ and
		$$\beta_n\to\beta_0,
		y_n\to y_0  , x^*_n\to x_0\ {\rm but} \ x_n\not\to x_0\ {\rm as}\ n\to\infty.$$
		\item the operator $R_{\delta}: Y\to X$ is a regularization at $\beta_0$ of the family $(K_\beta)$  if
		$$   \lim_{(\delta,\beta)\to(0,\beta_0)}R_{\delta}K_\beta x=x\ \ \ {\rm for}\ x\in X. $$
	\end{enumerate}
\end{definition}
We have
\begin{theorem}
	With the notations in the  definition above, we obtain the following  results.
	\begin{enumerate}[\bf \upshape (a)]
		\item If $K_\beta x\to K_{\beta_0} x$ as $\beta\to\beta_0$ for every $x\in X$ and $K_{\beta_0}$ has an unstable inverse at $x_0$ then
		$(K_\beta)$ has a $\beta_0$-unstable inverse at $x_0$.
		\item If $sup_{\Vert x\Vert_X\leq M}\Vert K_\beta x-K_{\beta_0}x\Vert\to 0$ as $\beta\to\beta_0$ and $(K_\beta)$ has the $\beta_0$-unstable inverse at $x_0$ with $\sup_{n}\Vert x_n\Vert_X<\infty$ then $K_{\beta_0}$ has the unstable inverse at $x_0$.
		\item If $K_\beta$ is bounded linear and that $\lim_{\beta\to\beta_0}\Vert K_{\beta}-K_{\beta_0}\Vert=0$
		and $(K_\beta)$ has the $\beta_0$-unstable inverse at $x_0$ then $K_{\beta_0}$ has the unstable inverse at $x_0$.
	\end{enumerate}
\end{theorem}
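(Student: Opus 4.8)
The plan is to derive all three parts from one algebraic splitting: for any admissible parameter/point pair one writes
\[
K_{\beta_0}x_n-y_0=\big(K_{\beta_0}x_n-K_{\beta_n}x_n\big)+\big(K_{\beta_n}x_n-y_0\big),
\]
so that the reconstruction error $K_{\beta_n}x_n-y_0$ (controlled by the definition of the unstable inverse) is isolated from the parameter-perturbation term $K_{\beta_0}x_n-K_{\beta_n}x_n$ (controlled by the various convergence hypotheses on $K_\beta\to K_{\beta_0}$). Part (a) runs this identity forward, manufacturing a $\beta_0$-instability out of the instability of the single operator $K_{\beta_0}$, while parts (b) and (c) run it backward.

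For part (a) I would start from the sequence $(u_n)\subset X$ witnessing that $K_{\beta_0}$ has an unstable inverse at $x_0$, namely $K_{\beta_0}u_n\to y_0$ while $u_n\not\to x_0$. Using only the pointwise convergence $K_\beta u_n\to K_{\beta_0}u_n$ for each fixed $u_n$ as $\beta\to\beta_0$, I would make a diagonal selection: for each $n$ choose $\beta_n\in(a,b)$ with $|\beta_n-\beta_0|\le 1/n$ and $\|K_{\beta_n}u_n-K_{\beta_0}u_n\|\le 1/n$. Setting $x_n:=u_n$, the triangle inequality gives $\|K_{\beta_n}x_n-y_0\|\le 1/n+\|K_{\beta_0}u_n-y_0\|\to 0$, while $\beta_n\to\beta_0$ and $x_n=u_n\not\to x_0$, which is exactly the definition of $(K_\beta)$ having a $\beta_0$-unstable inverse at $x_0$.

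For part (b) I would take the sequences $(\beta_n),(x_n)$ witnessing the $\beta_0$-unstable inverse, so $K_{\beta_n}x_n\to y_0$, $x_n\not\to x_0$, and additionally $M:=\sup_n\|x_n\|_X<\infty$. The perturbation term is then estimated by the uniform-on-bounded-sets hypothesis, $\|K_{\beta_0}x_n-K_{\beta_n}x_n\|\le\sup_{\|x\|\le M}\|K_{\beta_0}x-K_{\beta_n}x\|\to 0$, and the splitting yields $K_{\beta_0}x_n\to y_0$ with $x_n\not\to x_0$, i.e.\ an unstable inverse of $K_{\beta_0}$ at $x_0$. Part (c) is the same backward argument in the bounded linear setting, where the perturbation term is bounded by $\|K_{\beta_0}-K_{\beta_n}\|\,\|x_n\|$.

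The main obstacle is precisely part (c), because there $\sup_n\|x_n\|<\infty$ is \emph{not} assumed, so the factor $\|x_n\|$ in $\|K_{\beta_0}-K_{\beta_n}\|\,\|x_n\|$ need not stay bounded. The remedy is to exploit linearity and renormalize. Put $e_n:=x_n-x_0$; then $e_n\not\to 0$, and since $\|K_{\beta_n}-K_{\beta_0}\|\to 0$ forces $K_{\beta_n}x_0\to K_{\beta_0}x_0=y_0$, subtracting gives $K_{\beta_n}e_n\to 0$. Passing to a subsequence with $\|e_n\|\ge\varepsilon_0$ and dividing by $c_n:=\max\{1,\|e_n\|\}$, the rescaled vectors $\hat e_n:=e_n/c_n$ satisfy $\|\hat e_n\|\le 1$, $\|\hat e_n\|\ge\min\{\varepsilon_0,1\}>0$ (so $\hat e_n\not\to 0$), and $K_{\beta_n}\hat e_n\to 0$. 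The bounded estimate now applies, $\|K_{\beta_0}\hat e_n\|\le\|K_{\beta_0}-K_{\beta_n}\|+\|K_{\beta_n}\hat e_n\|\to 0$, and $x_0+\hat e_n$ is the desired instability sequence for $K_{\beta_0}$ at $x_0$. I expect checking that this renormalization simultaneously preserves the instability ($\hat e_n\not\to 0$) and restores bounded perturbation control to be the only genuinely delicate point; everything else is triangle-inequality bookkeeping.
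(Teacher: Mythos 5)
Your proof is correct and follows essentially the same route as the paper's: the identical diagonal selection in (a), the identical triangle-inequality argument in (b), and a renormalization argument in (c). The only difference is in (c), where the paper first splits into the cases $\sup_n\Vert x_n\Vert_X<\infty$ (reducing to part (b)) and $\sup_n\Vert x_n\Vert_X=\infty$ (normalizing $z_k=x_{n_k}/\Vert x_{n_k}\Vert_X$ and using $\Vert K_{\beta_{n_k}}z_k\Vert=\Vert y_{n_k}\Vert/\Vert x_{n_k}\Vert_X\to 0$), whereas your single normalization of $e_n=x_n-x_0$ by $\max\{1,\Vert e_n\Vert_X\}$ handles both cases at once and is, if anything, slightly cleaner.
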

\begin{proof}
	{\bf Proof of (a):} Assume that $K_{\beta_0}(u_n)=v_n$, $v_n\to v_0$ in $Y$, $u_n\not\to u_0$ in $X$. For each
	$n\in\mathbb{N}$, we choose $\beta_n\in (a,b)$ such that
	$$|\beta_n-\beta_0| +\Vert K_{\beta_n}u_n-K_{\beta_0}u_n\Vert\leq \frac{1}{n}\ {\rm as}\  n\to\infty.$$
	Put $y_n=K_{\beta_n}u_n, x_n=u_n$ we obtain that $(K_\beta)$ has the $\beta_0$-unstable inverse at $x_0$.\\
	{\bf Proof of (b):} Assume that $K_{\beta_n}x_n=y_n$, $y_n\to y_0$, $x_n\not\to x_0$ and $\sup_n\Vert x_n\Vert_X\leq M$.
	We have
	$$  \lim_{n\to\infty}\Vert K_{\beta_0}x_n-K_{\beta_n}x_n\Vert_Y=0.$$
	It follows that
	$K_{\beta_0}x_n\to y_0$ and $x_n\not\to x_0$ as $n\to\infty$. Hence $K_{\beta_0}$ has the unstable inverse at
	$x_0$.\\
	{\bf Proof of (c):} Assume that $K_{\beta_n}x_n=y_n$, $y_n\to y_0$, $x_n\not\to x_0$. If  $\sup_n\Vert x_n\Vert_X\leq M$ we can use (b) to prove the result. If $\sup_n\Vert x_n\Vert_X=\infty$, we can choose a subsequence
	$x_{n_k}$ such that $\lim_{k\to\infty}\Vert x_{n_k}\Vert_X=\infty$. Putting $\tilde{x}_k=x_0+z_k$ with
	$z_k=\frac{x_{n_k}}{\Vert x_{n_k}\Vert_X}$ we have $\Vert\tilde{x}_k-x_0\Vert_X=\Vert z_k\Vert_X=1$,
	$$\lim_{k\to\infty}\Vert K_{\beta_{n_k}}z_k-K_{\beta_0}z_k\Vert_Y=0,\ \lim_{k\to\infty}
	\Vert K_{\beta_{n_k}}z_k\Vert_Y=\lim_{k\to\infty}\Vert y_{n_k}\Vert_Y/\Vert x_{n_k}\Vert_X=0.$$
	Hence $\lim_{k\to\infty}K_{\beta_0}z_k=0$ and
	$$  K_{\beta_0}\tilde{x}_k=y_0+ K_{\beta_0}z_k\to y_0\ \ \ \ {\rm as}\ k\to\infty. $$
	It follows that $K_{\beta_0}$ has the unstable inverse at $x_0$.
\end{proof}
\subsection{Some examples for instability of fractional order}
\subsubsection{The first example}
In this subsection, we present an example to show that the $\beta_0$-unstable inverse cannot imply the unstable inverse
of $K_{\beta_0}$. For $0<\alpha<1$, we consider the Abel operators $J^\alpha_{-\infty}: L^2(\mathbb{R})
\to L^2(\mathbb{R})$ defined by
$$ J^\alpha_{-\infty}u(x)=\frac{1}{\Gamma(\alpha)}\int_{-\infty}^x (x-t)^{\alpha-1}u(t)dt. $$
From \cite[page 96]{Gorenflo-Vessella}, we have $\lim_{\alpha\to 0}J^\alpha_{-\infty} u = u$ for every
$u\in H^1(\mathbb{R})$. So we can define $J^0_{-\infty}=Id$ and $(J^0_{-\infty})^{-1}=Id$ is continuous.
Now, we prove that the family $(J^\alpha_{-\infty})$ has the 0-unstable inverse.
If $J^\alpha_{-\infty}u(x)=f(x)$ then $\hat{u}(\tau)=(i\tau)^\alpha \hat{f}(\tau)$. For $a_n,\delta_n>0$, we put
$$ f_n(x)=\frac{1}{2\pi}\left(\int_{a_n}^{a_n+\delta_n}e^{itx}dt+\int_{-a_n-\delta_n}^{-a_n}e^{itx}dt\right). $$
We have $\hat{f}_n=\chi_{(a_n,a_n+\delta_n)}+\chi_{(-a_n-\delta_n,-a_n)}$. It follows that $\Vert f\Vert^2=\frac{1}{2\pi}\Vert \hat{f}_n\Vert^2=\frac{\delta_n}{\pi}$.
On the other hand, we have $J^{\alpha_n}_{-\infty}u_n(x)=f_n(x)$ for $\hat{u}_n(\tau)=(i\tau)^{\alpha_n}
\left(\chi_{(a_n,a_n+\delta_n)}+\chi_{(-a_n-\delta_n,-a_n)}\right)$. So we have
\begin{equation*}
	\Vert u_n\Vert^2=\frac{1}{2\pi}\Vert \hat{u}_n\Vert^2=\frac{1}{\pi}\int_{a_n}^{a_n+\delta_n}|\tau|^{2\alpha_n}d\tau \geq \frac{1}{\pi}a_n^{2\alpha_n}\delta_n.
\end{equation*}
Now, if we choose $a_n=n^n, \alpha_n=\delta_n=\frac{1}{n}$ then $J^{\alpha_n}_{-\infty}u_n=f_n$ with
$\alpha_n\to 0$, $f_n\to 0$ but $u_n\not\to 0$. Hence $(J^{\alpha}_{-\infty})$ has the 0-unstable inverse.

\subsubsection{The second  example}
We consider the problem of finding $u_f \in L^2(\mathbb{R}) $ from the given exact data $f \in L^2(\mathbb{R})$ such that
\begin{align*}
	\hat{u}_{\al,f}(\tau)= e^{a \tau^\alpha} \hat{f}(\tau)
\end{align*}
where $a>0$ is constant.

First, we consider the  given data $\alpha >0$ and $f_0=0$. Then  $\hat{u}_{\al,{f_0}}(\tau)=0$.
Assume that $ (\alpha, f_0)$ is noised by $(\alpha+\ep_n, f_n)$
where $f_n \in L^2(\mathbb{R})$ such that  $\hat{f}_n=n\chi_{(n^n, n^n+\frac{1}{n^3})} $ and $\epsilon_n =\frac{1}{n}$.
Since the equality
\begin{align*}
	\|f_n\|_{L^2(\mathbb{R}) }= \|\hat{f}_n\|_{L^2(\mathbb{R}) }= \int_{n^n}^{n^n+\frac{1}{n^3}} n^2 d\tau=\frac{1}{n}
\end{align*}
we know that
\begin{align*}
	|\alpha+\ep_n -\alpha|+ \|f_n-f_0\|_{L^2(\mathbb{R}) } =\frac{2}{n} \to 0,~~ n\to \infty.
\end{align*}
And we have
\begin{align*}
	\hat{u}_{\alpha+\epsilon_n,f_n}(\tau)= e^{a \tau^{\alpha+\epsilon_n}} \hat{f_n}(\tau) =n\chi_{(n^n, n^n+\frac{1}{n^3})} e^{a \tau^{\alpha+\epsilon_n}}
\end{align*}
The norm of $u_{\alpha+\epsilon_n,f_n}$ in $ L^2 $ is estimated  as follows
\begin{align*}
	\Big\|\hat{u}_{\alpha+\epsilon_n,f_n} -u_{\alpha,f_0} \Big\|_{L^2(\mathbb{R}) }&= \|\hat{ u }_{\alpha+\epsilon_n,f_n}\|_{L^2(\mathbb{R}) }\nn\\
	&= \int_{n^n}^{n^n+\frac{1}{n^3}}  n^2  e^{2a \tau^{\alpha+\epsilon_n}}  \chi_{(n^n, n^n+\frac{1}{n^3})}  d\tau\ge \frac{e^{2an}}{n} \to +\infty,~n \to +\infty
\end{align*}

\end{document}